\newtheorem{theorem}{Theorem}[section]
\newtheorem{lemma}[theorem]{Lemma}
\newtheorem{corollary}[theorem]{Corollary}
\newtheorem{remark}[theorem]{Remark}
\newtheorem{proposition}[theorem]{Proposition}
\numberwithin{equation}{section}
\newcommand{\lbl}[1]{\label{#1}}
\newcommand{\be}{\begin{equation}}
\newcommand{\ee}{\end{equation}}
\newcommand\bes{\begin{eqnarray}} \newcommand\ees{\end{eqnarray}}
\newcommand{\bess}{\begin{eqnarray*}}
\newcommand{\eess}{\end{eqnarray*}}
\newcommand{\bbbb}{\left\{\begin{aligned}}
\newcommand{\nnnn}{\end{aligned}\right.}
\newcommand{\bea}{\begin{align*}}
\newcommand{\eea}{\end{align*}}
\newcommand\kk{\left}
\newcommand\rr{\right}
\newcommand\dd{\displaystyle}
\newcommand\df{\dd\frac}
\newcommand\yy{\infty}
\newcommand\qq{\eqref}
\newcommand\R{\mathbb{R}}
\newcommand\ol{\overline}
\newcommand\sk{\smallskip}
\newcommand\ri{\rightarrow}
\newcommand\mk{\medskip}
\begin{document}
\setlength{\baselineskip}{16pt} \pagestyle{myheadings}

\begin{center}{\Large\bf Free boundary problems with nonlocal and local diffusions}\\[2mm]
{\Large\bf II: Spreading-vanishing and long-time behavior\footnote{This work was supported by NSFC Grants 11771110, 11971128}}\\[4mm]
  {\Large  Jianping Wang, \ Mingxin Wang\footnote{Corresponding author. {\sl E-mail}: mxwang@hit.edu.cn}}\\[1.5mm]
School of Mathematics, Harbin Institute of Technology, Harbin 150001, PR China
\end{center}

\date{\today}

\begin{abstract}This is part II of our study on the free boundary problems with nonlocal and local diffusions. In part I, we obtained the existence, uniqueness, regularity and estimates of global solution. In part II here, we show a spreading-vanishing dichotomy, and provide the criteria of spreading and vanishing, as well as the long time behavior of solution when spreading happens.

\textbf{Keywords}: Nonlocal-local diffusions; Free boundaries;  Spreading-vanishing; Long-time behavior.

\textbf{AMS Subject Classification (2010)}: 35K57, 35R09, 35R20, 35R35, 92D25

\end{abstract}

\section{Introduction}
\setcounter{equation}{0} {\setlength\arraycolsep{2pt}
\markboth{J. P. Wang and M. X. Wang}{Free boundary problems with nonlocal and local diffusions}

We continue our investigation in \cite{WW-I} on the free boundary problems with nonlocal and local diffusions
 \bes
\left\{\begin{aligned}
&u_t=d_1\int_{g(t)}^{h(t)}\!J(x-y)u(t,y){\rm d}y-d_1u+f_1(u,v), &&t>0,~g(t)<x<h(t),\\
&v_t=d_2 v_{xx}+f_2(u,v), &&t>0,~g(t)<x<h(t),\\
&u(t,g(t))=u(t,h(t))=v(t,g(t))=v(t,h(t))=0, &&t\ge 0,\\
&h'(t)=-\mu v_x(t,h(t))+\rho\int_{g(t)}^{h(t)}\!\!\int_{h(t)}^\infty\! J(x-y)u(t,x){\rm d}y{\rm d}x, &&t\ge 0,\\[1mm]
&g'(t)=-\mu v_x(t,g(t))-\rho\int_{g(t)}^{h(t)}\!\!\int_{-\infty}^{g(t)}\! J(x-y)u(t,x){\rm d}y{\rm d}x, &&t\ge 0,\\
&u(0,x)=u_0(x)>0, \ v(0,x)=v_0(x)>0,\  &&-h_0<h<h_0,\\
&h(0)=-g(0)=h_0>0,
\end{aligned}\right.
 \label{1.1}
 \ees
where $[-h_0,h_0]$ represents the initial population range of the species $u$ and $v$; $x=g(t)$ and $x=h(t)$ are the free boundaries to be determined together with $u(t,x)$ and $v(t,x)$, which are always assumed to be identically $0$ for $x\in \mathbb{R}\setminus [g(t), h(t)]$; $d_i$ and $\mu,\rho$ are positive constants. The kernel function $J: \mathbb{R}\rightarrow\mathbb{R}$ is continuous and satisfies
 \begin{enumerate}[leftmargin=3em]
\item[\bf(J1)]  $J\in C^{1-}(\R)$, $J(0)>0,~J(x)\ge 0, \ \dd\int_{\mathbb{R}}J(x){\rm d}x=1$, \ $J$\, is\, symmetric, \ and\ $\dd\sup_{\mathbb{R}}J<\infty$,
 \end{enumerate}
where $J\in C^{1-}(\R)$ means that $J$ is Lipschitz continuous in $\mathbb{R}$. Reaction terms $f_1,f_2$ are either classical competition model:
\bes
 f_1(u,v)=u(a-u-bv), \ \ f_2(u,v)=v(1-v-cu),
  \lbl{1a.2}\ees
or classical prey-predator model:
 \bes
 f_1(u,v)=u(a-u-bv), \ \ f_2(u,v)=v(1-v+cu),
  \lbl{1a.3}\ees

It follows from {\bf(J)} that there exist constants $\bar\varepsilon\in(0,h_0/4)$ and $\delta_0>0$ such that
 \bes
J(x,y)>\delta_0\ \ \ {\rm if}\ \ |x-y|<\bar\varepsilon.
 \label{1.5}\ees

Denote by $C^{1-}(\Omega)$ the Lipschitz continuous function space in $\Omega$. Under the conditions:
 \bess
(u_0,v_0)\in C^{1-}([-h_0,h_0])\times W^2_p(-h_0,h_0) \ \mbox{with} \ p>3, \ u_0(\pm h_0)=v_0(\pm h_0)=0,
\label{1.4}
 \eess
it has been proved in the first part (\cite{WW-I}) that \eqref{1.1} has a unique global solution $(u,v,g,h)$:
 \bes
 0< u\le k_1,\ \ 0<v\le k_2,\ \ g'(t)<0, \ \ h'(t)>0, \ \ 0<-v_x(t,h(t)),\ v_x(t,g(t))\le k_3,
 \lbl{2.1}\ees
where
 \[k_1=\max\left\{\|u_0\|_\infty,\,a\right\}, \ \ k_3=\max\left\{\frac 1{h_0}, \ \sqrt{\frac{L}{2d_2}}, \
 \frac {\|v_0'\|_{C([-h_0,h_0])}}{k_2}\right\}, \ \ L=\sup_{(0,k_1)\times(0,k_2)}f_2(u,v),\]
and
 \bess
 k_2=\max\left\{\|v_0\|_\infty,\,1\right\} \ \ \mbox{when\ \eqref{1a.2}\ hold}, \ \ \
 k_2=\max\left\{\|v_0\|_\infty,\,1+ck_1\right\} \ \ \mbox{when\ \eqref{1a.3}\ hold}.
 \eess
Moreover, for any given $0<\tau<T<\yy$, we have
 \[ g,\,h\in C^{1+\alpha/2}([0,T]), \ \  u\in C^{1,1-}(\ol D^T_{g,h}), \ \ v\in C^{1+\alpha/2,\,2+\alpha}([\tau,T]\times[g(t),h(t)]),\]
where
 \[D^T_{g,h}=\left\{(t,x)\in\mathbb{R}^2:\, 0<t\leq T,~g(t)<x<h(t)\right\},\]
$u\in C^{1,1-}(\ol D^T_{g,h})$ means that $u$ is differentiable continuously in $t\in[0,T]$ and is Lipschitz continuous in $x\in[g(t),h(t)]$.

\vskip 4pt
In view of \eqref{2.1} we can define
 \[ \lim_{t\to\infty}g(t)=g_\infty\in[-\infty,-h_0), \ \ \ \lim_{t\to\infty}h(t)=h_\infty\in(h_0,\infty].\]
 Clearly we have either
 \[\mbox{(i) $h_\infty-g_\infty<\infty$, \ \ \ or \ (ii) $h_\infty-g_\infty=\infty$. }\]
 We will call (i) the vanishing case, and call (ii) the spreading case.

The main aims of this part are concerned with the spreading-vanishing dichotomy, the criteria of spreading and vanishing, as well as the long-time behavior of solution when spreading happens. The main results of this part are the following theorems.

\begin{theorem}[Spreading-vanishing dichotomy]\label{th1.1}\, Let $(u,v,g,h)$ be the unique solution of \eqref{1.1}. Then either
\begin{enumerate}[leftmargin=3em]
\item[{\rm(i)}] {\rm Spreading:} $h_\infty-g_\infty=\infty$,
  \end{enumerate}
or
 \begin{enumerate}[leftmargin=3em]
\item[{\rm(ii)}] {\rm Vanishing:} $h_\infty-g_\infty<\infty$ and
    \[\lim_{t\to\infty}\max_{g(t)\le x\le h(t)}u(t,x)=\lim_{t\to\infty}\max_{g(t)\le x\le h(t)}v(t,x)=0.\]
\end{enumerate}
\end{theorem}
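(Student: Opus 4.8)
The dichotomy between (i) and (ii) is immediate: by \eqref{2.1} the monotone bounded limits $g_\infty\in[-\infty,-h_0)$ and $h_\infty\in(h_0,\infty]$ exist, so exactly one of $h_\infty-g_\infty<\infty$, $h_\infty-g_\infty=\infty$ holds. Hence the real content is the implication: \emph{if} $h_\infty-g_\infty<\infty$ \emph{then} $\max_{[g(t),h(t)]}u(t,\cdot)\to0$ and $\max_{[g(t),h(t)]}v(t,\cdot)\to0$ as $t\to\infty$. In that case $g_\infty,h_\infty$ are finite, $h(t)-g(t)$ is bounded in $t$, and since $h'>0>g'$ we have $\int_0^\infty h'(t)\,{\rm d}t=h_\infty-h_0<\infty$ and $\int_0^\infty(-g'(t))\,{\rm d}t=-g_\infty-h_0<\infty$. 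I would prove the implication by contradiction through a translation–compactness argument, using the \emph{uniform‑in‑time} versions of the part I estimates: the $C^{1,1-}$ bound for $u$, the $C^{1+\alpha/2,\,2+\alpha}$ bound for $v$ up to the boundary, and the $C^{1+\alpha/2}$ bound for $g,h$ all hold with constants independent of $t$ because the coefficients, the reactions evaluated at the uniformly bounded $(u,v)$, the data norms and the interval lengths are bounded independently of $t$ once $h_\infty-g_\infty<\infty$.

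\emph{Step 1 ($v\to0$).} If not, there are $t_n\to\infty$, $\delta>0$ and $x_n\in[g(t_n),h(t_n)]\subset(g_\infty,h_\infty)$ with $v(t_n,x_n)\ge\delta$, and, after a subsequence, $x_n\to x_0\in[g_\infty,h_\infty]$. Translate: $u_n(t,x)=u(t+t_n,x)$, $v_n(t,x)=v(t+t_n,x)$ (extended by $0$ off the interval), $g_n(t)=g(t+t_n)$, $h_n(t)=h(t+t_n)$. Monotonicity and boundedness of $g,h$ give, by Dini's theorem, $g_n\to g_\infty$ and $h_n\to h_\infty$ locally uniformly; the uniform estimates together with Arzelà–Ascoli (after straightening the boundaries $x=h_n(t)$, $x=g_n(t)$, whose defining data are equi‑Hölder) allow us to assume, along a further subsequence, $u_n\to\tilde u$ in $C_{\rm loc}$, $v_n\to\tilde v$ in $C^{1,2}_{\rm loc}$ up to the converging boundary, with $\tilde u,\tilde v\ge0$ bounded, $\tilde u=\tilde v=0$ at $x=g_\infty,h_\infty$, and $(\tilde u,\tilde v)$ solving the fixed‑boundary analogue of the first two equations of \eqref{1.1} on $(0,\infty)\times(g_\infty,h_\infty)$. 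Passing the free boundary condition to the limit, $h_n'(t)\to-\mu\tilde v_x(t,h_\infty)+\rho\int_{g_\infty}^{h_\infty}\!\int_{h_\infty}^\infty\! J(x-y)\tilde u(t,x)\,{\rm d}y\,{\rm d}x$ locally uniformly in $t$; since $\int_0^T h_n'(t)\,{\rm d}t=h(T+t_n)-h(t_n)\to0$, the limit must vanish identically, i.e.
\[
0\equiv -\mu\tilde v_x(t,h_\infty)+\rho\int_{g_\infty}^{h_\infty}\!\int_{h_\infty}^\infty\! J(x-y)\,\tilde u(t,x)\,{\rm d}y\,{\rm d}x,\qquad t>0 .
\]
Both terms are $\ge0$ (for the first, $\tilde v\ge0$ with $\tilde v(t,h_\infty)=0$ forces $\tilde v_x(t,h_\infty)\le0$), hence both vanish; in particular $\tilde v_x(t,h_\infty)\equiv0$. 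Writing $f_2(\tilde u,\tilde v)=m(t,x)\tilde v$ with $m$ bounded (true in either model), $\tilde v\ge0$ solves a linear parabolic equation and vanishes together with its $x$‑derivative on $\{x=h_\infty\}$; the strong maximum principle and Hopf's lemma then force $\tilde v\equiv0$ on $[0,\infty)\times[g_\infty,h_\infty]$, contradicting $\tilde v(0,x_0)=\lim_n v(t_n,x_n)\ge\delta>0$.

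\emph{Step 2 ($u\to0$).} If not, repeat the construction to obtain $t_n\to\infty$, $x_n\to x_0$ with $u(t_n,x_n)\ge\delta>0$, and limits $\tilde u,\tilde v$ as above. By Step 1, $v_n\to0$ locally uniformly, so $\tilde v\equiv0$ and $\tilde u\ge0$ solves $\tilde u_t=d_1\int_{g_\infty}^{h_\infty}J(x-y)\tilde u(t,y)\,{\rm d}y-d_1\tilde u+\tilde u(a-\tilde u)$ on $(0,\infty)\times(g_\infty,h_\infty)$. As in Step 1, vanishing of the limiting free boundary velocity now gives $\int_{g_\infty}^{h_\infty}\!\int_{h_\infty}^\infty\! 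J(x-y)\tilde u(t,x)\,{\rm d}y\,{\rm d}x=0$ for all $t$; since $\int_{h_\infty}^\infty J(x-y)\,{\rm d}y>0$ for $x\in(h_\infty-\bar\varepsilon,h_\infty)$ by \eqref{1.5}, it follows that $\tilde u(t,\cdot)\equiv0$ on $(h_\infty-\bar\varepsilon,h_\infty)$ for every $t\ge0$. Now iterate leftward: if $\tilde u(t,\cdot)\equiv0$ on $(x^\ast,h_\infty)$ for all $t$, take $x'=x^\ast+\bar\varepsilon/2\in(x^\ast,h_\infty)$; then $\tilde u_t(t,x')\equiv0$, so the equation yields $\int_{g_\infty}^{h_\infty}J(x'-y)\tilde u(t,y)\,{\rm d}y\equiv0$, whence by \eqref{1.5} $\tilde u(t,\cdot)\equiv0$ on $(x'-\bar\varepsilon,x'+\bar\varepsilon)$, enlarging the zero set to $(x^\ast-\bar\varepsilon/2,h_\infty)$. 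After finitely many steps $\tilde u\equiv0$ on $[0,\infty)\times[g_\infty,h_\infty]$, contradicting $\tilde u(0,x_0)\ge\delta>0$.

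The main obstacle is the compactness step: securing the uniform‑in‑time regularity of $u,v,g,h$ and, above all, passing the coupled nonlocal–local system \emph{together with} the nonlocal Stefan‑type boundary conditions — in particular the boundary derivative $v_x(t,h(t))$ — to the limit through the translation, while controlling the behaviour near the endpoints $h_n(t)\uparrow h_\infty$, $g_n(t)\downarrow g_\infty$. Once the limit problem for $(\tilde u,\tilde v)$ on the fixed interval $[g_\infty,h_\infty]$ with zero limiting free boundary velocity is in hand, the Hopf‑lemma argument for $\tilde v$ and the nonlocal propagation‑of‑zero argument for $\tilde u$ are routine; everything from \eqref{2.1} that we use (uniform bounds and the uniform Lipschitz bound $|v_x|\le k_3$) is already available from part I.
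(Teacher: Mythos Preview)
Your argument is correct and follows a genuinely different route from the paper's. The paper proceeds modularly: after securing the uniform-in-time bound $\|v\|_{C^{(1+\alpha)/2,1+\alpha}}\le C$ (Lemma~\ref{l4.1}) and deducing $h'(t),g'(t)\to0$ from the H\"older continuity of the boundary velocities (Proposition~\ref{p4.1}), it disposes of $v$ by invoking an off-the-shelf result for local free boundary problems (Lemma~\ref{l4.a}), and handles $u$ through the eigenvalue inequality $\lambda_p(\mathcal{L}_{(g_\infty,h_\infty)}+a)\le0$, proved by contradiction (were the eigenvalue positive, a lower barrier via Proposition~\ref{p3.4} would force $h'(t)\ge{\rm const}>0$); comparison with the fixed-interval problem then finishes. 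Your translation--compactness scheme is more self-contained, replacing the cited lemma by a direct Hopf argument for $\tilde v$ and the eigenvalue step by a nonlocal propagation-of-zeros iteration for $\tilde u$; the price is the delicate compactness at the moving boundary that you rightly flag. One concrete advantage of the paper's route that yours forgoes: the eigenvalue inequality \eqref{4.4a} is reused verbatim in Section~4 to get the quantitative bound $h_\infty-g_\infty\le\ell^*$, a key ingredient of the spreading--vanishing criteria, so with your approach that bound would need a separate argument. Finally, note that the uniform-in-time regularity you rely on is established here (Lemma~\ref{l4.1}), not in Part~I, so you should prove it rather than cite it.
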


To determine the long-time behavior of the solution when spreading happens, we restrict to two special cases:
\begin{itemize}
\item[(a)] \underline{The weak competition case}: $(f_1, f_2)$ satisfies \eqref{1a.2} with $1/{c}>a>b$.
\item[(b)] \underline{The weak predation case}: $(f_1, f_2)$ satisfies \eqref{1a.3} with $a>b+abc$.
\end{itemize}

\begin{theorem}[Long-time behavior]\label{{th1.2}}\, Let $(u,v,g,h)$ be the unique solution of \eqref{1.1}   and $\dd\lim_{t\to\infty} (g(t), h(t))=\mathbb R$, i.e., spreading happens.

{\rm(i)} in the weak competition case we have
\bess
\lim_{t\rightarrow\infty}(u(t,x),v(t,x))=\kk(\df{a-b}{1-bc},\df{1-ac}{1-bc}\rr)\ \ {\rm locally\ uniformly\ for}\ x\in\R,
\eess

{\rm(ii)} in the weak predation case we have
\bess
\lim_{t\rightarrow\infty}(u(t,x),v(t,x))=\kk(\df{a-b}{1+bc},\df{1+ac}{1+bc}\rr)\ \ {\rm locally\ uniformly\ for}\ x\in\R.
\eess
\end{theorem}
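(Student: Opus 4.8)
Here is a proof plan.

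The plan is to prove both parts by the same squeezing (iteration) argument, exploiting that spreading forces $g(t)\to-\infty$ and $h(t)\to+\infty$, so that for every $\ell>0$ the fixed interval $(-\ell,\ell)$ lies inside $(g(t),h(t))$ for all large $t$. I will construct spatially constant bounds $\bar A_n\searrow A^*$, $\underline A_n\nearrow A^*$, $\bar B_n\searrow B^*$, $\underline B_n\nearrow B^*$ such that, for every compact $K\subset\R$,
\[
\underline A_n\le\liminf_{t\to\infty}\min_{x\in K}u(t,x)\le\limsup_{t\to\infty}\max_{x\in K}u(t,x)\le\bar A_n
\]
and analogously for $v$ with $\underline B_n,\bar B_n$; here $(A^*,B^*)=(\df{a-b}{1-bc},\df{1-ac}{1-bc})$ in the weak competition case and $(\df{a-b}{1+bc},\df{1+ac}{1+bc})$ in the weak predation case. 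Since each bound is a constant, letting $n\to\infty$ then gives $\lim_{t\to\infty}(u,v)=(A^*,B^*)$ locally uniformly in $\R$, as claimed.

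The iteration starts from crude bounds. Since $v\ge0$ and $\int_{g(t)}^{h(t)}J(x-y)\,\dy\le1$, any solution $\bar U$ of $\bar U'=\bar U(a-\bar U)$ with $\bar U(0)\ge k_1$ is a spatially constant supersolution of the $u$-equation (extended by $0$ outside $(g(t),h(t))$), so $\bar A_1:=a$; comparing $v$ with a logistic ODE gives $\bar B_1:=1$ in the competition case and, using $\limsup_{t\to\infty}\max u\le a$ and an $\ep$-limit, $\bar B_1:=1+ca$ in the predation case. For the first lower bounds I fix $\ep>0$ small and $\ell$ large; since $(-\ell,\ell)\subset(g(t),h(t))$ and $v(T,\cdot),u(T,\cdot)>0$ on $[-\ell,\ell]$ for all large $t\ge T$, I compare from below: $v$ with the Dirichlet logistic problem $w_t=d_2w_{xx}+w(m_v-w)$ on $(-\ell,\ell)$ and $u$ with the Dirichlet-type nonlocal logistic problem $w_t=d_1\int_{-\ell}^{\ell}J(x-y)w\,\dy-d_1w+w(m_u-w)$, where the weak-competition inequalities $a<1/c,\ a>b$ (resp.\ the weak-predation inequality $a>b+abc$) make the relevant growth rates $m_u,m_v>0$. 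For $\ell$ large these problems have positive steady states attracting all positive solutions, and those steady states tend to $m_u,m_v$ locally uniformly as $\ell\to\infty$; letting $\ell\to\infty$ then $\ep\to0$ yields $\underline A_1,\underline B_1$, namely $\underline A_1=a-b\bar B_1$ and $\underline B_1=1-c\bar A_1$ in the competition case, and $\underline A_1=a-b\bar B_1$, $\underline B_1=1+c\underline A_1$ in the predation case.

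The iteration then alternates: the lower bounds are fed back to sharpen the upper bounds to $\bar A_{n+1}=a-b\underline B_n$ and $\bar B_{n+1}=1\mp c\underline A_n$ (sign $-$ for competition, $+$ for predation), and these in turn sharpen the lower bounds to $\underline A_{n+1}=a-b\bar B_{n+1}$, $\underline B_{n+1}=1\mp c\bar A_{n+1}$, each step again a comparison on a large $(-\ell,\ell)$ followed by $\ell\to\infty$ and $\ep\to0$. I expect the real work to lie in the upper-bound refinements for $u$: the lower-bound steps are clean, since on $(-\ell,\ell)$ one merely discards the nonnegative contributions coming from outside, but an upper-bound step must cope with the tail $d_1\int_{(g(t),h(t))\setminus(-\ell,\ell)}J(x-y)u\,\dy$ left by localization, and with the fact that the sharpened lower bound on $v$ is valid only in the interior of $(g(t),h(t))$; consequently the comparison function has to be a boundary-layer supersolution, equal to the crude bound $a+\ep$ near $\pm\ell$ and close to $a-b\underline B_n$ in the bulk. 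One then checks, using that $\int_{|z|>r}J\to0$ as $r\to\infty$ together with known facts about nonlocal logistic equations on expanding intervals (existence, uniqueness and localization of the steady state, and convergence of the parabolic flow to it; cf.\ \cite{WW-I}), that this supersolution converges to $a-b\underline B_n$ locally uniformly as $\ell\to\infty$. Establishing these nonlocal ingredients and the boundary-layer construction is the main technical obstacle; the local-diffusion analogues for $v$ are classical parabolic facts.

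Finally, the maps $s\mapsto a-bs$ and $s\mapsto 1\mp cs$ are decreasing, so $\{\bar A_n\},\{\bar B_n\}$ are nonincreasing, $\{\underline A_n\},\{\underline B_n\}$ nondecreasing, and all four converge; their limits satisfy $A=a-bB$, $B=1\mp cA$, and the two-step recursion $A\mapsto a-b(1\mp cA)$ has contraction factor $bc<1$ — valid because $b<a<1/c$ forces $bc<1$ in the competition case, and $a(1-bc)>b>0$ (from $a>b+abc$) forces $bc<1$ in the predation case. Hence the upper and lower limits coincide and equal $(A^*,B^*)$, which establishes the stated convergence in both cases.
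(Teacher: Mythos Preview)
Your approach is essentially the one the paper invokes: the paper's own proof (Theorem~4.5) simply cites Lemma~\ref{l4.2} to get $g_\infty=-\infty$, $h_\infty=+\infty$ and then defers to the iteration arguments of \cite[Theorem~1.4, Lemma~3.14]{DWZ19} and \cite[Theorem~4.3, Propositions~B.1--B.2]{WZjdde17}, which is exactly the squeezing scheme you describe --- comparison on expanding intervals $(-\ell,\ell)$ with nonlocal/local logistic problems, followed by $\ell\to\infty$, $\ep\to0$, and contraction of the two-step map with factor $bc<1$.

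One bookkeeping slip to correct in the predation case: your unified formula $\bar B_{n+1}=1\mp c\,\underline A_n$ and $\underline B_{n+1}=1\mp c\,\bar A_{n+1}$ is wrong with the $+$ sign, because in \eqref{1a.3} the term $+cu$ makes $v$ \emph{increasing} in $u$. Hence an upper bound on $v$ must use the \emph{upper} bound on $u$, and a lower bound on $v$ must use the \emph{lower} bound on $u$; the correct recursion is $\bar B_{n+1}=1+c\,\bar A_{n+1}$ and $\underline B_{n+1}=1+c\,\underline A_{n+1}$ (consistent with your own first step $\underline B_1=1+c\,\underline A_1$). With this fix the monotonicity and contraction arguments go through unchanged.
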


We remark that, the spreading-vanishing dichotomy and long-time behavior when spreading happens are  parallel to those of the local system (\cite{GW12, WZjdde14, Wcnsns15}) and nonlocal system (\cite{DWZ19}). Unfortunately, we have to leave the spreading speeds of the moving boundaries $g,\,h$ when spreading happens as open issue.

\begin{theorem}[Spreading-vanishing criteria]\label{th1.3}
Under the conditions of Theorem \ref{th1.1}. If one of the following holds:
 \vspace{-2mm}
\begin{enumerate}[leftmargin=3em]
\item[{\rm(i)}]\, $a\ge d_1$,\vspace{-2mm}

\item[{\rm(ii)}]\, $h_0\ge \frac 12\pi\sqrt{d_2}$,\vspace{-2mm}

\item[{\rm(iii)}]\, $a<d_1$ and $h_0\ge \ell^*/2$, where $\ell^*$ satisfies $\lambda_p(\mathcal{L}_I+a)=0$ when  $|I|=\ell^*$,
 \vspace{-2mm}\end{enumerate}
then spreading always happens.

If $a<d_1$ and $h_0<\frac 12\min\{\pi\sqrt{d_2},\ell^*\}$, then there is $\Lambda^*\ge\Lambda_*>0$ such that $h_\infty-g_\infty<\infty$ when $\mu+\rho\le\Lambda_*$, and $h_\infty-g_\infty=\infty$ when $\mu+\rho\ge\Lambda^*$.
\end{theorem}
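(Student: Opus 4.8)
\emph{An engine: a persistence dichotomy.} I would first isolate the fact that \emph{if vanishing occurs, then $h_\infty-g_\infty\le\pi\sqrt{d_2}$ and $\lambda_p(\mathcal{L}_{(g_\infty,h_\infty)}+a)\le0$.} Indeed, if vanishing occurs then by Theorem~\ref{th1.1}(ii) $u,v\to0$ uniformly, so for each small $\varepsilon>0$ there is $T_\varepsilon$ with $f_1(u,v)\ge u(a-\varepsilon-u)$ and $f_2(u,v)\ge v(1-\varepsilon-v)$ on $[T_\varepsilon,\infty)$ (the latter automatic in the predation case). Since $(g(t),h(t))\uparrow(g_\infty,h_\infty)$, every compact $I\subset(g_\infty,h_\infty)$ is eventually contained in $(g(t),h(t))$; there $v$ is a supersolution of the scalar local logistic Dirichlet problem on $I$ (diffusion $d_2$, rate $1-\varepsilon$) and $u$ is a supersolution of the scalar \emph{nonlocal} logistic Dirichlet problem on $I$ (operator $\mathcal{L}_I$, rate $a-\varepsilon$). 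The standard existence/global stability of the positive steady states of these scalar problems then forces $\liminf_t v>0$ on $I$ whenever $1-\varepsilon>d_2(\pi/|I|)^2$, and $\liminf_t u>0$ on $I$ whenever $\lambda_p(\mathcal{L}_I+a-\varepsilon)>0$; either contradicts $u,v\to0$. Letting $\varepsilon\downarrow0$ and $|I|\uparrow h_\infty-g_\infty$ gives the claim. Using that $\lambda_p(\mathcal{L}_I+a)=\lambda_p(\mathcal{L}_I)+a$ is continuous and strictly increasing in $|I|$, with limits $a-d_1$ as $|I|\to0$ and $a$ as $|I|\to\infty$: when $a<d_1$ the second condition reads $h_\infty-g_\infty\le\ell^*$, while when $a\ge d_1$ one has $\lambda_p(\mathcal{L}_I+a)>-d_1+a\ge0$ for every bounded $I$, so the second condition is impossible and vanishing is excluded outright. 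Since always $h_\infty-g_\infty>2h_0$: hypothesis (i) violates the second conclusion, (ii) violates the first ($2h_0\ge\pi\sqrt{d_2}$), and (iii) violates the second ($2h_0\ge\ell^*$). Hence each of (i)--(iii) forces spreading.

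\emph{Small $\mu+\rho$: vanishing via a bounded supersolution.} Assume $a<d_1$ and $h_0<\tfrac12\min\{\pi\sqrt{d_2},\ell^*\}$, and pick $h^*\in(h_0,\tfrac12\min\{\pi\sqrt{d_2},\ell^*\})$, so that $\lambda^*:=\lambda_p(\mathcal{L}_{(-h^*,h^*)}+a)<0$ (because $2h^*<\ell^*$) and $1-d_2(\pi/2h^*)^2<0$. Let $\phi>0$ be the principal eigenfunction of $\mathcal{L}_{(-h^*,h^*)}+a$, extended by $0$, let $V(y)=\cos(\tfrac\pi2 y)$ on $[-1,1]$, and choose $\delta>0$ small, $\gamma\in(0,\delta]$. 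Set
\[ \bar h(t)=-\bar g(t)=h_0+(h^*-h_0)\bigl(1-\tfrac12 e^{-\gamma t}\bigr),\qquad \bar u=Me^{-\delta t}\phi(x),\qquad \bar v=Me^{-\delta t}e^{\gamma_1(t)}V\!\bigl(x/\bar h(t)\bigr), \]
with $\gamma_1\equiv0$ in the competition case and $\gamma_1(t)=\tfrac{cM\|\phi\|_\infty}{\delta}(1-e^{-\delta t})$ in the predation case, and $M$ large enough that $\bar u(0,\cdot)\ge u_0$, $\bar v(0,\cdot)\ge v_0$ on $[-h_0,h_0]$. Using $\mathcal{L}_{(\bar g,\bar h)}\phi\le(\lambda^*-a)\phi$ together with $f_1\le a\bar u$, the choice $\delta\le-\lambda^*$ makes $\bar u$ a supersolution of the $u$-equation; using $f_2\le\bar v(1+c\bar u)$, the sign of $V'$, the factor $e^{\gamma_1(t)}$ and $2h^*<\pi\sqrt{d_2}$, a small $\delta$ makes $\bar v$ a supersolution of the $v$-equation; and since the boundary-speed contributions $-\mu\bar v_x(t,\bar h(t))$ and $\rho\int\!\!\int J\bar u$ are $O((\mu+\rho)e^{-\delta t})$ while $\bar h'(t)$ is a fixed positive multiple of $e^{-\gamma t}$, the free-boundary inequalities hold once $\mu+\rho\le\Lambda_*$ for a suitable $\Lambda_*>0$. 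The comparison principle of part~I then yields $g(t)\ge\bar g(t)>-h^*$, $h(t)\le\bar h(t)<h^*$ for all $t$, so $h_\infty-g_\infty\le2h^*<\infty$: vanishing.

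\emph{Large $\mu+\rho$: spreading.} Argue by contradiction: if vanishing occurs then, by the persistence dichotomy, $h_\infty-g_\infty\le H_0:=\min\{\pi\sqrt{d_2},\ell^*\}$, a bound \emph{independent of} $\mu,\rho$, and $u,v\to0$. Fix $\varepsilon$ small and let $T_1$ be the first time after which $c\max_x u\le\varepsilon$ and $b\max_x v\le\varepsilon$ both hold; at $T_1$ one of $u,v$ attains its threshold, so that component has spatial maximum equal to a fixed constant, its domain has length $\le H_0$ and contains $[-h_0,h_0]$, and hence --- by the interior regularity estimates of part~I, with constants depending only on $H_0$ and $k_1,k_2,k_3$ --- that component is $\ge c_*$ on an interval $I_*$ of fixed length inside $(g(T_1),h(T_1))$, with $c_*$ and $|I_*|$ independent of $\mu,\rho$. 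For $t\ge T_1$ one has $u_t\ge\mathcal{L}_{(g,h)}u+u(a-\varepsilon-u)$ and $v_t\ge d_2 v_{xx}+v(1-\varepsilon-v)$ together with $h'\ge-\mu v_x(t,h(t))$, $g'\le-\mu v_x(t,g(t))$ and $h'\ge\rho\int\!\!\int Ju$, $g'\le-\rho\int\!\!\int Ju$. So, if the component bounded below at $T_1$ is $v$, then $(v,g,h)$ is a supersolution on $[T_1,\infty)$ of the scalar \emph{local} free boundary problem with reaction $s(1-\varepsilon-s)$, coefficient $\mu$, initial interval $I_*$ and a positive cosine datum of height $c_*$ on $I_*$; the classical ``large-coefficient $\Rightarrow$ spreading'' theorem for that scalar problem provides a finite $\mu^*=\mu^*(c_*,|I_*|,d_2,\varepsilon)$ with $\mu\ge\mu^*\Rightarrow$ spreading of $(g,h)$, contradicting $h_\infty-g_\infty<\infty$; if instead the component bounded below is $u$, the same scheme with the scalar \emph{nonlocal} free boundary problem and the $\rho$-terms gives a finite $\rho^*$. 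Taking $\Lambda^*:=2\max\{\mu^*,\rho^*\}$, so that $\mu+\rho\ge\Lambda^*$ implies $\mu\ge\mu^*$ or $\rho\ge\rho^*$, produces the contradiction. The main obstacle lies precisely here: passing from ``$\mu+\rho$ large'' to ``the free boundaries separate past $H_0$ in finite time'' needs a lower bound on $u$ or $v$ (at a fixed time, on a fixed interval) that is uniform in $\mu,\rho$ --- and the natural one is circular, since $T_1$ depends on $\mu,\rho$; the circularity is broken only by first extracting the $\mu,\rho$-independent cap $h_\infty-g_\infty\le H_0$ from the persistence dichotomy, by having the scalar large-coefficient spreading theorems available for both the local and the nonlocal free boundary problems, and by matching the large coefficient with the species that can be quantitatively bounded below, which requires care since competition/predation lets one species suppress the other. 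A pervasive technical nuisance is that the coupling must be neutralized in every comparison by crude one-sided bounds such as $f_1\le au$ and $f_2\le v(1+cu)$, and controlling these near the moving boundary is delicate.
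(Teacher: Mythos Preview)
Your ``engine'' (the persistence dichotomy) and its consequences for cases (i)--(iii) are correct and coincide with the paper's route: Theorem~5.1 gives the bound $h_\infty-g_\infty\le\pi\sqrt{d_2}$ under vanishing, and inequality~(3.2) in the paper (your second conclusion $\lambda_p(\mathcal{L}_{(g_\infty,h_\infty)}+a)\le0$) is established inside the proof of Theorem~4.3 by exactly the comparison you sketch. Your supersolution construction for small $\mu+\rho$ differs in detail from the paper's Lemma~5.6 (you scale the cosine by the moving interval $[\bar g,\bar h]$ itself rather than by an auxiliary $s(t)$, and you absorb the predation term $+c\bar u$ through the factor $e^{\gamma_1(t)}$ instead of the pointwise inequality $c\bar u\le\bar v$), but the mechanism is the same and the verification goes through.

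The large-$\mu+\rho$ argument, however, has a genuine gap. You argue: at time $T_1$ \emph{one} of $u,v$ has a quantitative lower bound on a fixed interval, and then the ``large-coefficient $\Rightarrow$ spreading'' scalar theorem with the \emph{corresponding} coefficient ($\mu$ if $v$, $\rho$ if $u$) yields a contradiction. But $\mu+\rho\ge\Lambda^*$ only guarantees that $\mu\ge\mu^*$ \emph{or} $\rho\ge\rho^*$; it does not guarantee that the large coefficient matches the species that happens to be bounded below at $T_1$. Nothing in your setup prevents the scenario $\mu<\mu^*$, $\rho\ge\rho^*$, with $v$ (not $u$) the species attaining its threshold at $T_1$ --- and then neither scalar lemma applies. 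You yourself flag this ``matching'' as requiring care, but you do not actually resolve it; the sentence ``which requires care since competition/predation lets one species suppress the other'' is a diagnosis, not a proof.

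The paper sidesteps this entirely by a much simpler device. Since $u,v$ are globally bounded by $k_1,k_2$, one has $f_1(u,v)\ge -Cu$ and $f_2(u,v)\ge -Cv$ for all $t\ge0$ (no need to wait for the coupling to become small). Thus $(v,g,h)$ satisfies, from $t=0$, the \emph{linear} differential inequality $v_t-d_2v_{xx}\ge -Cv$ together with $h'\ge-\mu v_x(t,h)$, $g'\le-\mu v_x(t,g)$ and fixed initial data $v_0$; and $(u,g,h)$ satisfies the nonlocal analogue $u_t\ge d_1\!\int J u-d_1u-Cu$ with $h'\ge\rho\!\int\!\!\int Ju$, $g'\le-\rho\!\int\!\!\int Ju$ and fixed initial data $u_0$. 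The paper's Lemmas~5.1 and~5.2 are stated precisely for such \emph{linear} lower bounds and produce thresholds $\mu^0,\rho^0$ depending only on $(h_0,u_0,v_0,C,d_2,H)$ --- in particular independent of $\mu,\rho$. If $\mu+\rho\ge\mu^0+\rho^0$ then $\mu\ge\mu^0$ or $\rho\ge\rho^0$, and now the alignment problem disappears: whichever coefficient is large, the \emph{corresponding} species already has a fixed positive datum at $t=0$, so the matching lemma forces $h_\infty-g_\infty>2H>\min\{\pi\sqrt{d_2},\ell^*\}$, hence spreading by the part already proved. The point you missed is that the large-coefficient lemmas need only $-Cw$ on the right, not a logistic term; once you use that, there is no $T_1$, no regularity estimate uniform in $\mu,\rho$, and no matching issue.
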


Here we mention that, same as the single equation in \cite{CDLL}, nonlocal diffusion will change the spreading-vanishing criteria. For the corresponding local diffusive competition and prey-predator models, from the results of \cite{GW12, WZjdde14, Wcnsns15, WZhang16} we see that no matter how small is the diffusion coefficient $d_1$ in $d_1u_{xx}$ relative to $a$, vanishing can always happen if $h_0$ and $\mu,\,\rho$ are sufficiently small. However, for the nonlocal and local diffusions problem \eqref{1.1}, Theorem \ref{th1.3} shows that when $a\ge d_1$, spreading always happens no mater how small $h_0$, $\mu,\,\rho$, $u_0$ and $v_0$ are. Moreover, we find a new critical value $\frac12\min\{\pi\sqrt{d_2},\ell^*\}$ which plays an important role in governing the spreading and vanishing phenomenon.

This paper is arranged as follows. Section 2 gives some preliminary results. Section 3 is devoted to the spreading-vanishing dichotomy and long-time behavior when spreading happens. The criteria governing spreading and vanishing will be given in Section 4.

\section{Preliminaries}
\setcounter{equation}{0} {\setlength\arraycolsep{2pt}

For the given $T>0$, we define
 \bess
\mathbb H^T&=&\left\{h\in C^1([0,T])~:~h(0)=h_0,
\; 0<h'(t)\le R(t)\right\},\\[.1mm]
\mathbb G^T&=&\left\{g\in C^1([0,T])~:-g\in\mathbb{H}^T\right\}.
 \eess

\subsection{ Maximum principle and comparison principle}

\begin{lemma}[Maximum Principle {\cite[Lemma 2.2]{CDLL}}]\label{l2.2}
Assume that $J$ satisfies {\bf(J)} and $d$ is a positive constant, and $(r, \eta)\in\mathbb G^T\times\mathbb H^T$. Suppose that $\psi, \psi_t\in C(\overline D^T_{\eta,r})$ and fulfill, for some $\varrho\in L^\infty (D^T_{\eta,r})$,
 \bess\left\{\begin{aligned}
&\psi_t\ge d\int_{\eta(t)}^{r(t)}\!J(x,y)\psi(t,y){\rm d}y-d\psi+\varrho\psi, && (t,x)\in D^T_{\eta,r},\\
& \psi(t, \eta(t)) \geq 0,\ \psi(t, r(t)) \geq 0, && 0\le t\le T,\\
&\psi(0,x)\ge0,  && |x|\le h_0.
 \end{aligned}\right.\eess
Then $\psi\ge0$ on $\overline D^T_{\eta, r}$. Moreover, if $\psi(0,x)\not\equiv0$ in $[-h_0, h_0]$, then $\psi>0$ in $D^T_{\eta,r}$.
\end{lemma}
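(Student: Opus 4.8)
The plan is to prove the two assertions by comparison with explicit barriers, using only that $\int_\R J=1$ (so $0\le\int_{\eta(t)}^{r(t)}J(x,y)\,\dy\le1$), that $\psi$ and $\psi_t$ are continuous on the compact set $\overline D^T_{\eta,r}$, and that $[\eta(t),r(t)]$ is nondecreasing in $t$ with $[-h_0,h_0]\subset(\eta(t),r(t))$ for $t>0$ (because $-\eta,\,r\in\mathbb H^T$ are strictly increasing from $h_0$).

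Step 1 (nonnegativity). Fix $\sigma>0$ and set $\phi=\psi+\sigma e^{Kt}$ with $K>\|\varrho\|_{L^\infty(D^T_{\eta,r})}$. Then $\phi$ is continuous on $\overline D^T_{\eta,r}$ and strictly positive on the parabolic boundary $\big(\{0\}\times[-h_0,h_0]\big)\cup\{(t,\eta(t)),\,(t,r(t)):0\le t\le T\}$, since there $\psi\ge0$ and $\sigma e^{Kt}>0$. If $\phi$ were not positive throughout, continuity would give a first time $t_0\in(0,T]$ and a point $x_0\in(\eta(t_0),r(t_0))$ with $\phi(t_0,x_0)=0$, $\phi(t_0,\cdot)\ge0$ on $[\eta(t_0),r(t_0)]$, and $\phi_t(t_0,x_0)\le0$ (a one-sided derivative, legitimate because $x_0$ stays interior for $t$ slightly below $t_0$). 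Then $\psi(t_0,x_0)=-\sigma e^{Kt_0}$ and $\psi(t_0,y)\ge-\sigma e^{Kt_0}$ for $y\in[\eta(t_0),r(t_0)]$; inserting these bounds and $0\le\int_{\eta(t_0)}^{r(t_0)}J(x_0,y)\,\dy\le1$ into the differential inequality at $(t_0,x_0)$, the term $-d\psi(t_0,x_0)$ cancels the worst case of the nonlocal term and one is left with $\psi_t(t_0,x_0)\ge-\|\varrho\|_{L^\infty}\sigma e^{Kt_0}$, whence $\phi_t(t_0,x_0)\ge(K-\|\varrho\|_{L^\infty})\sigma e^{Kt_0}>0$, a contradiction. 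Hence $\phi>0$ on $\overline D^T_{\eta,r}$, and letting $\sigma\to0$ gives $\psi\ge0$.

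Step 2 (strong positivity). Assume $\psi(0,\cdot)\not\equiv0$ on $[-h_0,h_0]$ and, for contradiction, that $\psi(t_1,x_1)=0$ at some $(t_1,x_1)\in D^T_{\eta,r}$. Since $\psi\ge0$ has an interior minimum there, $\psi_t(t_1,x_1)\le0$, and the differential inequality forces $\int_{\eta(t_1)}^{r(t_1)}J(x_1,y)\psi(t_1,y)\,\dy\le0$; the integrand being nonnegative, the integral vanishes, and since $J(x_1,y)>\delta_0$ for $|x_1-y|<\bar\varepsilon$ by \eqref{1.5}, continuity of $\psi(t_1,\cdot)$ yields $\psi(t_1,\cdot)\equiv0$ on $(x_1-\bar\varepsilon,x_1+\bar\varepsilon)\cap[\eta(t_1),r(t_1)]$. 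Thus the zero set of $\psi(t_1,\cdot)$ is nonempty, and relatively closed and relatively open in the connected interval $(\eta(t_1),r(t_1))$, so $\psi(t_1,\cdot)\equiv0$ on $[\eta(t_1),r(t_1)]$. Now fix $x\in[-h_0,h_0]$; then $(t,x)\in D^T_{\eta,r}$ for every $t\in(0,t_1]$, and there $\psi\ge0$ and $\int_{\eta(t)}^{r(t)}J(x,y)\psi(t,y)\,\dy\ge0$ give the scalar inequality $\psi_t(t,x)\ge(\varrho(t,x)-d)\psi(t,x)$. Hence $t\mapsto\psi(t,x)\exp\!\big(-\int_0^t(\varrho(s,x)-d)\,{\rm d}s\big)$ is nondecreasing on $[0,t_1]$; comparing its values at $t=0$ and $t=t_1$ and using $\psi(t_1,x)=0$, $\psi(0,x)\ge0$, we get $\psi(0,x)=0$. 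As $x\in[-h_0,h_0]$ was arbitrary this contradicts $\psi(0,\cdot)\not\equiv0$, so $\psi>0$ in $D^T_{\eta,r}$.

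The barrier estimate in Step 1 and the open/closed argument in Step 2 are routine for nonlocal operators with $\int J=1$; the real care is needed in the interplay with the moving, expanding boundary — ensuring the touching point $x_0$ and the interior zero $x_1$ are strictly interior in space (so the one-sided time-derivative inequalities hold there), and carrying the vanishing of $\psi$ from $t_1$ down to $t=0$ along the \emph{fixed} interval $[-h_0,h_0]$, which works precisely because $[\eta(t),r(t)]$ contains $[-h_0,h_0]$ for all $t$. A harmless technicality is that $\varrho$ is only $L^\infty$: the pointwise inequalities should be read at points where the differential inequality holds, and only the a.e.\ bound $\varrho\le\|\varrho\|_{L^\infty}$ and a scalar comparison for Lipschitz functions are used. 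I do not expect any substantial obstacle; the monotone-boundary bookkeeping in Step 2 is the most delicate point.
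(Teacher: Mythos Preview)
The paper does not supply its own proof of this lemma; it is quoted verbatim from \cite[Lemma~2.2]{CDLL} and used as a black box. Your argument is correct and is essentially the standard one for nonlocal parabolic operators: a strict exponential barrier in Step~1 and an open--closed/backward propagation argument in Step~2. A couple of small remarks. First, the notation $(r,\eta)\in\mathbb G^T\times\mathbb H^T$ in the statement is a typo in the paper; you have silently (and correctly) read it as $(\eta,r)\in\mathbb G^T\times\mathbb H^T$ so that the spatial domain is expanding, which is exactly what is needed for your claim that $x_0$ lies in $(\eta(t),r(t))$ for $t$ slightly below $t_0$. Second, in the backward step it is cleaner to replace $\varrho$ by the constant $-\|\varrho\|_{L^\infty}$ and use $\big(\psi(t,x)e^{(d+\|\varrho\|_{L^\infty})t}\big)_t\ge 0$, which avoids any measurability issue with $\int_0^t\varrho(s,x)\,{\rm d}s$; you already note this at the end. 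With these cosmetic points addressed the proof is complete.
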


\begin{lemma}[Maximum principle {\cite[Lemma 3.3]{CDLL}}]\label{l3.3} For $T>0$, set $\Omega_0:=(0, T]\times [-h_0, h_0]$. Suppose $u,\,u_t\in C(\ol\Omega_0)$ and for some $\varrho\in L^\infty(\Omega_0)$,
 \bess \left\{\begin{aligned}
&u_t\ge d\int_{-h_0}^{h_0}\!J(x-y)u(t,y){\rm d}y-d u+\varrho u, && (t,x)\in\Omega_0,\\
&u(0,x)\ge0,  &&|x|\le h_0.
  \end{aligned}\right.\eess
Then $u\ge0$ on $\ol\Omega_0$. Moreover, if $u(0,x)\not\equiv0$ in $[-h_0, h_0]$, then $u>0$ in $\Omega_0$.
\end{lemma}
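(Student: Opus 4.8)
The plan is to run the classical maximum-principle argument --- a subsolution of a linear equation cannot attain a negative minimum away from the initial time --- adapted to the nonlocal operator, and then to bootstrap nonnegativity into strict positivity by a propagation-plus-connectedness argument in the spatial variable.

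First I would prove $u\ge0$. Set $M:=\|\varrho\|_{L^\infty(\Omega_0)}$ and $w(t,x):=e^{-(M+1)t}u(t,x)$; then $w,w_t\in C(\ol\Omega_0)$, $w(0,\cdot)=u(0,\cdot)\ge0$, and one computes
\[
w_t\ \ge\ d\int_{-h_0}^{h_0}\!J(x-y)\,w(t,y)\,{\rm d}y-dw+(\varrho-M-1)\,w\qquad\text{in }\Omega_0,
\]
where now the zeroth-order coefficient obeys $\varrho-M-1\le-1$. Suppose, for contradiction, that $m:=\min_{\ol\Omega_0}w<0$, attained at some $(t_0,x_0)$. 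Since $w(0,\cdot)\ge0>m$ we have $t_0\in(0,T]$, and as $(t_0,x_0)$ minimizes $w$ over $[0,T]\times[-h_0,h_0]$ we get $w_t(t_0,x_0)\le0$ (a vanishing derivative if $t_0<T$, a one-sided derivative $\le0$ if $t_0=T$). On the other hand $w(t_0,\cdot)\ge m$, $J\ge0$, $\int_{-h_0}^{h_0}J(x_0-y)\,{\rm d}y\le\int_{\R}J=1$ and $m<0$ give
\[
d\int_{-h_0}^{h_0}\!J(x_0-y)w(t_0,y)\,{\rm d}y-dw(t_0,x_0)\ \ge\ dm-dm=0,
\]
while $(\varrho(t_0,x_0)-M-1)\,w(t_0,x_0)\ge-m>0$. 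Adding these, $w_t(t_0,x_0)>0$, a contradiction. Hence $w\ge0$, i.e. $u\ge0$ on $\ol\Omega_0$.

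Next I would upgrade this to $u>0$ in $\Omega_0$ under the extra assumption $u(0,\cdot)\not\equiv0$. Since $u\ge0$, $J\ge0$ and $\varrho\ge-M$, the hypothesis forces $u_t\ge-(d+M)u$ in $\Omega_0$, so for each fixed $x$ the map $t\mapsto e^{(d+M)t}u(t,x)$ is nondecreasing on $[0,T]$. If $u(t_1,x_1)=0$ for some $(t_1,x_1)\in\Omega_0$ (so $t_1>0$), this monotonicity together with $u\ge0$ forces $u(t,x_1)=0$ for all $t\in[0,t_1]$. Put $Z:=\{x\in[-h_0,h_0]:u(t,x)=0\ \text{for all}\ t\in[0,t_1]\}$; then $x_1\in Z$ and $Z$ is closed by continuity of $u$. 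For $x_*\in Z$ one has $u(\cdot,x_*)\equiv0$ on $[0,t_1]$, hence $u_t(t,x_*)=0$ for $t\in(0,t_1)$; substituting into the differential inequality at $(t,x_*)$ and using $u(t,x_*)=0$ yields $\int_{-h_0}^{h_0}J(x_*-y)u(t,y)\,{\rm d}y\le0$, so, the integrand being nonnegative and continuous in $y$, $J(x_*-y)u(t,y)\equiv0$. By \eqref{1.5}, $J(x_*-y)>0$ whenever $|y-x_*|<\bar\varepsilon$, so $u(t,\cdot)\equiv0$ on $(x_*-\bar\varepsilon,x_*+\bar\varepsilon)\cap[-h_0,h_0]$ for $t\in(0,t_1)$, and by continuity for all $t\in[0,t_1]$. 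Thus this relatively open interval lies in $Z$, i.e. $Z$ is open in $[-h_0,h_0]$. Being nonempty, relatively open and relatively closed in the connected set $[-h_0,h_0]$, $Z=[-h_0,h_0]$; in particular $u(0,\cdot)\equiv0$, contradicting the hypothesis. Therefore $u>0$ in $\Omega_0$.

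The one place that needs care is the pointwise use of the differential inequality at the extremal point $(t_0,x_0)$ --- and at the points $(t,x_*)$ in the second step --- given that $\varrho$ is only bounded measurable; I would handle this in the standard way, by reading the hypothesis as requiring the inequality to hold at \emph{every} point of $\Omega_0$ for a fixed representative of $\varrho$, so that only the crude bounds $\pm\|\varrho\|_\infty$ on $\varrho$ are ever invoked. The remaining steps are bookkeeping: checking that the minimum in the first step cannot lie on $\{t=0\}$ (forced by $u(0,\cdot)\ge0$), that $t_0=T$ still yields $w_t(t_0,x_0)\le0$ via a left-hand derivative, and using continuity of $J$ and of $u$ to pass from ``for a.e. $y$'' to ``for every $y$'' when concluding the local vanishing of $u$.
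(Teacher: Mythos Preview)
Your proof is correct. The paper itself does not prove this lemma; it simply states it with a citation to \cite[Lemma 3.3]{CDLL}. Your argument---an exponential reweighting to force a strictly negative zeroth-order coefficient followed by the first-minimum contradiction for nonnegativity, and then the open--closed connectedness argument exploiting $J(0)>0$ (via \eqref{1.5}) for strict positivity---is the standard route for such nonlocal maximum principles and is presumably close to what appears in the cited reference.
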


\begin{lemma}[Comparison principle]\label{l3.4}\, Let $T>0$, $\bar h,\,\bar g\in C([0,T])$, $\bar u\in C(\overline D^T_{\bar g,\bar h})$, $\bar v\in W^{1,2}_p(D^T_{\bar g,\bar h})$ with $p>3$ and satisfy
 \bes\label{3.2}
 \left\{\begin{aligned}
&\bar u_t\dd\ge d_1\int_{\bar g(t)}^{\bar h(t)}\!\!J(x-y)\bar u(t,y){\rm d}y
-d_1\bar u+\bar u(a-\bar u), &&(t,x)\in D^T_{\bar g,\bar h},\\
&\bar v_t\ge d_2\bar v_{xx}+\bar v(1-\bar v), &&(t,x)\in D^T_{\bar g,\bar h},\\
 &\bar u(t,\bar g(t))\ge 0,\ \
 \bar v(t,\bar h(t))\geq 0,\ \ && 0<t\le T,\\
&\bar h'(t)\ge-\mu\bar v_x(t,h(t))+\rho\int_{\bar g(t)}^{\bar h(t)}\!\!\int_{\bar h(t)}^{\infty}\!
 J(x-y)\bar u(t,x){\rm d}y{\rm d}x, &&0<t\le T,\\[1mm]
&\bar g'(t)\le-\mu\bar v_x(t,g(t))-\rho\int_{\bar g(t)}^{\bar h(t)}\!\!\int_{-\infty}^{\bar g(t)}\!J(x-y)\bar u(t,x){\rm d}y{\rm d}x, &&0<t\le T,\\
  &\bar u(0,x)\ge u_0(x),\ \ \bar v(0,x)\ge v_0(x); && |x|\le h_0,\\
  &\bar h(0)\ge h_0,~\ \bar g(0)\le-h_0.
 \end{aligned}\right.\ees
Let $(u, g, h)$ be the unique solution of \eqref{1.1} in there $(f_1, f_2)$ satisfies
\eqref{1a.2}. Then we have
  \[u\leq\bar u,~ \ v\leq\bar v,~ \ g\geq\bar g,~\ h\leq\bar h~\ \ \mbox{in} \ \ D^T_{g,h}.\]
\end{lemma}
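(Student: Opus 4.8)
The plan is to establish the two halves of the conclusion in their natural order: first the ordering of the moving boundaries, $\bar g(t)\le g(t)$ and $h(t)\le\bar h(t)$ for all $t\in[0,T]$, and then the pointwise inequalities $u\le\bar u$, $v\le\bar v$ on $\overline{D^T_{g,h}}$. Since the free boundary conditions in \eqref{1.1}--\eqref{3.2} tie the boundary speeds to $v_x$ and to a double integral of $u$, these two statements have to be obtained together, by a single continuity (first--crossing--time) argument. A routine preliminary perturbation---shrinking the initial patch, the initial data and $\mu,\rho$ slightly and using the stability of solutions of \eqref{1.1} established in Part~I---reduces everything to the case in which all the initial inequalities are strict, so we may assume $\bar h(0)>h_0$, $\bar g(0)<-h_0$, $\bar u_0>u_0$ and $\bar v_0>v_0$ on $(-h_0,h_0)$. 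Set
$$t^*=\sup\bbb\{\,t\in[0,T]:\ \bar g\le g\ \hbox{and}\ h\le\bar h\ \hbox{on}\ [0,t]\,\bbb\}\in(0,T],$$
and suppose for contradiction that $t^*<T$.

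By continuity at least one boundary touches at $t^*$; say $h(t^*)=\bar h(t^*)=:\xi$, the other case being symmetric, so that $[g(t),h(t)]\subseteq[\bar g(t),\bar h(t)]$ for $t\in[0,t^*]$. On this nested cylinder I would first note that $\bar u\ge0$ and $\bar v\ge0$ (each being a supersolution of its own equation with nonnegative data, e.g. by the argument behind Lemma~\ref{l2.2} for $\bar u$ and the parabolic maximum principle for $\bar v$). Then, using $\bar u\ge0$, $J\ge0$ and $[g,h]\subseteq[\bar g,\bar h]$ to get $\int_{\bar g}^{\bar h}J\bar u\ge\int_g^h J\bar u$, and discarding the nonnegative term $buv$, the difference $\eta:=\bar u-u$ is seen to satisfy on $D^{t^*}_{g,h}$
$$\eta_t\ge d_1\int_{g(t)}^{h(t)}\!J(x-y)\eta(t,y)\,\dy-d_1\eta+(a-u-\bar u)\eta,$$
with $\eta\ge0$ on the lateral boundaries and $\eta(0,\cdot)\not\equiv0$; since $(g,h)\in\mathbb G^{t^*}\times\mathbb H^{t^*}$, Lemma~\ref{l2.2} gives $\eta>0$ in $D^{t^*}_{g,h}$. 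Likewise $w:=\bar v-v$ satisfies $w_t\ge d_2w_{xx}+(1-v-\bar v)w$ on $D^{t^*}_{g,h}$ with nonnegative parabolic boundary data and $w(0,\cdot)\not\equiv0$, so $w>0$ in $D^{t^*}_{g,h}$ by the strong maximum principle. Hence $u\le\bar u$ and $v\le\bar v$ on $\overline{D^{t^*}_{g,h}}$, with strict interior inequalities.

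Now compare the boundary speeds at $t^*$. At $\xi=h(t^*)=\bar h(t^*)$ we have $w\ge0$ on $\overline{D^{t^*}_{g,h}}$, $w>0$ inside, and $w(t^*,\xi)=\bar v(t^*,\xi)\ge0=v(t^*,\xi)$; since $p>3$ makes $v_x$ and $\bar v_x$ continuous up to the boundary, Hopf's boundary lemma applied to $w$ gives $-\bar v_x(t^*,\xi)>-v_x(t^*,\xi)$. For the nonlocal flux, the outer integration region $[\bar g(t^*),\bar h(t^*)]$ contains $[g(t^*),h(t^*)]$ (because $\bar h(t^*)=h(t^*)$), the inner region $[h(t^*),\infty)$ is unchanged, and $0\le u\le\bar u$ there, so
$$\int_{\bar g(t^*)}^{\bar h(t^*)}\!\!\int_{\bar h(t^*)}^{\infty}\!J(x-y)\bar u(t^*,x)\,\dy\,\dx\ \ge\ \int_{g(t^*)}^{h(t^*)}\!\!\int_{h(t^*)}^{\infty}\!J(x-y)u(t^*,x)\,\dy\,\dx.$$
Adding the two contributions, $\bar h'(t^*)>h'(t^*)$, so $\bar h-h$ is strictly increasing at $t^*$; as $(\bar h-h)(t^*)=0$, this forces $h(t)>\bar h(t)$ for $t$ slightly below $t^*$, contradicting the definition of $t^*$. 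Thus $\bar g\le g$ and $h\le\bar h$ on all of $[0,T]$; rerunning the comparison of the previous paragraph with $t^*$ replaced by $T$ yields $u\le\bar u$ and $v\le\bar v$ on $\overline{D^T_{g,h}}$, and letting the perturbation parameter go to $0$ completes the proof.

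The crux is the velocity comparison at the touching time: one must upgrade the merely weak ordering of the boundaries into a \emph{strict} ordering of their speeds, and this is exactly where the two diffusions interact---Hopf's lemma controlling the local flux $-\mu v_x$, and the monotonicity of the double $J$-integral under domain inclusion controlling the nonlocal flux. The preliminary perturbation is what makes this work, because it produces the strict interior inequalities $\eta>0$, $w>0$ (ruling out the degenerate situation $v\equiv\bar v$) and removes the possible degeneracy of the strict ordering at $t=0$; carrying that perturbation out cleanly for the nonlocal equation, and justifying the passage to the limit, is the one place where some genuine care is required.
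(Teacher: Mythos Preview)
Your sketch follows exactly the standard first-crossing-time argument (perturb to strict inequalities, define $t^*$, apply the nonlocal maximum principle Lemma~\ref{l2.2} to $\bar u-u$ and the parabolic strong maximum principle plus Hopf to $\bar v-v$, then compare boundary speeds), which is precisely the route taken in \cite[Lemma~5.1]{GW12}, \cite[Theorem~3.1]{CDLL} and \cite[Lemma~3.1]{WZjdde17} that the paper cites in place of a proof. The paper itself supplies no details whatsoever beyond that citation, so your outline is already more explicit than what appears there and is in full agreement with it.
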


 \begin{lemma}[Comparison principle]\label{l3.5}\,In Lemma $\ref{l3.4}$, if we replace the second inequality of \eqref{3.2} by
 \[\bar v_t\ge d_2\bar v_{xx}+\bar v(1-\bar v+c\bar u), \ \ (t,x)\in D^T_{\bar g,\bar h},\]
and let $(u, v, g, h)$ be the unique solution of \eqref{1.1} in there $(f_1, f_2)$ satisfies \eqref{1a.3}, then the conclusion is still true.
 \end{lemma}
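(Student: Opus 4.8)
The plan is to run the proof of Lemma~\ref{l3.4} essentially verbatim and to isolate the single place where the prey--predator structure forces a change. That proof has two ingredients: (a) a ``first touching time'' contradiction showing the moving domains stay nested, $\bar g(t)\le g(t)<h(t)\le\bar h(t)$ on $[0,T]$; and (b) maximum principles on the overlap region giving $u\le\bar u$ (from the nonlocal Maximum Principle, Lemma~\ref{l2.2}) and $v\le\bar v$ (from the classical parabolic comparison principle). The crucial observation is that replacing the $\bar v$-inequality does \emph{not} affect the $\bar u$-inequality or the two free-boundary inequalities in \eqref{3.2}: since $f_1(u,v)=u(a-u-bv)$ is nonincreasing in $v$ and the hypothesis on $\bar u$ has already discarded the $-b\bar v$ term, $\bar u$ is a supersolution of the $u$-equation regardless of $v$, exactly as in Lemma~\ref{l3.4}. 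Hence the $u$-part of the argument, and in particular the bound $u\le\bar u$ on $\overline{D^{t^*}_{g,h}}$ up to any first touching time $t^*$, goes through unchanged: one applies Lemma~\ref{l2.2} to $\psi=\bar u-u$, which satisfies $\psi_t\ge d_1\int_{g(t)}^{h(t)}J(x-y)\psi\,{\rm d}y-d_1\psi+(a-\bar u-u)\psi$ with $\psi\ge0$ on the parabolic boundary.

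The only genuine difference is step (b) for $v$. In the competition case $\bar v_t\ge d_2\bar v_{xx}+\bar v(1-\bar v)\ge d_2\bar v_{xx}+\bar v(1-\bar v-cu)$ for every $u\ge0$, so $\bar v$ is automatically a supersolution of the $v$-equation; but here $f_2(u,v)=v(1-v+cu)$ is increasing in $u$, and the hypothesis $\bar v_t\ge d_2\bar v_{xx}+\bar v(1-\bar v+c\bar u)$ only yields $\bar v_t\ge d_2\bar v_{xx}+\bar v(1-\bar v+cu)$ \emph{after} we know $u\le\bar u$. I would therefore sequence the estimates: first obtain $u\le\bar u$ as above, and also the positivity $\bar v\ge0$ (by applying the scalar parabolic maximum principle to $\bar v\in W^{1,2}_p$, legitimate since $1-\bar v+c\bar u\in L^\infty$ on the relevant compact set); only then set $w=\bar v-v$, which now satisfies a linear parabolic inequality $w_t-d_2w_{xx}\ge(1+cu-\bar v-v)w$ with bounded coefficient and $w\ge0$ on the time-dependent parabolic boundary, whence $v\le\bar v$ on $\overline{D^{t^*}_{g,h}}$ by the classical comparison principle.

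With $u\le\bar u$ and $v\le\bar v$ in hand on the overlap up to a first touching time $t^*\in(0,T]$ (say $h(t^*)=\bar h(t^*)$ while $g>\bar g$, $h<\bar h$ on $(0,t^*)$; the $g$-side is symmetric, and if $\bar h(0)=h_0$ or $\bar g(0)=-h_0$ one first perturbs the data to make the initial inequalities strict and passes to the limit afterwards), the contradiction is obtained as usual: $h'(t^*)\ge\bar h'(t^*)$ from the definition of $t^*$, but since $w=\bar v-v\ge0$ vanishes at the boundary point $(t^*,h(t^*))$, Hopf's lemma gives $\bar v_x(t^*,h(t^*))<v_x(t^*,h(t^*))$, hence $-\mu\bar v_x(t^*,h(t^*))>-\mu v_x(t^*,h(t^*))$; and $u\le\bar u$, $u\ge0$ together with the nesting $[g(t^*),h(t^*)]\subset[\bar g(t^*),\bar h(t^*)]$ (equal on the right) give $\rho\int_{g(t^*)}^{h(t^*)}\!\int_{h(t^*)}^\infty J(x-y)u\le\rho\int_{\bar g(t^*)}^{\bar h(t^*)}\!\int_{\bar h(t^*)}^\infty J(x-y)\bar u$. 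Adding these shows $\bar h'(t^*)>h'(t^*)$, a contradiction. Therefore the domains stay nested on $[0,T]$, and then $u\le\bar u$, $v\le\bar v$ hold on all of $D^T_{g,h}$, while $g\ge\bar g$, $h\le\bar h$ come with the nesting.

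I expect the main obstacle to be precisely this sequencing in the coupled $v$-estimate: one must have $u\le\bar u$ (and $\bar v\ge0$) established on a large enough time interval before invoking them in the $\bar v$-supersolution inequality, and one must check that $w=\bar v-v$ has the correct sign on the moving lateral boundaries. The free-boundary bookkeeping in the $h'/g'$ comparison---and, in the borderline case where the integral terms happen to be equal, falling back on the approximation with strict initial inequalities---is routine but needs the usual care.
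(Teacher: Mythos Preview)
Your proposal is correct and follows essentially the same approach that the paper intends: the paper itself gives no proof, merely stating that Lemmas~\ref{l3.4} and~\ref{l3.5} ``can be proved by the combination of proofs of \cite[Lemma 5.1]{GW12}, \cite[Theorem 3.1]{CDLL} and \cite[Lemma 3.1]{WZjdde17}'' and omitting the details. You have correctly identified the one substantive change needed for the prey--predator coupling---the sequencing in which $u\le\bar u$ must be established first (via the nonlocal maximum principle, which is insensitive to $v$ because the $\bar u$-hypothesis already drops the $-b\bar v$ term) before it can be fed into the $\bar v$-inequality to obtain $v\le\bar v$---and the remainder of your outline (first-touching-time argument, Hopf at the contact point, perturbation to strict data) is the standard bookkeeping from the cited references.
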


Lemmas \ref{l3.4} and \ref{l3.5} can be proved by the combination of proofs of
\cite[Lemma 5.1]{GW12}, \cite[Theorem 3.1]{CDLL} and \cite[Lemma 3.1]{WZjdde17}.
We omit the details here.

\subsection{Some related eigenvalue problems}

Here we recall some results on the principal eigenvalue of linear
operator $\mathcal{L}_{\Omega}+\theta: C(\bar\Omega)\mapsto C(\bar\Omega)$ defined by
 $$\left(\mathcal{L}_{\Omega}+\theta\right)\varphi:= d\left(\int_{\Omega}J(x-y)
 \varphi(y){\rm d}y-\varphi\right)+\theta(x)\varphi,$$
where $\Omega$ is an open interval in $\mathbb{R}^n$, possibly unbounded,
$\theta\in C(\bar\Omega)$ and $J$ satisfies the condition {\bf(J)}.

Define the generalized principal eigenvalue of $\mathcal{L}_{\Omega}+\theta$:
  $$\lambda_p(\mathcal{L}_{\Omega}+\theta):=\inf\ \Big\{\lambda\in\mathbb R:  (\mathcal{L}_{\Omega}+\theta)\varphi\leq\lambda\varphi \ \mbox{ in }\Omega \ \mbox{ for some } \
  \varphi\in C(\bar\Omega),\,\varphi>0\Big\}.$$
As usual, if $\lambda_p(\mathcal{L}_{\Omega}+\theta)$ has a continuous and positive eigenfunction, i.e., there exists a continuous and positive function $\varphi_p$ such that $(\mathcal{L}_\Omega+\theta)\varphi_p=\lambda_p(\mathcal{L}_{\Omega}+\theta)\varphi_p(x)$, we call $\lambda_p(\mathcal{L}_{\Omega}+\theta)$ a {\it principal eigenvalue} of $\mathcal{L}_{\Omega}+\theta$.

Using the variational characterization of $\lambda_p(\mathcal{L}_{\Omega}+\theta)$
(see, e.g., \cite{7-BJFA16}):
 \[\lambda_p(\mathcal{L}_{\Omega}+\theta)=\sup_{0\not\equiv \varphi\in L^2(\Omega)}
 \frac{d\dd\int_\Omega\int_\Omega J(x-y)\varphi(y)\varphi(x){\rm d}y{\rm d}x+\int_\Omega(\theta(x)-d)\varphi^2(x){\rm d}x}
 {\dd\int_\Omega\varphi^2(x){\rm d}x},\]
we can show that $\lambda_p(\mathcal{L}_{\Omega}+\theta)$ is strictly increasing in $\theta(x)$,
i.e., $\theta_1(x)\le \theta_2(x)$ and $\theta_1(x)\not\equiv\theta_2(x)$ implies
 \[\lambda_p(\mathcal{L}_{\Omega}+\theta_1)<\lambda_p(\mathcal{L}_{\Omega}+\theta_2).\]
  \begin{enumerate}[leftmargin=3em]
\item[{\bf(F)}] $f\in C(\mathbb R\times[0,\infty))$ and is differentiable with respect to $u$, $f_u(\cdot,0)$ is locally Lipschitz continuous in $\mathbb{R}$, $f(\cdot,0)\equiv 0$ and $f(x, u)/u$ is strictly decreasing with respect to $u\in\mathbb{R}^+$. Moreover, there exists $M>0$ such that $f(x,u)<0$ for all $u\ge M$ and all $x\in \mathbb{R}$.
\end{enumerate}

We consider the problem
 \bes\label{3.3}
 \left\{\begin{aligned}
 &u_t=d\dd\left(\int_\Omega J(x-y)u(t,y){\rm d}y-u\right)+f(x, u),&&  t>0,~x\in\Omega,\\
  &u(0,x)=u_0(x),\ \  && x\in\Omega.
\end{aligned}\right.
\ees

\begin{proposition}{\rm(\cite{10-Bjmaa07, 19-Cjde10})}\label{p3.4}\,Suppose {\bf(J)}
and {\bf(F)} hold. Then \eqref{3.3} admits a unique positive steady state $u_{\Omega}\in C(\bar\Omega)$ if and only if
  $$\lambda_p(\mathcal{L}_{\Omega}+ f_u(x,0))>0.$$
Moreover, for $u_0\in C(\bar\Omega)$ and $u_0\ge,\,\not\equiv0$, the unique solution $u(t,x)$ of \eqref{3.3} satisfies $\dd\lim_{t\to\infty}\|u(t,\cdot)-u_{\Omega}(\cdot)\|_{C(\bar\Omega)}=0$ if $\lambda_p(\mathcal{L}_{\Omega}+f_u(x,0))>0$, while $\dd\lim_{t\to\infty}\|u(t,\cdot)\|_{C(\bar\Omega)}=0$ if $\lambda_p(\mathcal{L}_{\Omega}+ f_u(x,0))\leq 0$.\end{proposition}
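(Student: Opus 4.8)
The plan is to split the statement into (i) existence and uniqueness of a positive steady state and (ii) the large-time behaviour of the parabolic flow, and to run both on three ingredients: the comparison principle for the scalar equation \eqref{3.3} (a routine consequence of the nonlocal maximum principles, cf. Lemmas \ref{l2.2}--\ref{l3.3}), the strict monotonicity of $\theta\mapsto\lambda_p(\mathcal{L}_\Omega+\theta)$ recorded above, and the sub-homogeneity built into {\bf(F)}, namely that $u\mapsto f(x,u)/u$ is strictly decreasing on $(0,\infty)$ with $f(x,u)/u\to f_u(x,0)$ as $u\to0^+$.

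Assume first $\lambda_p(\mathcal{L}_\Omega+f_u(x,0))>0$. When $\Omega$ is a bounded interval this is a genuine principal eigenvalue with continuous positive eigenfunction $\varphi_p$, and $\underline u:=\varepsilon\varphi_p$ is a stationary sub-solution for all small $\varepsilon>0$, since $\mathcal{L}_\Omega\underline u+f(x,\underline u)=\underline u\,\big(\lambda_p(\mathcal{L}_\Omega+f_u(x,0))-f_u(x,0)+f(x,\varepsilon\varphi_p)/(\varepsilon\varphi_p)\big)$ and the bracket converges to $\lambda_p(\mathcal{L}_\Omega+f_u(x,0))>0$ as $\varepsilon\to0$; while the constant $M$ of {\bf(F)} is a stationary super-solution because $\int_\Omega J(x-y)\,{\rm d}y\le1$ gives $\mathcal{L}_\Omega M\le0$ and $f(x,M)<0$. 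Fixing $\varepsilon$ with $\varepsilon\varphi_p\le M$, the solution of \eqref{3.3} issued from $M$ is nonincreasing in $t$, stays above the nondecreasing orbit issued from $\varepsilon\varphi_p$, hence above $\varepsilon\varphi_p>0$, so it decreases to a positive steady state $u_\Omega\in C(\bar\Omega)$. Uniqueness follows from a sliding argument: given two positive steady states $u_1,u_2$, put $t^\ast=\inf\{t\ge1:tu_2\ge u_1\text{ on }\bar\Omega\}$; if $t^\ast>1$ there is $x_0$ with $t^\ast u_2(x_0)=u_1(x_0)$, at which $\mathcal{L}_\Omega u_1(x_0)\le t^\ast\mathcal{L}_\Omega u_2(x_0)$ while strict monotonicity of $f(x_0,\cdot)/\cdot$ at $u_1(x_0)>u_2(x_0)$ forces $f(x_0,u_1(x_0))/u_1(x_0)<f(x_0,u_2(x_0))/u_2(x_0)$, and adding these contradicts $\mathcal{L}_\Omega u_i+f(\cdot,u_i)=0$; hence $u_2\ge u_1$, and by symmetry $u_1\equiv u_2$. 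Conversely, if a positive steady state $u_\Omega$ existed while $\lambda_p(\mathcal{L}_\Omega+f_u(x,0))\le0$, then $u_\Omega$ is a positive eigenfunction, so $\lambda_p(\mathcal{L}_\Omega+f(x,u_\Omega)/u_\Omega)=0$; but $f(x,u_\Omega)/u_\Omega<f_u(x,0)$ wherever $u_\Omega>0$, and strict monotonicity of $\lambda_p$ then gives $0<\lambda_p(\mathcal{L}_\Omega+f_u(x,0))\le0$, a contradiction. This proves the equivalence.

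For the asymptotics, every solution satisfies $0<u(t,\cdot)\le\max\{M,\|u_0\|_\infty\}$ for $t>0$ (the right-hand constant is a super-solution; positivity from the maximum principle since $u_0\not\equiv0$). If $\lambda_p(\mathcal{L}_\Omega+f_u(x,0))>0$, I would squeeze $u$ between two monotone orbits: picking $\varepsilon$ with $\varepsilon\varphi_p\le u(1,\cdot)$ (possible as $u(1,\cdot)$ is continuous and strictly positive on compact $\bar\Omega$), the orbit $\underline U$ from $\varepsilon\varphi_p$ increases to a positive steady state, necessarily $u_\Omega$, and the orbit $\overline U$ from the constant $\max\{\|u_0\|_\infty,\|u_\Omega\|_\infty,M\}$ decreases to $u_\Omega$; since $\underline U(t,\cdot)\le u(t+1,\cdot)\le\overline U(t+1,\cdot)$ and both monotone convergences are uniform on $\bar\Omega$ by Dini's theorem, $u(t,\cdot)\to u_\Omega$ in $C(\bar\Omega)$. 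If $\lambda_p(\mathcal{L}_\Omega+f_u(x,0))\le0$, I would use a vanishing perturbation: for $\eta>0$ one has $\lambda_p(\mathcal{L}_\Omega+f_u(x,0)+\eta)=\lambda_p(\mathcal{L}_\Omega+f_u(x,0))+\eta>0$, so by the case already proved the equation with reaction $f(x,u)+\eta u$ (still of type {\bf(F)}) has a unique positive steady state $u_\eta$ attracting every nontrivial nonnegative datum; since $f(x,u)\le f(x,u)+\eta u$, $u$ is a sub-solution of that equation, whence $\limsup_{t\to\infty}\|u(t,\cdot)\|_{C(\bar\Omega)}\le\|u_\eta\|_{C(\bar\Omega)}$. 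It remains to send $\eta\to0^+$: any subsequential limit of $\{u_\eta\}$ is a nonnegative steady state of \eqref{3.3}, which by the nonexistence just established must be $\equiv0$, so $u(t,\cdot)\to0$ in $C(\bar\Omega)$.

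The step I expect to be the main obstacle is making the monotone-orbit machinery fully rigorous when $\Omega$ is unbounded: with no regularising effect available, the pointwise monotone limits of $\underline U$ and $\overline U$ must be identified as steady states by passing to the limit in the integral form of \eqref{3.3} via dominated convergence rather than by parabolic compactness, and when $\lambda_p(\mathcal{L}_\Omega+f_u(x,0))$ is only a generalised principal eigenvalue without a positive eigenfunction the sub-solution $\varepsilon\varphi_p$ has to be replaced by steady states on an exhausting sequence of bounded subintervals together with the continuity of $\lambda_p$ along that exhaustion. A secondary technical point is the uniform-in-$x$ convergence $f(x,s)/s\to f_u(x,0)$ used to verify the sub-solution inequality, which is supplied by the local Lipschitz hypothesis on $f_u(\cdot,0)$ in {\bf(F)} restricted to bounded pieces of $\Omega$.
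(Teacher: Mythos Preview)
The paper does not give its own proof of Proposition~\ref{p3.4}; it is quoted as a known result from \cite{10-Bjmaa07, 19-Cjde10} and used as a black box thereafter. So there is no ``paper's proof'' to compare against, and your proposal is in effect a reconstruction of the arguments in those references.

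That said, your outline is the standard one and is essentially correct. The sub/super-solution pair $(\varepsilon\varphi_p,M)$, the sliding uniqueness argument exploiting strict monotonicity of $u\mapsto f(x,u)/u$, and the squeezing of a general orbit between monotone orbits with Dini's theorem on compact $\bar\Omega$ are exactly the mechanisms used in \cite{10-Bjmaa07}. The converse direction via $\lambda_p(\mathcal{L}_\Omega+f(x,u_\Omega)/u_\Omega)=0$ combined with the strict monotonicity of $\theta\mapsto\lambda_p(\mathcal{L}_\Omega+\theta)$ is also the right idea. In the extinction case your $\eta$-perturbation works, and the passage $u_\eta\downarrow 0$ can be made clean by noting that $\eta\mapsto u_\eta$ is monotone (comparison), so the pointwise limit is a nonnegative steady state of the unperturbed problem and hence identically zero; Dini then upgrades this to uniform convergence on compact $\bar\Omega$.

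The two caveats you flag are genuine. For unbounded $\Omega$, the lack of a positive eigenfunction and the failure of Dini are real issues; the cited papers handle this by exhausting $\Omega$ with bounded subintervals and using the continuity/monotonicity of $\lambda_p$ along the exhaustion, precisely as you anticipate. For the present paper this is harmless: every invocation of Proposition~\ref{p3.4} in Sections~3--4 is on a bounded interval $(g_\infty^{+\varepsilon},h_\infty^{-\varepsilon})$ or $(g_\infty,h_\infty)$, where your argument goes through without the extra machinery.
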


\begin{proposition}{\rm(\cite[Proposition 3.4]{CDLL})}\label{p3.7}\, Assume that
the condition {\bf(J)} holds, $\theta_0$ is a constant and $-\infty<\ell_1<\ell_2
<\infty$. Then the following hold true:

\sk{\rm(i)}\, $\lambda_p(\mathcal{L}_{(\ell_1, \ell_2)}+\theta_0)$ is strictly increasing and
continuous in $\ell:=\ell_2-\ell_1$,

\mk{\rm(ii)}\, $\dd\lim_{\ell_2-\ell_1\to\infty}\lambda_p(\mathcal{L}_{(\ell_1, \ell_2)}+  \theta_0)=\theta_0$,

\mk{\rm(iii)}\, $\dd\lim_{\ell_2-\ell_1\to 0}\lambda_p(\mathcal{L}_{(\ell_1, \ell_2)}+  \theta_0)=\theta_0-d$.
\end{proposition}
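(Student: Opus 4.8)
The plan is to reduce everything to elementary manipulations of the Rayleigh quotient in the variational characterization recalled above, together with a single compactness argument for the principal eigenfunctions. Throughout write $\ell=\ell_2-\ell_1$ and, for $0\not\equiv\varphi\in L^2(0,\ell)$,
\[Q_\ell(\varphi):=\frac{d\int_0^\ell\!\int_0^\ell J(x-y)\varphi(y)\varphi(x)\,{\rm d}y\,{\rm d}x+(\theta_0-d)\int_0^\ell\varphi^2\,{\rm d}x}{\int_0^\ell\varphi^2\,{\rm d}x}.\]
First I would make two reductions. Since $\theta_0$ is a constant, $\lambda_p(\mathcal{L}_{(\ell_1,\ell_2)}+\theta_0)=\theta_0+\lambda_p(\mathcal{L}_{(\ell_1,\ell_2)})$, and since $J(x-y)$ depends only on $x-y$ the quotient is translation invariant, so the quantity depends only on $\ell$; call it $\Lambda(\ell)=\sup_\varphi Q_\ell(\varphi)$. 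Next, $K_\ell\varphi:=\int_0^\ell J(\cdot-y)\varphi(y)\,{\rm d}y$ is a compact (Hilbert--Schmidt, as $J$ is bounded on the bounded square $(0,\ell)^2$), self-adjoint, positivity-improving (since $J(0)>0$ and $J$ is continuous) operator on $L^2(0,\ell)$, so by Krein--Rutman its spectral radius $\mu_1(\ell)$ is a simple eigenvalue with a positive continuous eigenfunction $\varphi_\ell$, $\mu_1(\ell)>0$ because $Q_\ell(1)>\theta_0-d$, and $\Lambda(\ell)=\theta_0-d+d\mu_1(\ell)$. In particular $\Lambda(\ell)>\theta_0-d$ for all $\ell$, so the supremum defining $\Lambda(\ell)$ is attained (by $\varphi_\ell$) and $\Lambda(\ell)+d-\theta_0>0$.

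For monotonicity I would note that if $0<\ell<\ell'$ then extending any test function on $(0,\ell)$ by zero to $(0,\ell')$ leaves $Q$ unchanged, so $\Lambda(\ell)\le\Lambda(\ell')$. For strictness: if $\Lambda(\ell)=\Lambda(\ell')$, the zero-extension $\tilde\varphi_\ell$ maximizes $Q_{\ell'}$, hence solves its Euler--Lagrange equation $dK_{\ell'}\tilde\varphi_\ell-d\tilde\varphi_\ell+\theta_0\tilde\varphi_\ell=\Lambda(\ell')\tilde\varphi_\ell$; dividing by $\Lambda(\ell')+d-\theta_0>0$ and using $\tilde\varphi_\ell\ge0$, $J(0)>0$, continuity of $J$ and connectedness of $(0,\ell')$ forces $\tilde\varphi_\ell>0$ on $(0,\ell')$, contradicting $\tilde\varphi_\ell\equiv0$ on $(\ell,\ell')$. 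This establishes (i) except for continuity.

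For the two limits I would use only crude two-sided bounds. Replacing $\varphi$ by $|\varphi|$ never decreases $Q_\ell$, so the supremum is over $\varphi\ge0$; dropping the (then nonnegative) double integral gives $\Lambda(\ell)\ge\theta_0-d$, while $\int_0^\ell\!\int_0^\ell J(x-y)\varphi\varphi\le\int_0^\ell\big(\int_0^\ell J(x-y){\rm d}y\big)\varphi^2\le\int_0^\ell\varphi^2$ gives $\Lambda(\ell)\le\theta_0$, for every $\ell>0$. For (iii), Cauchy--Schwarz gives $\int_0^\ell\!\int_0^\ell J(x-y)\varphi\varphi\le\|J\|_\infty(\int_0^\ell|\varphi|)^2\le\|J\|_\infty\ell\int_0^\ell\varphi^2$, hence $\theta_0-d\le\Lambda(\ell)\le\theta_0-d+d\|J\|_\infty\ell\to\theta_0-d$ as $\ell\to0$. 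For (ii), testing with $\varphi\equiv1$ gives $\Lambda(\ell)\ge\theta_0-d+\frac{d}{\ell}\int_0^\ell\!\int_0^\ell J(x-y){\rm d}y\,{\rm d}x=\theta_0-d+d\int_0^1\big(F(\ell t)-F(\ell(t-1))\big){\rm d}t$, where $F$ is the distribution function of $J$; as $\ell\to\infty$ the integrand tends to $1$ on $(0,1)$, and dominated convergence gives the lower bound $\theta_0$, which together with $\Lambda(\ell)\le\theta_0$ yields $\Lambda(\ell)\to\theta_0$.

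It remains to prove $\Lambda$ is continuous, and this is where the real work lies; by monotonicity one-sided limits exist. Left continuity is easy: $Q_{\ell_0}$ is continuous on $L^2(0,\ell_0)\setminus\{0\}$, so for $\epsilon>0$ the truncation $\varphi_{\ell_0}\mathbf 1_{(0,\ell_0-\eta)}$ has $Q_{\ell_0}>\Lambda(\ell_0)-\epsilon$ for small $\eta$, and it is an admissible test function for every $\ell\in(\ell_0-\eta,\ell_0)$, giving $\liminf_{\ell\uparrow\ell_0}\Lambda(\ell)\ge\Lambda(\ell_0)$. Right continuity is the main obstacle: given $\ell_n\downarrow\ell_0$, normalize $\|\varphi_{\ell_n}\|_{L^2(0,\ell_n)}=1$; the eigen-identity gives $\|\varphi_{\ell_n}\|_\infty\le d\|\varphi_{\ell_n}\|_{L^1}/(\Lambda(\ell_n)+d-\theta_0)$, bounded uniformly since $\Lambda(\ell_n)\ge\Lambda(\ell_0)>\theta_0-d$, and the same identity makes the $\varphi_{\ell_n}$ uniformly Lipschitz because $J$ is Lipschitz. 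Arzela--Ascoli yields a uniform limit $\varphi_*\ge0$ on $[0,\ell_0]$; the sup-bound forces $\int_{\ell_0}^{\ell_n}\varphi_{\ell_n}^2\le(\ell_n-\ell_0)\|\varphi_{\ell_n}\|_\infty^2\to0$, so no mass escapes the shrinking strip $(\ell_0,\ell_n)$ and $\|\varphi_*\|_{L^2(0,\ell_0)}=1$, in particular $\varphi_*\not\equiv0$. Passing to the limit in the equation (the part over $(\ell_0,\ell_n)$ is $O(\ell_n-\ell_0)$) shows $\varphi_*$ is a positive solution of the eigenproblem on $(0,\ell_0)$ with eigenvalue $L_*:=\lim_n\Lambda(\ell_n)$; since the only eigenvalue admitting a positive eigenfunction is $\Lambda(\ell_0)$, we conclude $L_*=\Lambda(\ell_0)$. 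I expect this compactness-with-no-mass-loss step — rather than any single hard estimate — to be the crux; the rest is bookkeeping.
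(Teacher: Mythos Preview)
The paper does not prove this proposition at all; it is quoted verbatim from \cite[Proposition~3.4]{CDLL} and invoked as a black box. So there is no ``paper's own proof'' to compare against, and what you have supplied is a self-contained argument where the paper gives none.

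Your argument is essentially correct and follows the standard route one would expect: translation invariance reduces to dependence on $\ell$ only, the Rayleigh quotient gives monotonicity by zero-extension, and the limits (ii)--(iii) follow from the two-sided sandwich $\theta_0-d<\Lambda(\ell)\le\theta_0$ sharpened by the test function $\varphi\equiv 1$ (for $\ell\to\infty$) and the Cauchy--Schwarz bound $\mu_1(\ell)\le\|J\|_\infty\ell$ (for $\ell\to 0$). The strict monotonicity step is clean: a zero-extended maximizer would have to satisfy the Euler--Lagrange equation on the larger interval, and the positivity-improving property of $K_{\ell'}$ (from $J(0)>0$ and continuity) forces it to be strictly positive, a contradiction. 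For continuity, your left/right split via truncation and Arzel\`a--Ascoli is the natural way to do it, and your care about mass not leaking into the strip $(\ell_0,\ell_n)$ is exactly the point that needs attention; the uniform $L^\infty$ bound from the eigen-identity makes this work. One minor remark: the Lipschitz bound on $\varphi_{\ell_n}$ uses that $J\in C^{1-}(\mathbb R)$, which is part of the hypothesis {\bf(J1)} in this paper (the original CDLL assumes only continuity of $J$, so there continuity of $\ell\mapsto\Lambda(\ell)$ is handled slightly differently, via equicontinuity coming from uniform continuity of $J$ on compacta rather than a Lipschitz estimate); under the present paper's hypotheses your version is fine.
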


\begin{remark}\lbl{r3.6}\,Since $\theta_0$ is a constant, it follows easily from the
definition that $\lambda_p(\mathcal{L}_{(\ell_1, \ell_2)}+\theta_0)$ depends
only on $\ell:=\ell_2-\ell_1$, i.e.,
 \[\lambda_p(\mathcal{L}_{(\ell_1, \ell_2)}+\theta_0)=\lambda_p(\mathcal{L}_{(0, \ell)}+\theta_0) \ \ \
  {\rm with } \ \ell:=\ell_2-\ell_1.\]
 \end{remark}

\section{Spreading-vanishing dichotomy and long-time behavior}
\setcounter{equation}{0} {\setlength\arraycolsep{2pt}

To establish the spreading-vanishing dichotomy we first give some abstract propositions. Let $\dd\lim_{t\to\infty}h(t)=h_\infty$ and $\dd\lim_{t\to\infty}g(t)=g_\infty$. Clearly, $h_\infty,-g_\infty\le\infty$. In what follows, we always suppose that $f_1,f_2$ satisfy \eqref{1a.2} or \eqref{1a.3}.

\begin{lemma}{\rm(\cite[Proposition 2]{WZ-dcdsa18})}\label{l4.a}\, Let $d$, $C$, $\mu$ and $\eta_0$ be positive constants, $w\in W^{1,2}_p((0,T)\times(0,\eta(t)))$ and $w_0\in W^2_p(0,\eta_0)$ for some $p>1$ and any $T>0$, and $w_x\in C([0,\infty)\times(0,\eta(t)])$, $\eta\in C^1([0,\infty))$. If $(w,\eta)$ satisfies
  \bess\left\{\begin{array}{lll}
 w_t-d w_{xx}\geq -C w, &t>0,\ \ 0<x<\eta(t),\\[.5mm]
 w\ge 0,\ \ \ &t>0, \ \ x=0,\\[.5mm]
 w=0,\ \ \eta'(t)\geq-\mu w_x, \ &t>0,\ \ x=\eta(t),\\[.5mm]
 w(0,x)=w_0(x)\ge,\,\not\equiv 0,\ \ &x\in (0,\eta_0),\\[.5mm]
 \eta(0)=\eta_0,
 \end{array}\right.\eess
and $\dd\lim_{t\to\infty} \eta(t)=\eta_\infty<\infty$, $\dd\lim_{t\to\infty} \eta'(t)=0$,
   \bess
 \|w(t,\cdot)\|_{C^1([0,\,\eta(t)])}\leq M, \ \, \forall\, t>1
 \eess
for some constant $M>0$. Then $\dd\lim_{t\to\infty}\,\max_{0\leq x\leq \eta(t)}w(t,x)=0$.
\end{lemma}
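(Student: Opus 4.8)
The plan is to argue by contradiction; the decisive observation is that the hypotheses force the flux of $w$ at the free boundary to vanish, and then a compactness argument combined with Hopf's boundary lemma excludes $w$ from staying bounded away from $0$. First I would extract the elementary consequences of the data. Since $w\ge0$ and $w(t,\eta(t))=0$, we have $w_x(t,\eta(t))\le0$, so the relation $\eta'(t)\ge-\mu w_x(t,\eta(t))$ forces $\eta'(t)\ge0$; hence $\eta$ is nondecreasing, $\eta(t)\uparrow\eta_\infty<\infty$, and $[0,\eta(s)]\subseteq[0,\eta(t)]$ for $s\le t$. Moreover $0\le-\mu w_x(t,\eta(t))\le\eta'(t)\to0$, so $w_x(t,\eta(t))\to0$ as $t\to\infty$. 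This is the only place where the free-boundary condition and the hypothesis $\eta'(t)\to0$ enter; what remains is a parabolic argument on a domain that is asymptotically the fixed interval $(0,\eta_\infty)$.

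Suppose now the conclusion fails: there are $\epsilon_0>0$, $t_n\to\infty$ and $x_n\in[0,\eta(t_n)]$ with $w(t_n,x_n)\ge\epsilon_0$. Because $\|w(t,\cdot)\|_{C^1}\le M$ and $w(t_n,\eta(t_n))=0$, necessarily $\eta(t_n)-x_n\ge\epsilon_0/M$, so after passing to a subsequence $x_n\to x_0\le\eta_\infty-\epsilon_0/M<\eta_\infty$. Put $w_n(t,x):=w(t+t_n,x)$ and $\eta_n(t):=\eta(t+t_n)$, so that $\eta_n\to\eta_\infty$ and $\eta_n'\to0$ locally uniformly in $t$. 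Using the uniform bound $\|w_n(t,\cdot)\|_{C^1}\le M$ together with interior $L^p$ parabolic estimates for the differential inequality on unit-length time windows — legitimate because the right boundary $x=\eta_n(t)$ is asymptotically stationary and may be flattened — I would extract a further subsequence along which $w_n$ converges, locally in $t$ and up to $x=\eta_\infty$ (including first $x$-derivatives at the boundary), to an entire-in-time $w^*\ge0$ satisfying $w^*_t-dw^*_{xx}+Cw^*\ge0$ on $\mathbb R\times(0,\eta_\infty)$, $w^*(t,\eta_\infty)=0$ and $w^*_x(t,\eta_\infty)=0$ for all $t$ (the latter from $w_x(t,\eta(t))\to0$), and $w^*(0,x_0)\ge\epsilon_0>0$; in particular $w^*\not\equiv0$.

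The contradiction is then immediate: since $w^*$ is a nonnegative, not identically zero supersolution of $\partial_t-d\partial_{xx}+C$, the strong maximum principle gives $w^*(t,x)>0$ for all $t>0$ and $x\in(0,\eta_\infty)$, and Hopf's boundary lemma at the right endpoint $x=\eta_\infty$ (the interval on its left furnishing the interior-ball condition) yields $w^*_x(1,\eta_\infty)<0$, contradicting $w^*_x(1,\eta_\infty)=0$. Hence $\max_{[0,\eta(t)]}w(t,\cdot)\to0$.

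The step I expect to be the main obstacle is the compactness: because $w$ is only a supersolution, the uniform $C^1$-in-$x$ bound does not by itself give equicontinuity in $t$, so one must earn uniform parabolic estimates by localizing in time and exploiting the asymptotic flatness of the free boundary. A fully elementary alternative bypasses the limit: feed the interior bump $w(t_n,\cdot)\ge\epsilon_0/2$ — which lives on an interval of length $\ge\epsilon_0/(2M)$ a definite distance from $\eta(t_n)$ — as the initial datum of a subsolution, and near the slowly moving boundary compare $w$ from below with an explicit convex barrier of the form $M_*(\eta(t)-x)+P(\eta(t)-x)^{1+s}$ with $s\in(0,1)$, $P>0$; this is a genuine subsolution once $\eta'$ is small and forces $-w_x(t_n+1,\eta(t_n+1))\ge M_*>0$ uniformly in $n$, again contradicting $w_x(t,\eta(t))\to0$. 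In that route the delicate point is matching the barrier to $w$ on the inner edge $x=\eta(t)-\delta$ via a Harnack-type lower bound propagating the bump toward the free boundary.
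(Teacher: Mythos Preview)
The paper does not prove this lemma; it is simply quoted from \cite{WZ-dcdsa18}, so there is no in-paper argument to compare against. I comment only on your sketch.

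The mechanism you isolate is correct and is the one used in the literature: from $0\le -\mu w_x(t,\eta(t))\le \eta'(t)\to 0$ you get $w_x(t,\eta(t))\to 0$, and persistence of $w$ should then conflict with a Hopf-type lower bound for the boundary flux. The gap is in your primary route. You write that ``interior $L^p$ parabolic estimates for the differential inequality'' yield a subsequence $w_n\to w^*$ with $w^*$ still a supersolution and $(w^*)_x(\cdot,\eta_\infty)=0$. But $w$ is only a supersolution: the inequality $w_t-dw_{xx}+Cw\ge 0$ gives no equation for $w_t$ or $w_{xx}$, hence no uniform $L^p$ bound on them, and the hypothesis $\|w(t,\cdot)\|_{C^1}\le M$ says nothing about continuity in $t$. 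You therefore have no equicontinuity in time and cannot pass to a limit, let alone one in which the boundary gradient converges. This is not a technicality that flattening the free boundary cures.

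Your second route is the right one, but the step you flag as delicate is genuinely missing: a ``Harnack-type lower bound'' for $w$ itself is not available from a single-time bump, since the weak Harnack inequality for supersolutions needs a lower bound on a spacetime set. The clean fix is to slip a genuine \emph{solution} beneath $w$. From $w(t_n,\cdot)\ge\varepsilon_0/2$ on an interval of fixed length, let $z_n$ solve $z_t-dz_{xx}=-Cz$ on $\{t_n<t<t_n+1,\ 0<x<\eta(t)\}$ with zero Dirichlet data and initial datum a fixed nonnegative profile dominated by $w(t_n,\cdot)$; comparison gives $w\ge z_n$, hence $-w_x(t,\eta(t))\ge -(z_n)_x(t,\eta(t))$. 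The $z_n$ are solutions with uniformly bounded data on domains converging to $(0,1)\times(0,\eta_\infty)$, so they enjoy uniform Schauder estimates and, along a subsequence, converge to a nontrivial nonnegative solution $z^*$ with $z^*(t,\eta_\infty)=0$; Hopf's lemma yields $-(z^*)_x(1,\eta_\infty)>0$, forcing $\liminf_n\bigl(-w_x(t_n+1,\eta(t_n+1))\bigr)>0$ and the desired contradiction. Your barrier $M_*(\eta-x)+P(\eta-x)^{1+s}$ can replace the Hopf step once $z_n$ supplies the lower bound on the inner edge $x=\eta(t)-\delta$, but it cannot replace the solution comparison that produces that lower bound.
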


The following lemma provides an estimate for the solution component $v$.
\begin{lemma}\label{l4.1}
Let $(u,v,g,h)$ be the unique global solution of \eqref{1.1} and $h_\infty-g_\infty<\infty$ and $D_\infty=[0,\infty)\times[g(t),h(t)]$. Then there is $C>0$ such that
 \bes
\|v\|_{C^{\frac{1+\alpha}{2},1+\alpha}(D_\infty)}\le C,
  \label{4.1}\ees
and hence
\bes
\|v_x(t,h(t))\|_{C^{\frac \alpha 2}(\overline{\mathbb R}_+)}+\|v_x(t,g(t))\|_{C^{\frac \alpha 2}(\overline{\mathbb R}_+)}\le C.\label{4.2}
\ees
\end{lemma}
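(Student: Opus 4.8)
The plan is to flatten the moving region $\{g(t)<x<h(t)\}$ onto a fixed cylinder, derive a parabolic $L^p$ estimate for the flattened function that is \emph{uniform in $t$}, and then read off \eqref{4.1} via a Sobolev embedding and \eqref{4.2} by differentiating the change of variables. Two preliminary facts will be used throughout. First, since $h_\infty-g_\infty<\infty$, one has $2h_0\le h(t)-g(t)\le h_\infty-g_\infty=:L_0$ for all $t\ge0$. Second, from $0<u\le k_1$, $\int_\R J=1$ and the boundedness of the region, the two nonlocal flux terms in the equations for $h'(t)$ and $g'(t)$ in \eqref{1.1} are bounded by $\rho k_1L_0$; combined with \eqref{2.1} this produces a constant $C_0$, independent of $t$, with $0<h'(t)\le C_0$ and $-C_0\le g'(t)<0$. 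Of course $0<v\le k_2$ and $|f_2(u,v)|\le C_1$ on the range of the solution.

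Next I would set $y=\dfrac{2x-g(t)-h(t)}{h(t)-g(t)}\in[-1,1]$ and $w(t,y)=v\big(t,\tfrac12[(h(t)-g(t))y+g(t)+h(t)]\big)$. A direct computation turns the equation for $v$ (which carries no nonlocal term) into
\[
w_t=A(t)w_{yy}+B(t,y)w_y+f_2(u,v),\qquad w(t,\pm1)=0,\quad (t,y)\in Q:=(0,\infty)\times(-1,1),
\]
with $A(t)=4d_2(h(t)-g(t))^{-2}$ and $B(t,y)=\dfrac{g'(t)+h'(t)}{h(t)-g(t)}+y\,\dfrac{h'(t)-g'(t)}{h(t)-g(t)}$. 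By the first paragraph, $4d_2L_0^{-2}\le A(t)\le d_2h_0^{-2}$, $B$ is uniformly bounded, and $A'(t)=-8d_2(h'-g')(h-g)^{-3}$ is bounded; hence on the \emph{fixed} cylinder $Q$ the equation is uniformly parabolic, its coefficients are uniformly bounded, and its leading coefficient is uniformly Lipschitz in $t$.

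Then I would fix $p>3$ with $\alpha\le1-3/p$ and, for each $n\ge0$, apply the local (interior-in-time, up to the lateral boundary) $L^p$ estimate for the above equation on the slab $(\tfrac n2,\tfrac n2+2)\times(-1,1)$; since the coefficients are uniformly bounded, $A$ has a uniform modulus of continuity, and the cylinder is fixed, the resulting constant is independent of $n$, and one obtains $\|w\|_{W^{1,2}_p((\frac n2+1,\frac n2+2)\times(-1,1))}\le C\big(\|f_2(u,v)\|_{L^p}+\|w\|_{L^p}\big)\le C$, the last bound by the first paragraph; on the initial slab $(0,2)\times(-1,1)$ one uses instead the global $L^p$ estimate together with $v_0\in W^2_p$ and the homogeneous boundary values. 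As $p>3$, the parabolic embedding $W^{1,2}_p\hookrightarrow C^{\frac{1+\alpha}{2},1+\alpha}$ on each slab (with a uniform constant) and the patching of the overlapping slabs give $\|w\|_{C^{\frac{1+\alpha}{2},1+\alpha}(\overline Q)}\le C$. Because $(t,x)\mapsto(t,y(t,x))$ and its inverse are $C^1$ with first derivatives bounded above and below uniformly in $t$, this transfers to $\|v\|_{C^{\frac{1+\alpha}{2},1+\alpha}(D_\infty)}\le C$, i.e.\ \eqref{4.1} (the Hölder norm on the moving region $D_\infty$ being understood through this pullback). For \eqref{4.2} I would then use $v_x(t,h(t))=\dfrac{2}{h(t)-g(t)}\,w_y(t,1)$ and $v_x(t,g(t))=\dfrac{2}{h(t)-g(t)}\,w_y(t,-1)$: the traces $w_y(\cdot,\pm1)$ lie in $C^{\alpha/2}(\overline{\mathbb R}_+)$ by the estimate for $w$, while $t\mapsto\dfrac{2}{h(t)-g(t)}$ is bounded and Lipschitz on $[0,\infty)$ since $h-g\ge2h_0$ and $|h'-g'|\le2C_0$; the products therefore lie in $C^{\alpha/2}(\overline{\mathbb R}_+)$.

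The only real obstacle is the \emph{uniformity in $t$} of the $L^p$ bound in the third paragraph — that the constant does not deteriorate as $n\to\infty$. This is exactly where the two structural facts of the first paragraph enter: the lower bound $h(t)-g(t)\ge2h_0$ keeps the flattened equation uniformly parabolic with a uniformly Lipschitz leading coefficient, and the a priori bounds on $g',h'$ (coming from \eqref{2.1} and from the boundedness of the nonlocal fluxes) keep the drift $B$ uniformly bounded. Everything else — the flattening identities, the parabolic embedding, the patching of slabs, and the algebra in \eqref{4.2} — is routine.
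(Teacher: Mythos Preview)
Your proposal is correct and follows essentially the same route as the paper: bound $g',h'$ uniformly via \eqref{2.1} and the nonlocal flux estimate, straighten the moving domain to the fixed cylinder $(-1,1)$, observe that the resulting coefficients are uniformly bounded and the equation is uniformly parabolic, and then invoke the standard parabolic $L^p$/Schauder machinery (which the paper delegates to \cite{Wjfa16,WZjde18}) to obtain the uniform $C^{\frac{1+\alpha}2,1+\alpha}$ bound. Your explicit slab-by-slab $W^{1,2}_p$ argument with the embedding $W^{1,2}_p\hookrightarrow C^{\frac{1+\alpha}2,1+\alpha}$ for $p>3$ is precisely the content of the references the paper cites, so there is no meaningful difference in strategy.
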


\begin{proof}
Since the proof is similar to that of \cite[Theorem 2.1]{Wjfa16} and \cite[Theorem 2.2]{WZjde18}, we give the sketch of the proof and omit the details. It is easy to derive from \eqref{2.1} that
\bes
0< h'(t),-g'(t)\le \mu k_3+\rho k_1(h_\infty-g_\infty)=:C_0<\infty.\label{4.3a}
\ees
We straighten the free boundary. Similar to the above, set
$w(t,y)=u(t,x(t,y))$, $z(t,y)=v(t,x(t,y))$. Then
\bess
\left\{\begin{aligned}
&z_t=d_2\xi(t)z_{yy}+\zeta(t,y)z_y+f_2(w,z), &&t>0,~|y|<1,\\
&z(t,-1)=z(t,1)=0, &&t\ge0,\\
&z(0,y)=v_0(h_0y)=:z_0(y), &&|y|\le 1.
\end{aligned}\right.
 \eess
Due to \eqref{4.3a}, it is easy to get
$$\|\xi\|_{L^\infty(\mathbb{R}^+)}\le 1/h_0^2,\ \ \ \|\zeta\|_{L^\infty(\mathbb{R}^+\times \Sigma)}\le 2C_0/h_0, \ \ \ \|f^*_2\|_{L^\infty(\mathbb{R}^+\times \Sigma)}\le Lk_2,$$
 where $\Sigma=[-1,1]$. By using the arguments in the proofs of \cite[Theorem 2.1]{Wjfa16} and \cite[Theorem 2.2]{WZjde18} with some minor modifications, we can get the estimate \eqref{4.1}. This completes the proof.
\end{proof}

\begin{proposition}\lbl{p4.1}\,If $h_\infty-g_\infty<\infty$, then $\dd\lim_{t\rightarrow\infty}g'(t)=\lim_{t\rightarrow\infty}h'(t)=0$.
\end{proposition}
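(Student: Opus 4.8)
The plan is to write $h'(t)$ as a sum of two nonnegative terms, to show that each of them is uniformly continuous on $\overline{\mathbb R}_+$ and integrable over $[0,\infty)$, and then to invoke the elementary fact that a nonnegative, uniformly continuous function on $[0,\infty)$ with finite integral must tend to $0$ at infinity (otherwise it would be bounded below by a fixed positive constant on infinitely many disjoint intervals of a fixed length, contradicting integrability).

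Concretely, since $h\in C^1([0,\infty))$ is increasing with $h(t)\to h_\infty<\infty$, we have $\int_0^\infty h'(t)\,{\rm d}t=h_\infty-h_0<\infty$. From the fourth line of \eqref{1.1}, and using the symmetry of $J$, write $h'(t)=-\mu v_x(t,h(t))+\rho\,I(t)$ with
\[I(t):=\igh\!\int_{h(t)}^\infty\! J(x-y)u(t,x)\,{\rm d}y\,{\rm d}x=\igh K(h(t)-x)\,u(t,x)\,{\rm d}x,\qquad K(s):=\int_s^\infty\! J(z)\,{\rm d}z.\]
By \eqref{2.1} we have $-v_x(t,h(t))\ge 0$, and clearly $I(t)\ge 0$; hence $\int_0^\infty h'(t)\,{\rm d}t<\infty$ forces both $\int_0^\infty(-v_x(t,h(t)))\,{\rm d}t<\infty$ and $\int_0^\infty I(t)\,{\rm d}t<\infty$.

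It then remains to prove the two uniform-continuity statements. For the first term this is immediate from Lemma \ref{l4.1}: estimate \eqref{4.2} gives $t\mapsto v_x(t,h(t))\in C^{\alpha/2}(\overline{\mathbb R}_+)$, which is uniformly continuous. For $I(t)$ I would show it is Lipschitz in $t$ on $[0,\infty)$, using three ingredients: (a) the free boundaries move with uniformly bounded speed, $0<h'(t),-g'(t)\le C_0$ by \eqref{4.3a}; (b) $K$ is globally Lipschitz, since $K'=-J$ and $\sup_{\mathbb R}J<\infty$ by {\bf(J1)}; and (c) $u$ is Lipschitz in $t$, uniformly in $x$ on $D_\infty$, because the first equation of \eqref{1.1} together with the bounds \eqref{2.1} yield $|u_t|\le 2d_1k_1+\sup_{[0,k_1]\times[0,k_2]}|f_1|=:C_1<\infty$. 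Splitting $I(t_1)-I(t_2)$ into (i) the variation of $K(h(t)-x)$, controlled by (a)--(b); (ii) the variation of $u(t,x)$ over the common sub-interval of $[g(t_1),h(t_1)]$ and $[g(t_2),h(t_2)]$, controlled by (c) together with $h_\infty-g_\infty<\infty$; and (iii) the contribution of the symmetric difference of these two intervals, whose Lebesgue measure is at most $2C_0|t_1-t_2|$ and on which $0\le u\le k_1$ and $0\le K\le 1$, one obtains $|I(t_1)-I(t_2)|\le C|t_1-t_2|$.

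With both $-\mu v_x(t,h(t))$ and $\rho\,I(t)$ nonnegative, uniformly continuous and integrable on $[0,\infty)$, the elementary fact quoted above forces each of them to tend to $0$, whence $h'(t)\to 0$. The argument for $g'(t)\to 0$ is verbatim the same, now using \eqref{4.2} for $v_x(t,g(t))$ and the rewriting $\igh\!\int_{-\infty}^{g(t)}J(x-y)u(t,x)\,{\rm d}y\,{\rm d}x=\igh K(x-g(t))\,u(t,x)\,{\rm d}x$. The main obstacle is the global-in-time Lipschitz (or merely uniform-continuity) estimate for the nonlocal boundary term $I(t)$: everything hinges on having a modulus of continuity in time for $u$ that does not deteriorate as $t\to\infty$, which is precisely what the uniform bound on $u_t$, combined with \eqref{4.3a} and Lemma \ref{l4.1}, provides.
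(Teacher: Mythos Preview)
Your proof is correct and follows essentially the same route as the paper: both arguments show that $h'(t)$ is uniformly continuous by establishing H\"older continuity of $v_x(t,h(t))$ via Lemma~\ref{l4.1} and Lipschitz continuity of the nonlocal term $I(t)$ (the paper defers the latter to \cite[Proposition~4.1]{DWZ19}, while you spell it out using the uniform bound on $u_t$ and \eqref{4.3a}), and then invoke a Barbalat-type fact. Your extra step of splitting $h'$ into two nonnegative summands and treating each separately is harmless but unnecessary, since $h'>0$ already by \eqref{2.1}.
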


\begin{proof}\, It follows from \eqref{4.3a} that $g'(t)$ and $h'(t)$ are bounded. Let $$\varphi_1(t)=v_x(t,h(t)),\ \ \varphi_2(t)=\int_{g(t)}^{h(t)}\!\!\int_{h(t)}^{\infty}\!J(x-y)v(t,x){\rm d}y{\rm d}x.$$
By \eqref{4.2} we have that, for $t,s>0$,
\[|\varphi_1(t)-\varphi_1(s)|\le C_1|t-s|^{\frac\alpha 2}\]
Using the arguments in the proof of \cite[Proposition 4.1]{DWZ19}, one can show that, there is $C_2>0$ such that
  \[|\varphi_2(t)-\varphi_2(s)|\le C_2|t-s|,\ \ \ \forall\ t,s>0.\]
Note that $h'(t)=-\mu \varphi_1(t)+\rho \varphi_2(t)$. We see that $h'(t)$ is uniformly continuous in $[0,\infty)$. Therefore, $\dd\lim_{t\rightarrow\infty}h'(t)=0$ due to $\dd\lim_{t\rightarrow\infty}h(t)=h_\infty<\infty$. Similarly, $\dd\lim_{t\rightarrow\infty}g'(t)=0$.
\end{proof}

\begin{theorem}\lbl{t4.3}\, If $h_\infty-g_\infty<\infty$, then the solution $(u,v,g,h)$ of \eqref{1.1} satisfies
 $$\lim_{t\to\infty}\|u(t,\cdot)\|_{C([g(t),h(t)])}
 =\lim_{t\to\infty}\|v(t,\cdot)\|_{C([g(t),h(t)])}=0.$$
 \end{theorem}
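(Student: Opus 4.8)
To prove Theorem~\ref{t4.3} I would treat the two components in turn: first show $v(t,\cdot)\to 0$, exploiting that $v$ solves a \emph{local} parabolic equation and that the $\rho$--terms in the free boundary conditions are nonnegative, then feed this into the $u$--equation and invoke the scalar nonlocal logistic dichotomy of Proposition~\ref{p3.4}. Only a few structural facts enter: $f_1(u,v)=u(a-u-bv)$ is the same in \eqref{1a.2} and \eqref{1a.3}, $f_1(0,\cdot)\equiv 0$, and $f_2(u,v)=v\cdot(\text{bracket})$ with the bracket bounded on the range of the solution, so $f_2(u,v)\ge -Cv$ for some $C>0$; thus the argument does not distinguish the competition and predation cases.

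\emph{Step 1: $\|v(t,\cdot)\|_{C([g(t),h(t)])}\to 0$.} Since $h_\infty-g_\infty<\infty$, Lemma~\ref{l4.1} provides a time--independent bound $\|v(t,\cdot)\|_{C^1([g(t),h(t)])}\le M$, Proposition~\ref{p4.1} gives $h'(t),g'(t)\to 0$, and \eqref{2.1} gives $g(t)>g_\infty$, $h(t)<h_\infty$. I would apply Lemma~\ref{l4.a} with the \emph{fixed} interior point $x=0$ playing the role of the lemma's left endpoint: on $\{t\ge 1,\ 0<x<h(t)\}$ one has $v_t-d_2v_{xx}=f_2(u,v)\ge -Cv$, $v(t,0)\ge 0$, $v(t,h(t))=0$, and $h'(t)\ge -\mu v_x(t,h(t))$ because the $\rho$--integral in \eqref{1.1} is nonnegative; together with $h(t)\uparrow h_\infty<\infty$, $h'(t)\to 0$ and the $C^1$--bound, Lemma~\ref{l4.a} yields $\max_{0\le x\le h(t)}v(t,x)\to 0$. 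The point of keeping $x=0$ fixed is that no drift term is produced (in contrast to straightening onto $(0,h(t)-g(t))$, which would bring in $g'(t)v_x$). The reflection $x\mapsto -x$ interchanges the two free boundaries and, since $g'(t)\le -\mu v_x(t,g(t))$, the same lemma gives $\max_{g(t)\le x\le 0}v(t,x)\to 0$; combining the two yields Step~1.

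\emph{Step 2: $\|u(t,\cdot)\|_{C([g(t),h(t)])}\to 0$.} Extend $u(t,\cdot)$ by $0$ outside $[g(t),h(t)]$. Since $f_1(u,v)\le u(a-u)$ and $[g(t),h(t)]\subset(g_\infty,h_\infty)$, the extension is a subsolution on the fixed interval $(g_\infty,h_\infty)$ of the scalar nonlocal equation $U_t=d_1(\int_{g_\infty}^{h_\infty}J(x-y)U\,dy-U)+U(a-U)$; comparing with its solution $\bar U$ started from $\bar U(0,\cdot)\equiv k_1$ gives $u\le\bar U$. By Proposition~\ref{p3.4} (condition \textbf{(F)} holds for $U(a-U)$): if $\lambda_p(\mathcal{L}_{(g_\infty,h_\infty)}+a)\le 0$ then $\bar U\to 0$ uniformly and we are done. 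To exclude $\lambda_p(\mathcal{L}_{(g_\infty,h_\infty)}+a)>0$, I would argue by contradiction. By Step~1 there is $T_1$ with $v<\varepsilon$ for $t\ge T_1$, so $f_1(u,v)\ge u((a-b\varepsilon)-u)$ there; using the continuity of $\lambda_p$ in the interval length (Proposition~\ref{p3.7}(i)) together with its strict monotonicity in the potential, fix $\varepsilon$ small and an interval $I_0=(\tilde g,\tilde h)$ with $g_\infty<\tilde g<\tilde h<h_\infty$, with $\tilde h$ close to $h_\infty$, and with $\lambda_p(\mathcal{L}_{I_0}+(a-b\varepsilon))>0$. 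For large $t$, $I_0\subset(g(t),h(t))$, and dropping the nonlocal mass outside $I_0$ shows that on $I_0$ the function $u$ is a supersolution of $\underline u_t=d_1(\int_{I_0}J(x-y)\underline u\,dy-\underline u)+\underline u((a-b\varepsilon)-\underline u)$; comparison (cf.\ Lemma~\ref{l3.3}), with initial datum a small positive constant at a large time, plus Proposition~\ref{p3.4}, gives $\liminf_{t\to\infty}u(t,x)\ge U_{I_0}(x)$ uniformly on $\overline{I_0}$, where $U_{I_0}>0$ on all of $\overline{I_0}$ (a zero of this steady state would propagate through the nonlocal term and force $U_{I_0}\equiv 0$). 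Then, writing the $\rho$--flux of $h'$ as $\int_{g(t)}^{h(t)}u(t,x)\big(\int_{h(t)-x}^{\infty}J(z)\,dz\big)\,dx\ge\int_{\tilde h-\delta}^{\tilde h}u(t,x)\big(\int_{h(t)-x}^{\infty}J(z)\,dz\big)\,dx$, and noting that $h(t)-x\to 0$ uniformly on $(\tilde h-\delta,\tilde h)$ so the inner integral stays near $\int_0^{\infty}J=\frac12$ while $u\ge U_{I_0}/2>0$ there, this flux is bounded below by a positive constant; hence $h'(t)\ge\rho\cdot(\text{positive constant})$, contradicting $h'(t)\to 0$. Therefore $\lambda_p(\mathcal{L}_{(g_\infty,h_\infty)}+a)\le 0$ and $u\to 0$.

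\emph{Expected main obstacle.} Step~1 and the upper bound for $u$ are routine given the quoted results; the delicate point is ruling out a positive principal eigenvalue on the limiting interval. This requires promoting that eigenvalue into a \emph{uniform} positive lower bound for $u$ near the moving boundaries --- for which one needs $v\to 0$ (to control $-bv$), the continuity of $\lambda_p$ in the interval length, and the strict positivity up to the endpoints of the nonlocal steady state --- and then reading it off in the $\rho$--flux of $h'$. The one calculation needing real care is the geometric choice of $I_0$: its right endpoint must be pushed close enough to $h_\infty$ that the kernel weight $\int_{h(t)-x}^{\infty}J$ remains bounded below on $I_0$ as $t\to\infty$.
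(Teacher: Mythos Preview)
Your proposal is correct and follows essentially the same route as the paper: first obtain $v\to 0$ via Lemma~\ref{l4.a} (using $h'(t)\ge-\mu v_x(t,h(t))$ and the prerequisites from Lemma~\ref{l4.1} and Proposition~\ref{p4.1}), then rule out $\lambda_p(\mathcal{L}_{(g_\infty,h_\infty)}+a)>0$ by producing a uniform positive lower bound for $u$ near the free boundary (via Proposition~\ref{p3.4} on a slightly shrunk interval) and reading this off in the $\rho$--flux of $h'$, and finally compare $u$ with the nonlocal logistic on the fixed interval $(g_\infty,h_\infty)$. The only cosmetic differences are that the paper bounds the flux using \eqref{1.5} (local positivity $J>\delta_0$) rather than $\int_0^\infty J=\tfrac12$, and it compares $u\le\bar u$ on the moving domain via Lemma~\ref{l2.2} rather than extending $u$ by zero; your variants work equally well.
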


\begin{proof}\, Noticing $f_2(u,0)=0$, and $f_2(u, v)$ is locally Lipschitz continuous in $u,v\in\mathbb{R}^+$ and $0<u\le k_1$, $0<v\le k_2$, we can write
$f_2(u, v)=\varrho(t,x)v$ with $\varrho\in L^\infty$. Thanks to $h'(t)\ge-\mu v_x(t,h(t))$ and $g'(t)\le-\mu v_x(t,g(t))$. It can be deduced by Lemma \ref{l4.a} that
\bess
\lim_{t\to\infty}\|v(t,\cdot)\|_{C([g(t),h(t)])}=0.
\eess

We next show that
\bes
 \lambda_p(\mathcal{L}_{(g_\infty,h_\infty)}+a)\le0. \label{4.4a}
 \ees
To save spaces, for $\varepsilon>0$ we set
 \[h_\infty^{\pm\varepsilon}=h_\infty\pm\varepsilon, \ \ \ g_\infty^{\pm\varepsilon}=g_\infty\pm\varepsilon.\]
Assume on the contrary that $\lambda_p(\mathcal{L}_{(g_\infty,h_\infty)}+a)>0$. Clearly, there is $\varepsilon_1>0$ such that, for $\varepsilon\in(0,\varepsilon_1)$, $\lambda_p(\mathcal{L}_{(g_\infty^{+\varepsilon},\,h_\infty^{-\varepsilon})}+a-b\varepsilon)>0$. For such $\varepsilon>0$, one can find $T_\varepsilon>0$ such that
 \[h(t)>h_\infty^{-\varepsilon},\ \ \ g(t)<g_\infty^{+\varepsilon},\ \ \|v(t,\cdot)\|_{C([g(t),h(t)])}<\varepsilon,\ \ {\rm for}\ t>T_\varepsilon.\]
Then $u$ satisfies
 \bess
\left\{\begin{aligned}
&u_t\ge d_1\int_{g_\infty^{+\varepsilon}}^{h_\infty^{-\varepsilon}}\!\!J(x-y)u(t,y){\rm d}y-d_1u+u(a-u-b\varepsilon), &&t>T_\varepsilon,~x\in[g_\infty^{+\varepsilon},h_\infty^{-\varepsilon}],\\
&u(T_\varepsilon,x)=u(T_\varepsilon,x), &&x\in[g_\infty^{+\varepsilon},h_\infty^{-\varepsilon}].
\end{aligned}\right.
 \eess
 Consider the problem
\bes
\left\{\begin{aligned}
&w_t=d_1\int_{g_\infty^{+\varepsilon}}^{h_\infty^{-\varepsilon}}\!\!J(x-y)w(t,y){\rm d}y-d_1w+w(a-w-b\varepsilon), &&t>T_\varepsilon,~x\in[g_\infty^{+\varepsilon},h_\infty^{-\varepsilon}],\\
&w(T_\varepsilon,x)=u(T_\varepsilon,x), &&x\in[g_\infty^{+\varepsilon},h_\infty^{-\varepsilon}].
\end{aligned}\right.
 \label{4.4}
 \ees
Since $\lambda_p(\mathcal{L}_{(g_\infty,h_\infty)}+a-b\varepsilon)>0$, it follows from Proposition \ref{p3.4} that the solution $w_\varepsilon(t,x)$ of problem \eqref{4.4} converges to the unique steady state $W_\varepsilon(x)$ of \eqref{4.4} uniformly in $[g_\infty^{+\varepsilon},h_\infty^{-\varepsilon}]$ as $t\rightarrow\infty$. From Lemma \ref{l3.3} and a simple comparison argument, there holds that
\[u(t,x)\ge w_\varepsilon(t,x)\ \ {\rm for}\ \ t>T_\varepsilon\ \ {\rm and}\ \ x\in[g_\infty^{+\varepsilon},h_\infty^{-\varepsilon}].\]
Hence, there is $T_{1\varepsilon}>T_\varepsilon$ such that
 \[u(t,x)\ge \frac12 W_\varepsilon(x)>0\ \ \ {\rm for}\ \ t>T_{1\varepsilon}, \ x\in[g_\infty^{+\varepsilon},h_\infty^{-\varepsilon}].\]

Recall \eqref{1.5}, for $0<\varepsilon<\min\{\varepsilon_1,{\bar\varepsilon}/{2}\}$ and $t>T_{1\varepsilon}$, we obtain
\bess
h'(t)&\ge&\rho\int_{g(t)}^{h(t)}\!\!\int_{h(t)}^\infty\! J(x-y)u(t,x){\rm d}y{\rm d}x\ge\rho\int_{g_\infty^{+\varepsilon}}^{h_\infty^{-\varepsilon}}\!\!\int_{h_\infty}^\infty\! J(x-y)u(t,x){\rm d}y{\rm d}x\\
&\ge&\rho\int_{h_\infty^{-\bar\varepsilon/2}}^{h_\infty^{-\varepsilon}}\int_{h_\infty}^{h_\infty^{+\bar\varepsilon/2}} \delta_0\frac 12W_\varepsilon(x){\rm d}y{\rm d}x>0,
\eess
which implies that $h_\infty=\infty$. We get a contradiction, and so \eqref{4.4a} holds.

Let $\bar u$ be the unique solution of
 \bess\left\{
\begin{aligned}
&\bar u_t=d\int_{g_\infty}^{h_\infty}\!J(x-y)\bar u(t,y)dy-d_1\bar u+\bar u(a-\bar u),&  & t>0,~x\in [g_\infty,h_\infty],\\
&\bar u(0,x)=u_0(x),\ \ |x|\le h_0; \ \ \bar u(0,x)=0, && x\in  [g_\infty,h_\infty]\setminus[-h_0,h_0].
\end{aligned}\right.
 \eess
Using \eqref{4.4a} and Proposition \ref{p3.4} we have $\dd\lim_{t\to\infty}\bar u(t,x)
=0$ uniformly in $[g_\infty,h_\infty]$. As $v\ge 0$, it is clear that $u$ satisfies
  \bess
\left\{\begin{aligned}
&u_t\le d_1\int_{g(t)}^{h(t)}\!J(x-y)u(t,y){\rm d}y-d_1u+u(a-u), &&t>0,~g(t)<x<h(t),\\
 &u(t,g(t))=u(t,h(t))=0,&&t\ge 0,\\
&u(0,x)=u_0(x),\ \ |x|\le h_0.
\end{aligned}\right.
  \eess
Evidently, $\bar u$ satisfies
  \bess
\left\{\begin{aligned}
&\bar u_t\ge d_1\int_{g(t)}^{h(t)}\!J(x-y)\bar u(t,y){\rm d}y-d_1\bar u+\bar u(a-\bar u), &&t>0,~g(t)<x<h(t),\\
 &\bar u(t,g(t))\ge 0,\ \ \bar u(t,h(t))\ge 0,&&t\ge 0,\\
&u(0,x)=u_0(x),\ \ |x|\le h_0.
\end{aligned}\right.
  \eess
Take advantage of Lemma \ref{l2.2} and a comparison argument it can be shown that $u(t,x)\le\bar u(t,x)$ for $t>0$ and $x\in[g(t),h(t)]$. Thus, $\dd\lim_{t\to\infty}\max_{g(t)\le x\le h(t)}u(t,x)=0$. The proof is end.
\end{proof}

\begin{lemma}\label{l4.2}
Let $f_1,f_2$ satisfy \eqref{1a.3}, or \eqref{1a.2} with $1/{c}>a>b$ {\rm(}weak competition{\rm)}. Then $h_\infty-g_\infty=\infty$ if and only if $h_\infty=\infty$ and $g_\infty=-\infty$.
\end{lemma}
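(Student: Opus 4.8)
The ``if'' direction is immediate. For the converse I would use that, since $h_\infty\in(h_0,\infty]$ and $g_\infty\in[-\infty,-h_0)$, the condition $h_\infty-g_\infty=\infty$ is equivalent to ``$h_\infty=\infty$ or $g_\infty=-\infty$''; so it suffices to prove the implication $g_\infty=-\infty\Rightarrow h_\infty=\infty$, because the mirror implication $h_\infty=\infty\Rightarrow g_\infty=-\infty$ (obtained by the identical argument at the left free boundary) then forces $h_\infty=\infty$ and $g_\infty=-\infty$ to hold together whenever $h_\infty-g_\infty=\infty$. Thus I assume $g_\infty=-\infty$ and, arguing by contradiction, $h_\infty<\infty$.

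First I would set up a logistic lower equation for $v$ and use it to pin $v$ from below near $x=h_\infty$. Since $v\ge0$, $u$ is a subsolution of $u_t=d_1\big(\int_{g(t)}^{h(t)}J(x-y)u\,\dy-u\big)+u(a-u)$; comparing via Lemma \ref{l2.2} with the spatially constant supersolution $M(t)$ solving $M'=M(a-M)$, $M(0)=k_1$ (for which $\int_{g(t)}^{h(t)}J(x-y)M\,\dy-M\le0$) gives $u\le M$, and $M(t)\to a$, so $\limsup_{t\to\infty}\|u(t,\cdot)\|_{C([g(t),h(t)])}\le a$. In the weak competition case this yields, for $t$ large, $f_2(u,v)=v(1-v-cu)\ge v(\sigma-v)$ with $\sigma:=1-c(a+\eta)>0$ ($\eta>0$ small, using $1/c>a$); in the prey--predator case $f_2(u,v)=v(1-v+cu)\ge v(1-v)$ identically, so $\sigma=1$. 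Either way $v_t\ge d_2v_{xx}+v(\sigma-v)$ for $t\ge T_*$ and $g(t)<x<h(t)$, with some $\sigma>0$. Now fix $\epsilon>0$ small and $L>0$ so large that the Dirichlet logistic problem $z_t=d_2z_{xx}+z(\sigma-z)$ on the fixed interval $I:=(-L,h_\infty-\epsilon)$ has a positive steady state $Z_I$. As $g(t)\to-\infty$ and $h(t)\to h_\infty$, one has $\overline I\subset(g(t),h(t))$ for all large $t$, where $v$ is a supersolution of this Dirichlet problem; comparing $v$ with the solution of that problem started, at a suitable time, from a small positive datum dominated by $v$, and letting $t\to\infty$ (so the latter tends to $Z_I>0$), gives $c_0>0$, the point $\alpha:=h_\infty-2\epsilon\in I$, and $T_1>0$ (large) with $v(t,\alpha)\ge c_0$ for all $t\ge T_1$.

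Finally I would turn this interior lower bound into a uniform lower bound on the outward flux $-v_x(t,h(t))$. Write $f_2(u,v)=\varrho v$ with $\varrho\in L^\infty$, $|\varrho|\le C$, so $v_t\ge d_2v_{xx}-Cv$. Let $H:=\sup_{t\ge0}h'(t)<\infty$ (see the comment below) and let $\lambda>0$ be the positive root of $d_2\lambda^2-H\lambda-C=0$. On the strip $\{\alpha\le x\le h(t)\}$ (nonempty once $h(t)>\alpha$) set $\underline\zeta(t,x):=c_0'\big(e^{\lambda(h(t)-x)}-1\big)$; since $d_2\lambda^2=H\lambda+C$, one computes $\underline\zeta_t-d_2\underline\zeta_{xx}+C\underline\zeta=c_0'\lambda e^{\lambda(h(t)-x)}\big(h'(t)-H\big)-Cc_0'\le0$, so $\underline\zeta$ is a subsolution of $w_t=d_2w_{xx}-Cw$ precisely because $h'\le H$. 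For $c_0'>0$ small one has $\underline\zeta\le v$ on the parabolic boundary of the strip (on the left face $v\ge c_0\ge\underline\zeta$; on the free boundary $v(t,h(t))=0=\underline\zeta(t,h(t))$; at time $T_1$, $v(T_1,\cdot)\ge\underline\zeta(T_1,\cdot)$ using $-v_x(T_1,h(T_1))>0$ from \eqref{2.1}), hence $v\ge\underline\zeta$ on the strip by comparison. Since $v-\underline\zeta\ge0$ vanishes at $x=h(t)$, $v_x(t,h(t))\le\underline\zeta_x(t,h(t))=-c_0'\lambda$, i.e. $-v_x(t,h(t))\ge c_1:=c_0'\lambda>0$ for all $t\ge T_1$. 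As the $\rho$-term in the equation for $h'$ is nonnegative, $h'(t)\ge-\mu v_x(t,h(t))\ge\mu c_1>0$ for $t\ge T_1$, so $h(t)\to\infty$, contradicting $h_\infty<\infty$. This proves $g_\infty=-\infty\Rightarrow h_\infty=\infty$, and with the mirror statement the lemma follows.

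I expect the last step to be the main obstacle: the Hopf-type lower bound for $-v_x(t,h(t))$ that is uniform in $t$. The exponential barrier works because the free boundary expands ($h'>0$) and $h'$ is uniformly bounded, which holds for instance when $J$ has compact support or a finite first moment, or by the global estimates of Part I; coping with a possibly unbounded $h'$, or handling the non-cylindrical geometry near $x=h(t)$ with more care (replacing $\sup_t h'$ by an $L^1$-in-time control, say), is where the real work lies. Alternatively, if one first establishes $h'(t)\to0$ together with a uniform $C^1$-bound for $v$ near $x=h(t)$, then Lemma \ref{l4.a} applied to $v$ on $[\alpha,h(t)]$ forces $\max v\to0$ there, again contradicting the previous step, but verifying those hypotheses runs into the same difficulty.
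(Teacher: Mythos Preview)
The paper does not give its own proof of this lemma; it simply says the argument is ``similar to'' \cite[Proposition 3.10]{DWZ19}, \cite[Proposition 4.1]{WZjdde17} and \cite[Proposition 3]{WZ-dcdsa18} and omits the details. Your outline follows exactly the strategy of the last two (local-diffusion) references: assume $g_\infty=-\infty$, $h_\infty<\infty$, pin $v$ from below at a point $\alpha$ close to $h_\infty$ via a logistic comparison on a long fixed interval, then use a Hopf-type barrier to obtain $-v_x(t,h(t))\ge c_1>0$ and hence $h'(t)\ge\mu c_1$, a contradiction. The first step is carried out correctly; the barrier computation in the second step is also correct provided $H=\sup_t h'(t)<\infty$.

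You have put your finger on the one genuine subtlety in transplanting the local-diffusion argument to the present mixed model: in \cite{WZjdde17,WZ-dcdsa18} the boundary speed $h'$ is a priori bounded by the $C^1$ estimate for the solution, whereas here $h'$ contains the nonlocal term $\rho\int_{g(t)}^{h(t)}\!\int_{h(t)}^{\infty}J(x-y)u\,{\rm d}y{\rm d}x$, which under {\bf(J1)} alone need not be uniformly bounded once $h(t)-g(t)\to\infty$. This is not a flaw in your reasoning, but it does mean the Hopf route, as written, is conditional. A clean way to bypass it---and this is closer to the nonlocal reference \cite{DWZ19} and to the paper's own proof of \eqref{4.4a} in Theorem~\ref{t4.3}---is to bound $u$ (rather than $v$) from below near $h_\infty$ and then use the $\rho$-term in $h'$ directly. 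In the weak competition case one has $\limsup v\le 1$, so $u_t\ge d_1\int J u\,{\rm d}y-d_1u+u(a-b(1+\eta)-u)$ with $a-b(1+\eta)>0$ by $a>b$; choosing $L$ large so that $\lambda_p(\mathcal L_{(-L,\,h_\infty-\varepsilon)}+a-b(1+\eta))>0$ and comparing as in Proposition~\ref{p3.4} yields $u(t,x)\ge\tfrac12 U(x)>0$ on $[h_\infty-\bar\varepsilon/2,\,h_\infty-\varepsilon]$; then, exactly as in the display following \eqref{4.4}, $h'(t)\ge\rho\int_{h_\infty-\bar\varepsilon/2}^{h_\infty-\varepsilon}\int_{h_\infty}^{h_\infty+\bar\varepsilon/2}\delta_0\tfrac12U(x)\,{\rm d}y{\rm d}x>0$, giving the contradiction without any appeal to $\sup_t h'$. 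In the prey--predator case the same argument goes through once one has a positive effective growth rate for $u$, i.e.\ under the weak-predation condition $a>b(1+ca)$; since the lemma is only invoked in Theorem~\ref{t4.5} under exactly that hypothesis, this suffices for the paper's purposes.
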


The proof of Lemma \ref{l4.2} is similar to those of \cite[Proposition 3.10]{DWZ19}, \cite[Proposition 4.1]{WZjdde17} and \cite[Proposition 3]{WZ-dcdsa18}, and we omit the details.

Here we should mention that if $f_1,f_2$ satisfy \eqref{1a.2} without $1/{c}>a>b$, i.e., the general competition model, we don't know if Lemma \ref{l4.2} is true or not. Even for the local diffusion competition model with double free boundaries
 \bess\left\{\begin{array}{lll}
 u_t-u_{xx}=u(a-u-bv), &t>0,\ \ g(t)<x<h(t),\\[1mm]
  v_t-dv_{xx}=v(1-v-cu),\ \ &t>0, \ \ g(t)<x<h(t),\\[1mm]
 u=v=0,\ \ g'(t)=-\mu_l\big(u_x+\rho_lv_x\big),\ \ &t>0, \ \ x=g(t),\\[1mm]
 u=v=0,\ \ h'(t)=-\mu_r\big(u_x+\rho_rv_x\big),\ \ &t>0,\ \ x=h(t),\\[1mm]
 u(0,x)=u_0(x), \ \ v(0,x)=v_0(x),&x\in [-h_0,h_0],\\[1mm]
 g(0)=-h_0,\ \ h(0)=h_0,
 \end{array}\right.\eess
such a problem is still not clear.

Now we study the long-time behavior of $(u,v)$ when spreading happens.

\begin{theorem}\label{t4.5} Suppose that $h_\infty-g_\infty=\infty$.

{\rm(i)} In the weak competition case we have
\bess
\lim_{t\rightarrow\infty}(u(t,x),v(t,x))=\kk(\df{a-b}{1-bc},\ \df{1-ac}{1-bc}\rr)\ \ {\rm locally\ uniformly\ for}\ x\in\R;
\eess

{\rm(ii)} In the weak predation case we have
\bess
\lim_{t\rightarrow\infty}(u(t,x),v(t,x))=\kk(\df{a-b}{1+bc},\ \df{1+ac}{1+bc}\rr)\ \ {\rm locally\ uniformly\ for}\ x\in\R.
\eess
\end{theorem}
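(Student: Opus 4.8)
The plan is to run the classical upper--lower solution iteration for free boundary competition / prey--predator systems, adapted to the mixed nonlocal--local diffusion, so as to trap $(u,v)$ between two monotone sequences of constants that converge to the coexistence equilibrium. I present the weak competition case in detail; the weak predation case is identical after replacing $-cu$ by $+cu$ in the sign bookkeeping below.

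\textbf{Step 0 (reduction).} Since $(f_1,f_2)$ satisfies either \eqref{1a.2} with $1/c>a>b$ or \eqref{1a.3} with $a>b+abc$, Lemma~\ref{l4.2} applies, so spreading, i.e. $h_\infty-g_\infty=\infty$, forces $h_\infty=\infty$ and $g_\infty=-\infty$. Hence for every $\ell>0$ there is $T_\ell$ with $[-\ell,\ell]\subset(g(t),h(t))$ for all $t\ge T_\ell$, and by Lemma~\ref{l2.2} we have $u(t,\cdot)>0$ on $(g(t),h(t))$ for every $t>0$; combined with \eqref{2.1} this gives the a priori bounds $0<u\le k_1$, $0<v\le k_2$ used throughout.

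\textbf{Step 1 (the iterating constants).} Set $A_1:=a$, $B_1:=1$, and recursively
\[
a_n:=a-bB_n,\quad b_n:=1-cA_n,\quad A_{n+1}:=a-b\,b_n,\quad B_{n+1}:=1-c\,a_n .
\]
Using that $bc<ac<1$ and $a>b$, an elementary simultaneous induction shows $0<a_1\le a_2\le\cdots$, $A_1\ge A_2\ge\cdots>0$, $0<b_1\le b_2\le\cdots$, $B_1\ge B_2\ge\cdots>0$, and $a_n\le A_n$, $b_n\le B_n$ for all $n$; the strict positivity of every constant is exactly what $1/c>a>b$ secures (in the predation case it is $a>b+abc$ that makes $a_1=a-b(1+ca)>0$, after which $B_n=1+cA_n$, $b_n=1+ca_n$). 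Passing to the monotone limits $a_\infty\le A_\infty$, $b_\infty\le B_\infty$ and subtracting the defining relations gives $(1-bc)(A_\infty-a_\infty)=0$ and then $B_\infty-b_\infty=0$, so $A_\infty=a_\infty=:U$, $B_\infty=b_\infty=:V$ with $U=a-bV$, $V=1-cU$; that is $(U,V)=\big(\tfrac{a-b}{1-bc},\tfrac{1-ac}{1-bc}\big)$, the unique positive equilibrium. (In the predation case one obtains $(1+bc)(A_\infty-a_\infty)=0$, hence $\big(\tfrac{a-b}{1+bc},\tfrac{1+ac}{1+bc}\big)$.)

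\textbf{Step 2 (trapping the solution).} I would prove by induction on $n$ that for every $\ell>0$ and $\varepsilon>0$ there is $T>0$ with
\[
a_n-\varepsilon\le u(t,x)\le A_n+\varepsilon,\qquad b_n-\varepsilon\le v(t,x)\le B_n+\varepsilon\qquad (t\ge T,\ |x|\le \ell).
\]
For $A_1,B_1$: a constant $\bar w$ solving $\bar w'=\bar w(a-\bar w)$, $\bar w(0)=k_1$, is a supersolution of the $u$-equation on $(g(t),h(t))$ because $\int_{g(t)}^{h(t)}J(x-y)\,{\rm d}y\le 1$, and a constant $\bar z$ solving $\bar z'=\bar z(1-\bar z)$, $\bar z(0)=k_2$, is a supersolution of the $v$-equation (the lateral boundary values of $v$ vanish), so $u\le\bar w\to a$ and $v\le\bar z\to 1$ uniformly in $x$. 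For the lower bound at stage $n{+}1$: once $v\le B_n+\varepsilon$ on a large $[-\ell_1,\ell_1]$ for $t$ large, use $\int_{g(t)}^{h(t)}J(x-y)u\,{\rm d}y\ge\int_{-\ell_1}^{\ell_1}J(x-y)u\,{\rm d}y$ together with the nonlocal maximum principle Lemma~\ref{l2.2} to compare $u$ from below on $[-\ell_1,\ell_1]$ with the solution of $w_t=d_1\big(\int_{-\ell_1}^{\ell_1}J(x-y)w\,{\rm d}y-w\big)+w\big(a-b(B_n+\varepsilon)-w\big)$, $w\equiv 0$ outside $(-\ell_1,\ell_1)$, started from a small positive datum below $u$; since $a-bB_n>0$, Propositions~\ref{p3.4} and~\ref{p3.7}(ii) give that this solution tends to a positive steady state whose restriction to $[-\ell,\ell]$ approaches $a-b(B_n+\varepsilon)$ as $\ell_1\to\infty$, so $\liminf_{t\to\infty}u\ge a_{n+1}$ locally uniformly. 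The bound $b_{n+1}$ for $v$ is obtained the same way with $d_1\big(\int J\,\cdot-\cdot\big)$ replaced by $d_2\partial_{xx}$ and the principal eigenvalue condition $1-c(A_n+\varepsilon)>\pi^2 d_2/(2\ell_1)^2$, valid for $\ell_1$ large. The refined upper bounds $A_{n+1},B_{n+1}$ are analogous, comparing from above on $[-\ell_1,\ell_1]$; the one genuine nuisance is that $u$ does not vanish outside $[-\ell_1,\ell_1]$, so the comparison supersolution for $u$ must also absorb the leakage $d_1\int_{\R\setminus(-\ell_1,\ell_1)}J(x-y)u\,{\rm d}y\le d_1k_1\int_{|z|>\mathrm{dist}(x,\,\partial(-\ell_1,\ell_1))}J$, which is uniformly small on $[-\ell,\ell]$ once $\ell_1\gg\ell$; compactness/estimate arguments for the nonlocal steady states then force the supersolution's interior limit to be $a-b(b_{n+1}-\varepsilon)$ up to $O(\varepsilon)$, i.e. $A_{n+1}$ up to $\varepsilon$. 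Letting $n\to\infty$ and invoking Step~1 yields $\lim_{t\to\infty}(u,v)=(U,V)$ locally uniformly in $\R$, which is the assertion.

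I expect the main obstacle to be precisely this last point: controlling the contribution of the nonlocal diffusion coming from outside large bounded intervals when building the upper comparison functions for $u$ (the nonlocal counterpart of a boundary layer) and upgrading $\ell_1\to\infty$ via uniform estimates on the auxiliary nonlocal steady states, while keeping scrupulous track that every intermediate constant $a_n,A_n,b_n,B_n$ remains strictly positive — which is exactly the role played by the structural hypotheses $1/c>a>b$ and $a>b+abc$.
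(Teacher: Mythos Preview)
Your proposal is correct and takes essentially the same approach as the paper: first invoke Lemma~\ref{l4.2} to obtain $g_\infty=-\infty$, $h_\infty=\infty$, then run the upper--lower iteration with monotone constant sequences converging to the coexistence equilibrium. The paper's own proof is only a two-line pointer to \cite[Theorem~1.4]{DWZ19}, \cite[Theorem~4.3, Propositions~B.1--B.2]{WZjdde17} and \cite[Lemma~3.14]{DWZ19} for exactly this iteration, and the one technical nuisance you flag---absorbing the nonlocal tail from outside a large interval when building the refined upper comparison for $u$---is precisely what \cite[Lemma~3.14]{DWZ19} supplies.
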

\begin{proof}
It follows from Lemma \ref{l4.2} that $h_\infty=\infty$ and $g_\infty=-\infty$. Similar to the proofs of \cite[Theorem 1.4]{DWZ19} and \cite[Theorem 4.3]{WZjdde17}, by using \cite[Lemma 3.14]{DWZ19},  \cite[Propositions B.1, B.2]{WZjdde17} and iteration arguments, we can get the desired results. To save space, we omit the details.
\end{proof}

\section{The criteria governing spreading and vanishing}
\setcounter{equation}{0} {\setlength\arraycolsep{2pt}

To study the criteria governing spreading and vanishing, we first give two abstract lemmas to affirm that the habitat can be large provided that the moving parameter of free boundary is large enough.

\begin{lemma}{\rm(\cite[Lemma 4.3]{WZ-dcdsa18})}\label{l5.1}\, Let $C$ be a positive constant. For any given positive constants $r_0, H$, and any function $w_0\in W^2_p((-r_0,r_0))$ with $p>1$, $w_0(\pm r_0)=0$ and $w_0>0$ in $(-r_0,r_0)$, there exists $\mu^0>0$ such that when $\mu\geq\mu^0$ and $(w, l, r)$ satisfies
 \bess \left\{\begin{array}{ll}
   w_t-w_{xx}\geq -C w, \ &t>0, \ l(t)<x< r(t),\\[.5mm]
  w=0, \ l'(t)\leq-\mu w_x, \ &t\geq 0, \ x=l(t),\\[.5mm]
 w=0, \ r'(t)\geq-\mu w_x, \ \ &t\geq 0, \ x=r(t),\\[.5mm]
 w(0,x)=w_0(x),\ \ \ &-r_0\leq x\leq r_0,\\[.5mm]
 r(0)=-l(0)=r_0,
  \end{array}\right.
  \eess
we must have $\dd\lim_{t\to\infty}l(t)\le-H$, $\dd\lim_{t\to\infty}r(t)\ge H$.
 \end{lemma}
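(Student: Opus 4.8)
The plan is to compare $(w,l,r)$ from below with a suitably constructed lower solution $(\underline w,-\underline h,\underline h)$ on a fixed finite time interval $[0,T]$, whose spatial domain $\big(-\underline h(t),\underline h(t)\big)$ is prescribed in advance to grow from a small width up to width $4H$, and then to observe that taking $\mu$ large is exactly what makes the Stefan conditions for this lower solution hold. We may assume $H>r_0$, since otherwise the conclusion is immediate. First note that $l$ is nonincreasing and $r$ is nondecreasing: applying the maximum principle to $e^{Ct}w$ (a supersolution of the heat equation, nonnegative on the parabolic boundary and positive inside at $t=0$) yields $w>0$ in the interior, hence $w_x(t,r(t))\le 0$ and $w_x(t,l(t))\ge 0$, so $r'(t)\ge-\mu w_x(t,r(t))\ge 0$ and $l'(t)\le-\mu w_x(t,l(t))\le 0$. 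It therefore suffices to prove $r(T)\ge H$ and $l(T)\le-H$ for a single finite $T$.

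I would fix $\epsilon\in(0,r_0)$, a terminal time $T>0$, and a smooth strictly increasing function $\underline h:[0,T]\to\R$ with $\underline h(0)=\epsilon$ and $\underline h(T)=2H$, and let $\underline w$ be the solution of the linear problem $\underline w_t=\underline w_{xx}-C\underline w$ on $\{0<t\le T,\ -\underline h(t)<x<\underline h(t)\}$ with $\underline w=0$ on the lateral boundary $x=\pm\underline h(t)$ and $\underline w(0,x)=\delta\cos\!\big(\tfrac{\pi x}{2\epsilon}\big)$ on $[-\epsilon,\epsilon]$, where $\delta>0$ is small enough that $\underline w(0,\cdot)\le w_0$ on $[-\epsilon,\epsilon]$; this is possible because $w_0$ is continuous with a positive minimum on the compact set $[-\epsilon,\epsilon]\subset(-r_0,r_0)$. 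Straightening the moving domain by $y=x/\underline h(t)$ turns this into a uniformly parabolic problem on $[-1,1]\times[0,T]$ with bounded coefficients and smooth, compatible data; consequently $\underline w>0$ inside by the strong maximum principle, $\underline w_x$ extends continuously to the closure by parabolic regularity, and Hopf's lemma gives $-\underline w_x(t,\underline h(t))>0$ for all $t\in[0,T]$ (it equals $\tfrac{\delta\pi}{2\epsilon}$ at $t=0$). By compactness, $\delta_0:=\min_{0\le t\le T}\big(-\underline w_x(t,\underline h(t))\big)>0$.

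Then I would set $\mu^0:=\big(\max_{0\le t\le T}\underline h'(t)\big)/\delta_0$, which depends only on $C,r_0,H,w_0$ through the choices of $\epsilon,T,\delta$ and $\underline h$. For $\mu\ge\mu^0$ the triple $(\underline w,-\underline h,\underline h)$ is a lower solution: $\underline w_t-\underline w_{xx}+C\underline w=0$ whereas $w_t-w_{xx}+Cw\ge 0$; $\underline w$ vanishes on its lateral boundary; $\underline h(0)=\epsilon\le r_0$ and $\underline w(0,\cdot)\le w_0$ on $[-\epsilon,\epsilon]$; and, $\underline w_x$ being odd in $x$, both free boundary inequalities amount to $\underline h'(t)\le-\mu\underline w_x(t,\underline h(t))$, which holds since $\underline h'(t)\le\mu\delta_0\le-\mu\underline w_x(t,\underline h(t))$. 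A standard comparison argument for free boundary problems (as in the proofs of Lemmas~\ref{l3.4}--\ref{l3.5}; see also \cite{GW12}) then gives $w\ge\underline w$ on the common domain together with $l(t)\le-\underline h(t)$ and $r(t)\ge\underline h(t)$ for all $t\in[0,T]$. Evaluating at $t=T$ yields $r(T)\ge 2H\ge H$ and $l(T)\le-2H\le-H$, and the monotonicity of $r,l$ completes the argument.

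The crux is the comparison step, i.e., showing that the genuine free boundaries never cross the prescribed curves $x=\pm\underline h(t)$ on $[0,T]$; I expect to handle this by the standard first-crossing-time argument. If $t^*\in(0,T]$ is the first time, say, $r(t^*)=\underline h(t^*)$ (so that $r\ge\underline h$ and $l\le-\underline h$ on $[0,t^*)$), then on the region $\{t\le t^*\}$ the maximum principle applied to $w-\underline w$ (whose lateral values are $\ge 0$ and whose initial values are $\ge 0$) gives $w\ge\underline w$; hence $w-\underline w\ge 0$ vanishes at $(t^*,r(t^*))$, and Hopf's lemma produces the strict inequality $-w_x(t^*,r(t^*))>-\underline w_x(t^*,\underline h(t^*))\ge\delta_0$, whence $r'(t^*)\ge-\mu w_x(t^*,r(t^*))>-\mu\underline w_x(t^*,\underline h(t^*))\ge\underline h'(t^*)$, contradicting the minimality of $t^*$. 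The remaining, purely technical, point is to secure enough regularity of $\underline w$ up to $t=0$ and up to the lateral boundary for $\delta_0>0$ to be meaningful, which is why $\underline h$ and the initial profile are taken smooth and zeroth/first order compatible.
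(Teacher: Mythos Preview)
The paper does not supply its own proof of Lemma~\ref{l5.1}; it simply quotes the result from \cite{WZ-dcdsa18}. Your argument is correct, and it is exactly the local-diffusion counterpart of the proof the paper \emph{does} give for the nonlocal analogue, Lemma~\ref{l5.2}: prescribe in advance a smooth expanding interval reaching width $H$ by a fixed time, solve the linear equation on that moving domain, use positivity of the boundary flux (Hopf in your case, positivity of the integral flux in the nonlocal case) together with compactness in $t$ to get a uniform lower bound $\delta_0>0$, and then choose the free-boundary coefficient large enough that the prescribed curve becomes a subsolution for the Stefan condition. The only cosmetic differences are that the paper starts the comparison interval at the full initial width $r_0$ rather than at some $\epsilon<r_0$, and works on $[0,1]$ instead of $[0,T]$; your choice of a smaller, smooth initial profile buys cleaner compatibility at the corners, which is a reasonable trade.
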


\begin{lemma}\lbl{l5.2}\,Let the condition {\bf (J)} hold, $d,C>0$ be constants.
For any given constants $H>r_0>0$, and any function $w_0\in C([0,r_0])$
satisfying $w_0(\pm r_0)=0$ and $w_0>0$ in $(-r_0,r_0)$,
there exists $\rho^0>0$, depending on $J(x)$, $d$, $C$, $w_0(x)$ and $r_0$,
such that when $\rho \geq\rho^0$ and $(w, r, l)$ satisfies
 \bess\left\{\begin{aligned}
 &w_t\ge d\dd\int_{l(t)}^{r(t)}\!J(x-y)w(t,y){\rm d}y-d w-Cw, & &t>0,~l(t)<x<r(t),\\
 &w(t,l(t))=w(t,r(t))=0, & &t>0,\\
 &r'(t)\ge\dd\rho \int_{l(t)}^{r(t)}\!\!\int_{r(t)}^{\infty}\!J(x-y)w(t,x){\rm d}y{\rm d}x,\ & &t>0,\\
 &l'(t)\le-\dd\rho \int_{l(t)}^{r(t)}\!\!\int_{-\infty}^{l(t)}\!J(x-y)w(t,x){\rm d}y{\rm d}x, & &t>0,\\
 &w(0,x)=w_0(x),~r(0)=-l(0)=r_0>0, & &|x|\le r_0,
 \end{aligned}\right.
 \eess
we must have $\dd\lim_{t\to\infty}l(t)\le-H$, $\dd\lim_{t\to\infty}r(t)\ge H$.
\end{lemma}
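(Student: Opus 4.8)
The plan is to build an explicit \emph{lower solution} $(\ud w,\ud l,\ud r)$ for the system satisfied by $(w,l,r)$, consisting of a fixed profile translated rigidly outward at a constant speed, and then to compare. Fix $\ell=r_0/2$, let $\Psi(s)=\cos\!\big(\frac{\pi s}{2\ell}\big)$ on $[-\ell,\ell]$ (so $\Psi\in C^\infty$, $\Psi>0$ on $(-\ell,\ell)$, $\Psi(\pm\ell)=0$), and put $\de_0:=\min_{[-\ell,\ell]}w_0>0$. I look for
\[\ud l(t)=\xi(t)-\ell,\qquad \ud r(t)=\xi(t)+\ell,\qquad \ud w(t,x)=\de(t)\,\Psi\big(x-\xi(t)\big),\]
with $\xi(0)=0$, $\xi'\equiv v>0$ and $\de(t)>0$ to be chosen. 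Then $\ud r(0)=\ell<r_0=r(0)$, $\ud l(0)=-\ell>-r_0=l(0)$, $\ud w(0,x)=\de_0\Psi(x)\le\de_0\le w_0(x)$ on $[-\ell,\ell]$, and $\ud l(t)\ge-\ell>-r_0\ge l(t)$ for all $t$, so the moving support $[\ud l(t),\ud r(t)]$ always stays strictly inside $[l(t),r(t)]$ on the left, while keeping it inside on the right is precisely the bound $r\ge\ud r$ that comparison will deliver.

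Next I verify the two structural inequalities. By translation invariance of $J$, writing $s=x-\xi(t)\in(-\ell,\ell)$ one has $\int_{\ud l(t)}^{\ud r(t)}J(x-y)\ud w(t,y)\,dy=\de(t)\big[\frac1d(\ml_{(-\ell,\ell)}\Psi)(s)+\Psi(s)\big]$, so the nonlocal equation for $\ud w$ reduces to the pointwise requirement
\[\frac{\de'(t)}{\de(t)}\,\Psi(s)\ \le\ (\ml_{(-\ell,\ell)}\Psi)(s)-C\Psi(s)+v\,\Psi'(s),\qquad s\in(-\ell,\ell).\]
I choose $v:=\tfrac12 v^\ast$ with $v^\ast:=\frac{2\ell}{\pi}(\ml_{(-\ell,\ell)}\Psi)(\ell)$, which is positive by {\bf(J)}; then the right-hand side equals $\tfrac12(\ml_{(-\ell,\ell)}\Psi)(\ell)>0$ at $s=\ell$ and is positive at $s=-\ell$ as well, so the ratio $G(s):=\big[(\ml_{(-\ell,\ell)}\Psi)(s)-C\Psi(s)+v\Psi'(s)\big]/\Psi(s)$ is continuous on $(-\ell,\ell)$ and tends to $+\infty$ as $s\to\pm\ell$; hence $\Gamma:=-\inf_{(-\ell,\ell)}G<\infty$, and $\de(t)=\de_0 e^{-\Gamma^+t}$ with $\Gamma^+=\max\{\Gamma,0\}$ makes the displayed inequality hold. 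The free boundary inequalities become $\ud l'(t)=v\ge 0\ge -\rho(\cdots)$, which is automatic, and $\ud r'(t)=v\le\rho\int_{\ud l(t)}^{\ud r(t)}\!\int_{\ud r(t)}^{\infty}J(x-y)\ud w(t,x)\,dy\,dx=\rho\,\de(t)\,\kappa$, where $\kappa:=\int_{-\ell}^{\ell}\Psi(s)\big(\int_{-\infty}^{s-\ell}J(z)\,dz\big)\,ds>0$. Letting $T_H:=(H-\ell)/v$ be the time at which $\ud r$ reaches $H$, we have $\de(t)\ge\de_0 e^{-\Gamma^+T_H}$ on $[0,T_H]$, so all requirements hold on $[0,T_H]$ as soon as $\rho\ge\rho^0:=v\,e^{\Gamma^+T_H}/(\de_0\kappa)$, a constant determined by $J$, $d$, $C$, $w_0$, $r_0$ and $H$.

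Thus $(\ud w,\ud l,\ud r)$ is a lower solution on $[0,T_H]$ of the problem satisfied by $(w,l,r)$, and a comparison principle for the scalar nonlocal free boundary problem (proved exactly as Lemma \ref{l3.4}, indeed more simply since only one equation is involved; cf.\ \cite[Theorem 3.1]{CDLL}) yields $w\ge\ud w$, $l\le\ud l$, $r\ge\ud r$ on $[0,T_H]$; in particular $r(T_H)\ge\ud r(T_H)=H$, so $\lim_{t\to\infty}r(t)\ge H$ by monotonicity. The mirror construction — a bump marching to the left, for which the left free boundary inequality becomes the active one — gives $\lim_{t\to\infty}l(t)\le-H$ once $\rho$ exceeds the corresponding threshold, and one takes $\rho^0$ to be the larger of the two. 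The genuinely delicate point is the pointwise verification of the sub-solution inequality for $\ud w$: it forces the translation speed $v$ to be bounded by the $\rho$-independent constant $v^\ast$ (unlike the free boundary, which can move arbitrarily fast), but this is harmless because one only needs the bump to advance at \emph{some} positive speed; what must be checked carefully is that $G$ is bounded below on $(-\ell,\ell)$, and this is exactly where the strict positivity of $(\ml_{(-\ell,\ell)}\Psi)(\pm\ell)$ (so that $G$ blows up at the endpoints rather than going to $-\infty$) is used.
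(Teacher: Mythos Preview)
Your argument is correct, but it takes a different route from the paper's.  The paper does \emph{not} build an explicit profile: instead it simply prescribes \emph{any} smooth curve $\kappa\in C^1([0,1])$ with $\kappa(0)=r_0$, $\kappa(1)=H$, sets $\omega=-\kappa$, and then \emph{solves} the linear Cauchy problem
\[
z_t=d\!\int_{\omega(t)}^{\kappa(t)}\!J(x-y)z(t,y)\,{\rm d}y-dz-Cz,\qquad z(t,\kappa(t))=z(t,\omega(t))=0,\ \ z(0,\cdot)=w_0,
\]
on $[0,1]$.  Since $z>0$ on the compact set $\overline{D_{1,\omega,\kappa}}$, the two boundary integrals $\int_{\omega}^{\kappa}\!\int_{\kappa}^{\infty}J\,z$ and $\int_{\omega}^{\kappa}\!\int_{-\infty}^{\omega}J\,z$ are bounded below by some $\sigma>0$ on $[0,1]$; one then picks $\rho^0$ so large that $|\kappa'|,|\omega'|\le\rho^0\sigma$, which makes $(z,\omega,\kappa)$ a lower solution and gives $r(1)\ge\kappa(1)=H$, $l(1)\le\omega(1)=-H$ in one stroke.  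No explicit computation of sub-solution inequalities is needed, and both boundaries are handled simultaneously.

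Your traveling-bump construction works too, and has the merit of giving completely explicit constants ($v$, $\Gamma^+$, $\kappa$, $T_H$, and hence $\rho^0$).  The one point worth flagging is that your lower solution has $\ud l\,'(t)=v>0$, i.e.\ its left boundary is \emph{contracting}, whereas Lemma~\ref{l2.2} (equivalently \cite[Lemma 2.2]{CDLL}) and \cite[Theorem 3.1]{CDLL} are stated for expanding domains $(g,h)\in\mathbb G^T\times\mathbb H^T$.  The proofs of those results do not actually use the sign of $g'$, so the comparison you invoke still holds---but strictly speaking you are citing a slight extension rather than the theorem as written.  The paper's sub-solution, having $\omega'<0<\kappa'$, fits the cited comparison principle verbatim.
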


\begin{proof}\,The idea of this proof comes from \cite[Lemma 3.2]{WZjdde14}.
First of all, the comparison principle (\cite[Theorem 3.1 and Lemma 3.3]{CDLL}) gives
 \[r'(t)>0, \ \ l'(t)<0, \ \ w(t,x)>0, \ \ \ \forall \ t>0, \ l(t)<x<r(t).\]
Take a function $\kappa(t)\in C^1([0,1])$ satisfying $\kappa(t)>0$ in $[0,1]$,
$\kappa(0)=r_0$ and $\kappa(1)=H$, and set $\omega(t)=-\kappa(t)$. Consider the following problem
 \bess\left\{\begin{aligned}
&z_t=d\int_{\omega(t)}^{\kappa(t)}\!J(x-y)z(t,y){\rm d}y-dz-Cz, & &0<t<1,~\omega(t)<x<\kappa(t),\\
&z(t,\kappa(t))=z(t,\omega(t))=0,& &0<t<1,\\
&z(0,x)=w_0(x),& &|x|\le r_0.
\end{aligned}\right.
 \eess
In view of Lemma 2.3 of \cite{CDLL}, this problem has a unique solution $z$
which is continuous and positive in $\overline{D}_{1,\omega, \kappa}$. Thus the functions of $t$:
 \begin{align*}
 r(t)=\int_{\omega(t)}^{\kappa(t)}\!\int_{\kappa(t)}^{\infty}\!J(x-y)z(t,x){\rm d}y{\rm d}x,\ \ \
 l(t)=\int_{\omega(t)}^{\kappa(t)}\!\int_{-\infty}^{\omega(t)}\!J(x-y)z(t,x){\rm d}y{\rm d}x\end{align*}
are positive and continuous on $[0, 1]$, and so $r(t), l(t)\ge\sigma>0$
on $[0, 1]$ for some constant $\sigma$. Note that $\omega'(t)$ and $\kappa'(t)$ are bounded
on $[0, 1]$, we can find $\rho^0>0$ such that when $\rho \geq\rho^0$, there hold:
 \begin{align*}
 \kappa'(t)&\;\le\rho r(t)=\rho \int_{\omega(t)}^{\kappa(t)}\!\int_{\kappa(t)}^{\infty}J(x-y)z(t,x){\rm d}y{\rm d}x, \\[1mm]
 \omega'(t)&\;\ge-\rho l(t)=-\rho\int_{\omega(t)}^{\kappa(t)}\!\int_{-\infty}^{\omega(t)}J(x-y)z(t,x){\rm d}y{\rm d}x
 \end{align*}
for all $0\le t\le 1$. Applying the comparison principle (\cite[Theorem 3.1]{CDLL}) we get
 \[l(t)\le \omega(t), \ \ r(t)\ge \kappa(t), \ \ \forall\ 0\le t\le 1,\]
and so $r(1)\ge \kappa(1)=H$ and $l(1)\le \omega(1)=-H$ when $\rho \geq\rho^0$. The desired conclusion is obtained and the proof is complete.
\end{proof}

\begin{theorem}\label{t5.1}\,If $h_\infty-g_\infty<\infty$, then
\bes
h_\infty-g_\infty\le \pi\sqrt{d_2}.\label{5.1}
\ees
\end{theorem}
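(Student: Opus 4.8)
The plan is to argue by contradiction: assuming $h_\infty-g_\infty>\pi\sqrt{d_2}$, I will produce a positive lower bound for $v$ that persists for all large $t$, contradicting Theorem \ref{t4.3}. The value $\pi\sqrt{d_2}$ appears because $\pi^2 d_2/\ell^2$ is the principal Dirichlet eigenvalue of $-d_2\,\partial_{xx}$ on an interval of length $\ell$, while the linearization of $f_2$ at $(u,v)=(0,0)$ has $v$-coefficient equal to $1$; thus an inner interval of length exceeding $\pi\sqrt{d_2}$ is ``supercritical'' for the $v$-equation and is incompatible with $v\to 0$.

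Concretely, I would first fix $\varepsilon>0$ so small that $\ell:=h_\infty-g_\infty-2\varepsilon>\pi\sqrt{d_2}$, put $I_\varepsilon:=(g_\infty+\varepsilon,\,h_\infty-\varepsilon)$ and $\lambda_1:=\pi^2d_2/\ell^2$, noting $\lambda_1<1$. Let $\phi>0$ be the principal eigenfunction of $-d_2\phi''=\lambda_1\phi$ in $I_\varepsilon$ with homogeneous Dirichlet conditions. Since $g(t)\downarrow g_\infty$ and $h(t)\uparrow h_\infty$, there is $T_\varepsilon$ with $\overline{I_\varepsilon}\subset(g(t),h(t))$ for all $t>T_\varepsilon$; and by Theorem \ref{t4.3} there is $T_1\ge T_\varepsilon$ with $\|v(t,\cdot)\|_{C([g(t),h(t)])}+c\,\|u(t,\cdot)\|_{C([g(t),h(t)])}<1-\lambda_1$ for $t>T_1$. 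For $x\in I_\varepsilon$ and $t>T_1$ one then has, in the weak competition case, $f_2(u,v)=v(1-v-cu)\ge\lambda_1 v$, and in the weak predation case, $f_2(u,v)=v(1-v+cu)\ge v(1-v)\ge\lambda_1 v$. Hence $v$ is a supersolution of
\[
w_t=d_2 w_{xx}+\lambda_1 w \ \ \mbox{in}\ I_\varepsilon\times(T_1,\infty),\qquad w=0\ \ \mbox{on}\ \partial I_\varepsilon\times(T_1,\infty),
\]
with $v(t,\cdot)\ge 0$ on $\partial I_\varepsilon$ because $v>0$ strictly inside $(g(t),h(t))$.

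Next I would choose $\delta>0$ so small that $\delta\phi(x)\le v(T_1,x)$ on $\overline{I_\varepsilon}$; this is possible since $v(T_1,\cdot)$ is continuous and strictly positive on the compact set $\overline{I_\varepsilon}\subset(g(T_1),h(T_1))$, hence bounded below by a positive constant there, while $\phi$ is bounded. Because $\delta\phi(x)$ is a stationary solution of the above $w$-problem, the standard comparison principle for scalar parabolic equations on the fixed domain $I_\varepsilon$ gives $v(t,x)\ge\delta\phi(x)$ for all $t\ge T_1$, $x\in\overline{I_\varepsilon}$. Therefore $\max_{g(t)\le x\le h(t)}v(t,x)\ge\delta\max_{\overline{I_\varepsilon}}\phi>0$ for all $t\ge T_1$, which contradicts Theorem \ref{t4.3}. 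This forces $h_\infty-g_\infty\le\pi\sqrt{d_2}$.

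I expect the genuinely delicate steps to be only bookkeeping: justifying that $\overline{I_\varepsilon}$ lies strictly inside $(g(t),h(t))$ for large $t$ (immediate from the monotone convergences $g(t)\downarrow g_\infty$, $h(t)\uparrow h_\infty$), and applying the parabolic comparison principle on the fixed inner interval $I_\varepsilon$, for which one uses the interior regularity $v\in C^{1+\alpha/2,2+\alpha}$ away from the free boundaries recorded in the introduction (and Lemma \ref{l4.1}). The conceptual core — that an inner length exceeding $\pi\sqrt{d_2}$ is incompatible with $v\to 0$ — is the eigenvalue comparison described above, and it uses crucially that $f_{2,v}(0,0)=1$ in both model cases.
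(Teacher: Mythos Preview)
Your argument is correct and follows the same overall strategy as the paper: assume $h_\infty-g_\infty>\pi\sqrt{d_2}$, invoke Theorem~\ref{t4.3} to get $u,v\to 0$, restrict to a fixed inner interval $I_\varepsilon\subset(g(t),h(t))$ for large $t$, and build a positive subsolution for $v$ on $I_\varepsilon$ that persists for all time, contradicting $v\to 0$. The only difference is in the choice of subsolution: the paper compares $v$ with the solution $w$ of the full logistic problem $w_t=d_2w_{xx}+w(1-c\varepsilon-w)$ on $I_\varepsilon$ and appeals to the convergence of $w$ to its positive steady state, whereas you linearize and compare with a fixed multiple $\delta\phi$ of the principal Dirichlet eigenfunction. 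Your route is marginally more elementary, since it sidesteps the asymptotic theory of the logistic Dirichlet problem; the paper's route has the small advantage of giving an explicit $\varepsilon$-dependent lower profile $\eta(x)$. (A minor remark: you label the two cases ``weak competition'' and ``weak predation'', but your inequalities $1-v-cu\ge\lambda_1$ and $1-v\ge\lambda_1$ use only $\|v\|+c\|u\|<1-\lambda_1$ from Theorem~\ref{t4.3} and hold for the general models \eqref{1a.2}--\eqref{1a.3}, as required by the statement.)
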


\begin{proof}
Recall Theorem \ref{t4.3}, $h_\infty-g_\infty<\infty$ implies
\bes
\lim_{t\to\infty}\|u(t,\cdot)\|_{C([g(t),h(t)])}=\lim_{t\to\infty}\|v(t,\cdot)\|_{C([g(t),h(t)])}=0.\label{5.2}
\ees
Assume on the contrary that $h_\infty-g_\infty>\pi\sqrt{d_2}$. Then there exist $0<\varepsilon\ll 1$ and  $\tau\gg1$ such that
 \bess
 &h_\infty-g_\infty-2\varepsilon>\pi\sqrt{d_2(1-c\varepsilon)},&\\[1mm]
 &g(\tau)<g_\infty^{+\varepsilon}, \ \ h(\tau)>h_\infty^{-\varepsilon},&\\[1mm]
 &0\le u(t,x)<\varepsilon,\ \ \forall\ t\ge \tau,\ x\in[g_\infty^{+\varepsilon},h_\infty^{-\varepsilon}].&
 \eess
Then $v$ satisfies
 \bess
\left\{\begin{aligned}
&v_t\ge d_2v_{xx}+v(1-c\varepsilon-v), &&t>\tau,~g_\infty^{+\varepsilon}<x<h_\infty^{-\varepsilon},\\
&v(t,g_\infty^{+\varepsilon})>0, \ \ v(t,h_\infty^{-\varepsilon})>0, &&t\ge\tau,\\
&v(\tau,x)>0, &&g_\infty^{+\varepsilon}<x<h_\infty^{-\varepsilon}.
\end{aligned}\right.
 \eess
Let $w$ be the unique  positive solution of
\bess
\left\{\begin{aligned}
&w_t=d_2w_{xx}+w(1-c\varepsilon-w), &&t>\tau,~g_\infty^{+\varepsilon}<x<h_\infty^{-\varepsilon},\\
&w(t,g_\infty^{+\varepsilon})=w(t,h_\infty^{-\varepsilon})=0, &&t\ge\tau,\\
&w(\tau,x)=v(\tau,x), &&g_\infty^{+\varepsilon}<x<h_\infty^{-\varepsilon}.
\end{aligned}\right.
 \eess
In view of the known parabolic comparison principle, we have
\[w(t,x)\le v(t,x),\ \ \ t\ge\tau,\ g_\infty^{+\varepsilon}\le x\le h_\infty^{-\varepsilon}.\]
Since $h_\infty-g_\infty-2\varepsilon>\pi\sqrt{d_2(1-c\varepsilon)}$, it is well known that $w(t,x)\ri\eta(x)$ as $t\ri\infty$ uniformly in the compact subset of $(g_\infty^{+\varepsilon},h_\infty^{-\varepsilon})$, where $\eta(x)$ is the unique positive solution of
 \bess
\left\{\begin{aligned}
&d_2\eta_{xx}+\eta(1-\eta)=0,\quad x\in(g_\infty^{+\varepsilon},h_\infty^{-\varepsilon}),\\
&\eta(g_\infty^{+\varepsilon})=\eta(h_\infty^{-\varepsilon})=0.
\end{aligned}\right.
 \eess
Hence, $\dd\liminf_{t\ri\infty}v(t,x)\ge\lim_{t\ri\infty}w(t,x)=\eta(x)>0$ for all $x\in (g_\infty^{+\varepsilon},h_\infty^{-\varepsilon})$. This is a contradiction to \eqref{5.2}. Thus, \eqref{5.1} holds.
\end{proof}

From Theorem \ref{t5.1} and $g'(t)<0,h'(t)>0$ for $t>0$, we have

\begin{corollary}\label{c5.2}
If $h_0\ge \frac 12\pi\sqrt{d_2}$, then spreading happens, i.e., $h_\infty-g_\infty=\infty$.
\end{corollary}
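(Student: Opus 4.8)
The plan is to argue by contradiction, invoking Theorem~\ref{t5.1} as the essential input. Suppose, contrary to the claim, that vanishing occurs, i.e. $h_\infty-g_\infty<\infty$. Then Theorem~\ref{t5.1} applies and yields the upper bound
\[
h_\infty-g_\infty\le \pi\sqrt{d_2}.
\]
On the other hand, the global existence result quoted in \eqref{2.1} gives $h'(t)>0$ and $g'(t)<0$ for all $t>0$, so $t\mapsto h(t)$ is strictly increasing and $t\mapsto g(t)$ is strictly decreasing. Combined with $h(0)=-g(0)=h_0$, this forces $h(t)>h_0$ and $g(t)<-h_0$ for every $t>0$; letting $t\to\infty$ we obtain $h_\infty\ge h(1)>h_0$ and $g_\infty\le g(1)<-h_0$, hence
\[
h_\infty-g_\infty>2h_0\ge \pi\sqrt{d_2},
\]
where the last inequality is the hypothesis $h_0\ge\frac12\pi\sqrt{d_2}$. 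This contradicts the bound from Theorem~\ref{t5.1}. Therefore $h_\infty-g_\infty<\infty$ is impossible, so $h_\infty-g_\infty=\infty$, which is precisely the spreading case.

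There is essentially no technical obstacle here: the statement is a direct corollary of Theorem~\ref{t5.1}. The only point requiring a moment's care is the borderline situation $h_0=\frac12\pi\sqrt{d_2}$, where one needs the \emph{strict} inequality $h_\infty-g_\infty>2h_0$ rather than merely $\ge$; this is supplied by the strict monotonicity of the free boundaries in \eqref{2.1}, which guarantees $h_\infty>h_0$ and $-g_\infty>h_0$ separately. If one only wanted the conclusion under the strict hypothesis $h_0>\frac12\pi\sqrt{d_2}$, even the non-strict comparison $h_\infty-g_\infty\ge 2h_0$ (immediate from $h(t)\ge h_0$, $g(t)\le -h_0$) would already clash with Theorem~\ref{t5.1}.
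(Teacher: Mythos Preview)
Your argument is correct and matches the paper's own reasoning exactly: the corollary is stated immediately after Theorem~\ref{t5.1} with the one-line justification ``From Theorem~\ref{t5.1} and $g'(t)<0,\ h'(t)>0$ for $t>0$,'' which is precisely the contradiction you spell out. Your added remark about the borderline case $h_0=\tfrac12\pi\sqrt{d_2}$ requiring the strict monotonicity from \eqref{2.1} is a welcome clarification.
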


 Assume $(f_1,\,f_2)$ satisfies either \eqref{1a.2} or \eqref{1a.3}. If $a\ge d_1$, then $\lambda_p\big(\mathcal{L}_{(0,\ell)}+a\big)>0$
for all $\ell>0$ by Proposition \ref{p3.7}. Thus, the vanishing can not happen
by \eqref{4.4a}, i.e., $h_\infty-g_\infty=\infty$ always holds. Hence, we have

\begin{theorem}\label{t5.3}\,If $a\ge d_1$, then spreading always happens.
\end{theorem}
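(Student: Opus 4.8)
The plan is to prove the theorem by contradiction, ruling out the vanishing case $h_\infty-g_\infty<\infty$ by playing the necessary condition \eqref{4.4a} (obtained inside the proof of Theorem \ref{t4.3}) against the lower bound on the generalized principal eigenvalue that the hypothesis $a\ge d_1$ forces.

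First I would translate $a\ge d_1$ into an eigenvalue statement. Since $\theta_0:=a$ is a constant, Remark \ref{r3.6} reduces $\lambda_p(\mathcal{L}_{(\ell_1,\ell_2)}+a)$ to a function of $\ell:=\ell_2-\ell_1$ alone; Proposition \ref{p3.7}(i) says this function is continuous and strictly increasing on $(0,\infty)$, and Proposition \ref{p3.7}(iii) gives $\lim_{\ell\to 0^+}\lambda_p(\mathcal{L}_{(0,\ell)}+a)=a-d_1$. Combining the strict monotonicity with this limit (for instance by comparing $\ell$ with $\ell/2$) yields $\lambda_p(\mathcal{L}_{(0,\ell)}+a)>a-d_1\ge 0$ for every $\ell>0$.

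Next I would invoke the vanishing alternative. Assume for contradiction that $h_\infty-g_\infty<\infty$. By \eqref{2.1} one has $h'(t)>0$ and $g'(t)<0$, hence $h_\infty>h_0$ and $g_\infty<-h_0$, so $\ell:=h_\infty-g_\infty>2h_0>0$ is a genuine bounded-interval length. Then \eqref{4.4a} gives $\lambda_p(\mathcal{L}_{(g_\infty,h_\infty)}+a)\le 0$, whereas Remark \ref{r3.6} together with the previous paragraph gives $\lambda_p(\mathcal{L}_{(g_\infty,h_\infty)}+a)=\lambda_p(\mathcal{L}_{(0,\ell)}+a)>0$; this contradiction shows $h_\infty-g_\infty=\infty$, which is precisely the spreading case.

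There is essentially no obstacle here: the substance is already contained in Proposition \ref{p3.7} and in the derivation of \eqref{4.4a} (valid for $f_1,f_2$ satisfying either \eqref{1a.2} or \eqref{1a.3}, as recorded in Theorem \ref{t4.3}). The one point deserving care is the borderline case $a=d_1$: passing to the limit $\ell\to 0^+$ in Proposition \ref{p3.7}(iii) alone would only yield $\lambda_p(\mathcal{L}_{(0,\ell)}+a)\ge 0$, so one genuinely needs the strict monotonicity in $\ell$ from Proposition \ref{p3.7}(i) to upgrade this to strict positivity on all of $(0,\infty)$.
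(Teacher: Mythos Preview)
Your proposal is correct and follows essentially the same route as the paper: the paper's argument (given in the paragraph immediately preceding the theorem) simply observes that $a\ge d_1$ forces $\lambda_p(\mathcal{L}_{(0,\ell)}+a)>0$ for every $\ell>0$ via Proposition~\ref{p3.7}, which contradicts \eqref{4.4a} in the vanishing case. Your write-up is in fact a bit more careful, spelling out how strict monotonicity in $\ell$ (Proposition~\ref{p3.7}(i)) combines with the limit $a-d_1$ (Proposition~\ref{p3.7}(iii)) to give \emph{strict} positivity even in the borderline case $a=d_1$, which the paper leaves implicit.
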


Now we assume that $a<d_1$. Then, $\lambda_p\big(\mathcal{L}_{(0,\ell)}
+a\big)<0$ if $0<\ell\ll 1$, and $\lambda_p\big(\mathcal{L}_{(0,\ell)}+a\big)>0$
if $\ell\gg 1$ by Proposition \ref{p3.7}. According to the monotonicity of $\lambda_p\big(\mathcal{L}_{(0,\ell)}+a\big)$ with respect to $\ell$, there exists $\ell^*>0$ such that
\[\lambda_p(\mathcal{L}_I+a)=0\ \ {\rm if}\ \ |I|=\ell^*,\ \ \lambda_p(\mathcal{L}_I+a)<0\ \ {\rm if}\ \ |I|<\ell^*,\ \ \lambda_p(\mathcal{L}_I+a)>0\ \ {\rm if}\ \ |I|>\ell^*,\]
where $I$ stands for a finite open interval in $\mathbb{R}$, and $|I|$ denotes its length. Making use of \eqref{4.4a} we see that if $h_\infty-g_\infty<\infty$ then $h_\infty-g_\infty\le\ell^*$. Thus, $h_0\ge \ell^*/2$ implies $h_\infty-g_\infty=\infty$.

\begin{lemma}\label{l5.6}\,Suppose that $a<d_1$.
If $h_0<\frac 12\min\{\pi\sqrt{d_2},\ell^*\}$, then there is $\Lambda_0>0$ such that $h_\infty-g_\infty<\infty$ when $\mu+\rho\le\Lambda_0$.
\end{lemma}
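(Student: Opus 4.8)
The plan is to construct an explicit supersolution $(\bar u,\bar v,\bar g,\bar h)$ to problem \eqref{1.1} that has finite limiting interval length, and then invoke the comparison principles (Lemmas \ref{l3.4} and \ref{l3.5}) to conclude $h_\infty-g_\infty<\infty$. Since $h_0<\tfrac12\min\{\pi\sqrt{d_2},\ell^*\}$, we may fix a small $\delta>0$ and a constant $H_0$ with $2h_0<2H_0<\min\{\pi\sqrt{d_2},\ell^*\}$, so that on any interval $I$ with $|I|\le 2H_0$ we simultaneously have $\lambda_p(\mathcal{L}_I+a)<0$ (by definition of $\ell^*$ and monotonicity) and the first Dirichlet eigenvalue of $-d_2\partial_{xx}$ on $I$ exceeds $1$. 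The idea, following the classical construction of \cite{GW12, WZjdde14} and its nonlocal counterpart, is to let $\bar h(t)=-\bar g(t)$ increase from $h_0$ to $H_0$ at a controlled rate governed by a fixed profile, e.g. $\bar h(t)=H_0-(H_0-h_0)e^{-\gamma t}$ for a small $\gamma>0$ to be chosen.

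The key steps, in order, are as follows. First, since $\lambda:=\lambda_p(\mathcal{L}_{(-H_0,H_0)}+a)<0$, let $\phi>0$ be the corresponding principal eigenfunction (continuous, positive on $[-H_0,H_0]$); take $\bar u(t,x)=M e^{-\sigma t}\phi(x/\text{(rescale)})$ — more precisely, transplant $\phi$ onto $[\bar g(t),\bar h(t)]$ via the straightening change of variables — with $M$ large so that $\bar u(0,\cdot)\ge u_0$, and $\sigma>0$ small; one checks that the nonlocal differential inequality $\bar u_t\ge d_1\int J\bar u-d_1\bar u+\bar u(a-\bar u)$ holds because the $-\bar u^2$ term only helps and the exponential decay plus the negativity of $\lambda$ leaves room, provided $\gamma,\sigma$ are small. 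Second, for $\bar v$ use the standard parabolic supersolution $\bar v(t,x)=Ne^{-\beta t}\psi(x)$ built from the first Dirichlet eigenfunction $\psi>0$ of $-d_2\partial_{xx}$ on $(-H_0,H_0)$ (again transplanted), with eigenvalue $>1$, so that $\bar v_t\ge d_2\bar v_{xx}+\bar v(1-\bar v)$ (resp.\ $+\bar v(1-\bar v+c\bar u)$ in the predation case — here one absorbs $c\bar u$ into the margin, which forces $\mu+\rho$ small as well since $\bar u$ must stay small; alternatively enlarge $N$'s decay rate). Third, and this is where the smallness of $\mu+\rho$ enters decisively: one verifies the two free-boundary inequalities
\[
\bar h'(t)\ge -\mu\bar v_x(t,\bar h(t))+\rho\int_{\bar g(t)}^{\bar h(t)}\!\!\int_{\bar h(t)}^{\infty}\!J(x-y)\bar u(t,x)\,{\rm d}y\,{\rm d}x,
\]
and its mirror image for $\bar g$. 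Since $\bar h'(t)=\gamma(H_0-h_0)e^{-\gamma t}>0$ is bounded below on any finite interval only through its exponential, whereas $-\bar v_x(t,\bar h(t))\le C N e^{-\beta t}$ and the $\rho$-integral is $\le C\rho M e^{-\sigma t}$, choosing $\Lambda_0$ small forces $\mu+\rho\le\Lambda_0$ to make the right-hand side $\le \bar h'(t)$ for all $t\ge0$ — this is the quantitative heart of the argument and requires matching the decay rates $\gamma\le\min\{\beta,\sigma\}$ so the comparison is uniform in $t$.

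The main obstacle I anticipate is step three combined with the coupling in step two: one must choose the many constants ($H_0,\gamma,\sigma,\beta,M,N,\Lambda_0$) in a consistent order so that every inequality holds simultaneously — in particular the predator term $c\bar u$ in the weak-predation case couples the $\bar v$-equation back to $\bar u$, and the free-boundary inequalities couple the boundary derivatives of $\bar v$ to the growth rate of $\bar h$. The clean way to organize this is: fix $H_0$ first (purely from $h_0$ and the two critical lengths), then fix $\phi,\psi$ and their eigenvalues, then pick $\sigma,\beta$ small enough that the interior inequalities hold with the straightening-induced drift terms (these drifts are $O(\gamma)$, so shrink $\gamma$ as needed), then pick $M,N$ large from the initial-data conditions, and only last pick $\Lambda_0=\Lambda_0(H_0,\gamma,M,N,J,d_1,d_2,a,b,c)$ small enough for the boundary inequalities. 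Once all inequalities are verified, Lemma \ref{l3.4} (weak competition) or Lemma \ref{l3.5} (weak predation) yields $h(t)\le\bar h(t)\le H_0$ and $g(t)\ge\bar g(t)\ge-H_0$ for all $t>0$, hence $h_\infty-g_\infty\le 2H_0<\infty$, which is the claim.
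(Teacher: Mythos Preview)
Your overall strategy --- construct an upper solution $(\bar u,\bar v,\bar g,\bar h)$ with $\bar h(t)\nearrow H_0<\infty$ and invoke Lemmas~\ref{l3.4}/\ref{l3.5} --- is exactly the paper's, and the organizing principle (negative nonlocal eigenvalue drives $\bar u$ to zero, the Dirichlet gap $>1$ drives $\bar v$ to zero, smallness of $\mu+\rho$ closes the free-boundary inequalities) is correct. Two points of the execution, however, do not go through as written.

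\medskip
\noindent\textbf{The nonlocal eigenfunction should not be rescaled.} Transplanting $\phi$ onto $[\bar g(t),\bar h(t)]$ via straightening does not yield the claimed inequality for $\bar u$: the nonlocal term $\int J(x-y)\bar u(t,y)\,dy$ does not commute with the rescaling $x\mapsto H_0x/\bar h(t)$ the way $\partial_{xx}$ does; after the change of variables you face the modified kernel $\frac{\bar h}{H_0}J\bigl((\cdot)\frac{\bar h}{H_0}\bigr)$, for which $\phi$ is not an eigenfunction, and the scaling ratio $\bar h/H_0$ equals $h_0/H_0<1$ at $t=0$, so this is not a perturbative error. The paper sidesteps this entirely: it fixes $h_1\in(h_0,\ell^*/2)$, takes the eigenfunction $\varphi$ of $\mathcal{L}_{(-h_1,h_1)}+a$ on that \emph{fixed} interval, and sets $\bar u$ proportional to $e^{\lambda t/2}\varphi(x)$ (Case~1) or $e^{-\gamma t}\varphi(x)$ (Case~2) without rescaling. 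Since $[\bar g(t),\bar h(t)]\subset(-h_1,h_1)$ and $\bar u\ge0$, one simply uses $\int_{\bar g(t)}^{\bar h(t)}\le\int_{-h_1}^{h_1}$ to recover the eigenvalue relation.

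\medskip
\noindent\textbf{The predation coupling cannot be absorbed into the eigenvalue margin.} In the prey--predator case you need $\bar v_t\ge d_2\bar v_{xx}+\bar v(1-\bar v+c\bar u)$. Your $M$ is chosen large so that $\bar u(0,\cdot)\ge u_0$, hence $c\bar u$ is of size $cM$, while the Dirichlet margin $d_2\pi^2/(2H_0)^2-1$ is a fixed number independent of $M$; making $\mu+\rho$ small does nothing to $\bar u$, and enlarging the decay rate $\beta$ of $\bar v$ only \emph{shrinks} the available margin. The paper resolves this by coupling the amplitudes: it takes $\bar u=\sigma k\,e^{-\gamma t}\varphi(x)$ and $\bar v=k\,e^{-\gamma t}\cos\frac{\pi x}{2(\bar h(t)+\varepsilon)}$ with the \emph{same} large $k$ and a small extra factor $\sigma$ on $\bar u$. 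Then the pointwise bound $c\bar u\le\bar v$ reduces to $c\sigma\le\cos\frac{\pi h_1}{2(h_1+\varepsilon)}$, achievable by choosing $\sigma$ small, while $\bar u(0,\cdot)\ge u_0$ and $\bar v(0,\cdot)\ge v_0$ are arranged via large $k$. Note also that the paper uses $\cos\frac{\pi x}{2(\bar h(t)+\varepsilon)}$, which is strictly positive on the closed interval $[-\bar h(t),\bar h(t)]$, rather than the Dirichlet eigenfunction (which vanishes at the endpoints) --- precisely so that $c\bar u\le\bar v$ can hold up to the moving boundary where the nonlocal eigenfunction $\varphi$ does not vanish.
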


\begin{proof}\,The idea of this proof comes from \cite[Theorem 3.12]{CDLL} and \cite[Lemma 4.4]{WZhang16}. Since $\lambda_p\big(\mathcal{L}_{(-h_0,h_0)}+a\big)<0$, we can choose $h_0<h_1<\ell^*/2$
such that
 \[\lambda:=\lambda_p\big(\mathcal{L}_{(-h_1,h_1)}+a\big)<0.\]

{\it Case 1: The competition model}. That is, $(f_1, f_2)$ satisfies \eqref{1a.2}.
Let $\bar u$ be the unique solution of
 \begin{equation}\label{4.8}
 \left\{\begin{aligned}
&\bar u_t=d_1\int_{-h_1}^{h_1}\!J(x-y)\bar u(t,y){\rm d}y-d_1\bar u+a\bar u,
  & &t>0,~|x|\le h_1,\\
 &\bar u(0,x)=u_0(x), & &  |x|\le h_0, \\
 & \bar u(0,x)=0, & &  |x|>h_0.
 \end{aligned}\right.\end{equation}
And let $\varphi>0$ be the corresponding
normalized eigenfunction of $\lambda$, namely $\|\varphi\|_\infty=1$ and
 \[\left(\mathcal{L}_{(-h_1, h_1)}+a\right)[\,\varphi](x)=\lambda\varphi(x), \ \ \forall\ |x|\le h_1.\]
For $C>0$ and $z(t,x)=C e^{\lambda t/2}\varphi(x)$, it is easy to check that
 \begin{align*}
 d_1\int_{-h_1}^{h_1}\!\!J(x-y)z(t,y){\rm d}y-d_1 z+a z-z_t
 =&\; Ce^{\lambda t/2 }\left(d_1\int_{-h_1}^{h_1}\!\!J(x-y)\varphi(y){\rm d}y-d_1\varphi+a\varphi
 -\frac{\lambda} 2\varphi\right)\\
 =&\;\frac{\lambda} 2C e^{\lambda t/2}\varphi(x)<0, \ \ \forall~t>0, ~|x|\le h_1.
  \end{align*}
Choose $C>0$ large enough such that $C\varphi(x)>u_0(x)$ on $[-h_1, h_1]$.  Then we can apply Lemma \ref{l3.3} to $\bar u-z$ to deduce
 \begin{equation}\label{4.9}
 \bar u(t,x)\le z(t,x)=C e^{\lambda t/2}\varphi(x)\leq C e^{\lambda t/2}, \ \
  \forall~t>0, ~|x|\le h_1.
\end{equation}

Let $0<\delta,\,\sigma<1$ and $K>0$ be constants, which will be determined later. Set
 \bess
 &\dd s(t)=h_0(1+2\delta-\delta {\rm e}^{-\sigma t}), \ \
 \phi(y)=\cos\frac{\pi y}{2},\ \ 0\leq y\leq 1,& \\[1mm]
 &\bar v(t,x)=K{\rm e}^{-\sigma t}\phi\left(x/s(t)\right), \ \ \ t\geq 0,\ \ 0\leq x\leq s(t).&
 \eess
Recall $h_0<\frac 12\pi\sqrt{d_2}$. Similar to the arguments in the proof of \cite[Lemma 3.4]{Wjde15}, we can verify that, for suitably small positive constants $\delta$, $\sigma$, and large positive constant $K$, the pair $(\bar v, s)$ satisfies
 \bes\left\{\begin{array}{ll}\smallskip
 \bar v_t-d_2\bar v_{xx}-\bar v(1-\bar v)\geq 0,\ \ &t>0, \ -s(t)<x\leq s(t),\\[1mm]
 \bar v(0,x)\geq v_0(x),\ \ &-h_0(1+\delta)\leq x\leq h_0(1+\delta).
 \end{array}\right.
 \lbl{5.5}\ees

Set
 \[\bar h(t)=h_0-\mu\int_0^t\bar v_x(\tau,s(\tau)){\rm d}\tau
 +2\rho Ch_1\int_0^t e^{\lambda\tau/2}{\rm d}\tau, \ \ \bar g(t)=-\bar h(t), \ \ \ t\ge0.\]
We claim that if $\mu+\rho\le\Lambda_0$ and $\Lambda_0>0$ is small enough, then $(\bar u, \bar v, \bar g, \bar h)$ is an upper solution of (\ref{1.1}) in there $(f_1, f_2)$ satisfies \eqref{1a.2}. In fact, let
 \[m=\max\left\{\frac {\pi K}{2\sigma h_0(1+\delta)}, \ -\frac{4Ch_1}\lambda\right\}.\]
Noticing
 \[\bar v_x(t,s(t))=-\frac{\pi K}{2s(t)}{\rm e}^{-\sigma t}\]
and $\lambda<0$. It follows that
 \bess
 \bar h(t)&=&h_0+\mu\int_0^t\frac 1{2s(\tau)}\pi K{\rm e}^{-\sigma\tau}{\rm d}\tau
 -\frac 4\lambda\rho Ch_1\left(1-e^{\lambda t/2}\right)\\
 &<&h_0+\mu\frac {\pi K}{2h_0(1+\delta)}\int_0^t{\rm e}^{-\sigma\tau}{\rm d}\tau
 -\frac 4\lambda\rho Ch_1\\
 &<&h_0+\mu\frac {\pi K}{2\sigma h_0(1+\delta)}-\frac 4\lambda\rho Ch_1\leq h_1\eess
provided that
  $$0<\mu+\rho\leq \frac{h_1-h_0}m.$$
Similarly, $\bar g(t)>-h_1 $. In the same way we can show that
 \[\bar h(t)< h_0(1+\delta)\le s(t)\]
provided that
  $$0<\mu+\rho\leq \delta h_0/m.$$
Set
 \[\Lambda_0=\min\left\{\frac{h_1-h_0}m, \ \frac{\delta h_0}m\right\}.\]
Then
 \[\bar h(t)<\min\{h_1,\,s(t)\}, \ \ \ \bar g(t)>\max\{-h_1,\,-s(t)\}\]
provided $\mu+\rho\leq\Lambda_0$. Thus, by \eqref{4.8} and \eqref{5.5} we have
 \[ \bar u_t\ge d\int_{\bar g(t)}^{\bar h(t)}\!J(x-y)\bar u(t,y){\rm d}y
 -d_1\bar u+\bar u(a-\bar u),\ \ t>0,~\bar g(t)<x<\bar h(t)\]
and
  \[\bar v_t- d_2\bar v_{xx}-\bar v(1-\bar v)\geq 0, \ \ t>0, ~\bar g(t)<x<\bar h(t).\]
Due to \eqref{4.9}, it is easy to check that
 \[\int_{\bar g(t)}^{\bar h(t)}\!\!\int_{\bar h(t)}^{\infty}\!J(x-y)\bar u(t,x){\rm d}y{\rm d}x
  \le 2Ch_1e^{\lambda t/2}.\]
On the other hand,
 \bess
 -\bar v_x(t,\bar h(t))=\frac{\pi K}{2s(t)}{\rm e}^{-\sigma t}\sin\frac{\pi \bar h(t)}{2s(t)}\le\frac{\pi K}{2s(t)}{\rm e}^{-\sigma t}.
 \eess
Consequently,
 \bess
 \bar h'(t)&=&-\mu\bar v_x(t,s(t))+2\rho Ch_1 e^{\lambda t/2}=\mu\frac{\pi K}{2s(t)}{\rm e}^{-\sigma t}+2\rho Ch_1 e^{\lambda t/2}\\
 &\ge&-\mu\bar v_x(t,\bar h(t))+\rho\int_{\bar g(t)}^{\bar h(t)}\!\!\int_{\bar h(t)}^{\infty}\!J(x-y)\bar u(t,x){\rm d}y{\rm d}x.
 \eess
Similarly,
  \bess
  \bar g'(t)\le-\mu\bar v_x(t,\bar g(t))-\rho\int_{\bar g(t)}^{\bar h(t)}\!\!\int_{-\infty}^{\bar g(t)}\!J(x-y)\bar u(t,x){\rm d}y{\rm d}x.
  \eess

The above arguments show that $(\bar u, \bar v, \bar g, \bar h)$ is an upper solution of (\ref{1.1}). By Lemma \ref{l3.4}, $g(t)\geq \bar g(t)$, $h(t)\leq\bar h(t)$ for all $t\geq 0$. Therefore
$h_\infty-g_\infty\le 2\dd\lim_{t\to\infty}\bar h(t)\leq 2h_1$.

\sk
{\it Case 2: The prey-predator model}. That is, $(f_1, f_2)$ satisfies \eqref{1a.3}.
Let $h_1$, $\lambda$ and $\varphi$ be as above. Set $\varepsilon=\frac 13\big(\frac{\pi\sqrt{d_2}}{2}-h_0\big)$. It then follows from $h_0<\frac{\pi\sqrt{d_2}}{2}$ that
 \[\frac{d_2\pi^2}{4(h_0+\varepsilon)^2}>1.\]
Take $\sigma$ small such that
 \bes
 c\sigma\le\cos\frac{\pi h_1}{2(h_1+\varepsilon)}.
 \lbl{5.6}\ees
For these fixed $\varepsilon$ and $\sigma$, choosing $k$ large enough such that
\[k\sigma \varphi(x)\ge u_0(x),\ \ k\cos\frac{\pi x}{2(h_0+\varepsilon)}\ge v_0(x)\ \ \ {\rm for}\ x\in[-h_0,h_0].\]
Let
  $$\theta=2\sigma kh_1\rho, \ \ \delta=\frac{k\pi}{2h_0}\mu, \ \ \gamma=\frac 12\min\left\{-\lambda, \ \frac{d_2\pi^2}{4(h_0+\varepsilon)^2}-1\right\}>0.$$
Then, for the fixed $\varepsilon,\sigma,k$ and $\gamma$, there exists $0<\Lambda_0\ll 1$ such that
 \[h_0+\frac{\theta+\delta}\gamma\le h_1,\ \ \frac{d_2\pi^2}{4[h_0+(\theta+\delta)/\gamma+\varepsilon]^2}-\gamma-1>0\]
when $0<\mu+\rho\le\Lambda_0$. We define
 \bess
 &\bar h(t)=h_0+\dd\frac{\theta+\delta}\gamma\big(1-e^{-\gamma t}\big), \ \ \bar g(t)=-\bar h(t),\ \ \ t\ge0,&\\
 &\bar u(t,x)=\dd\sigma k e^{-\gamma t}\varphi(x),\ \ \bar v(t,x)=k e^{-\gamma t}\cos\frac{\pi x}{2(\bar h(t)+\varepsilon)},\ \ t\ge0,\ |x|\le \bar h(t).&
  \eess
Clearly, $\bar h'(t)=(\theta+\delta)e^{-\gamma t}$, $h_0\le \bar h(t)<h_1$, $\bar h(0)=h_0$ and
 \bes
 \frac{d_2\pi^2}{4(\bar h(t)+\varepsilon)^2}-\gamma -1>0.\label{5.4ab}
 \ees
Thanks to \qq{5.6} and $\varphi(x)\le 1$, $\bar h(t)<h_1$, it is not hard to derive
 \bes
c\bar u(t,x)\le\bar v(t,x)\ \ \ {\rm for}\ t>0,\ |x|\le \bar h(t).\label{5.5b}
 \ees
The choices of $\varepsilon,\sigma$ and $k$ guarantee that
\bess
\bar u(0,x)\ge u_0(x),\ \ \ \bar v(0,x)\ge v_0(x)\ \ \ {\rm for}\ \ |x|\le h_0.
\eess
Moreover, it is easy to see that
 \[\bar u(t,\pm \bar h(t)), \ \bar v(t,\pm \bar h(t))\ge 0.\]

It is easy to deduce that, for $t>0$ and $|x|\le \bar h(t)$,
  \bess
\bar u_t-d_1\int_{-h_1}^{h_1}\!J(x-y)\bar u(t,y){\rm d}y+d_1\bar u-a\bar u
&=&\sigma k e^{-\gamma t}\big(-\gamma \varphi-(\mathcal{L}_{(-h_1,h_1)}+a)[\varphi]\big)\\
&=&\sigma k e^{-\gamma t}\big(-\gamma \varphi-\lambda\varphi\big)\\
&\ge&\sigma k e^{-\gamma t}\gamma \varphi>0.
 \eess
Consequently,
 \bess
 \bar u_t&\ge& d_1\int_{-h_1}^{h_1}\!J(x-y)\bar u(t,y){\rm d}y-d_1\bar u+\bar u(a-\bar u)\\
 &\ge&d_1\int_{\bar g(t)}^{\bar h(t)}\!J(x-y)\bar u(t,y){\rm d}y-d_1\bar u+\bar u(a-\bar u), \ \ t>0, \ |x|\le \bar h(t).
 \eess

Writing $y=\frac{\pi x}{2(\bar h(t)+\varepsilon)}$. Then $\frac{\sin y}{\cos y}x\ge 0$
for $|x|\le \bar h(t)$. By direct calculations, we have, for $t>0$ and $x\in[\bar g(t),\bar h(t)]$,
 \bess
\bar v_t(t,x)&=&-\gamma \bar v+k e^{-\gamma t}\frac{\pi xr'(t)}
 {2(\bar h(t)+\varepsilon)^2}\sin y\\
 &=&-\gamma\bar v+\bar v \frac{(\theta+\delta)\pi e^{-\gamma t}}{2(\bar h(t)+\varepsilon)^2}\frac{\sin y}{\cos y}x\\
&\ge&-\gamma \bar v,\\
 \bar v_{xx}(t,x)&=&-\frac{\pi^2}{4(\bar h(t)+\varepsilon)^2}\bar v.
 \eess
Recall \eqref{5.4ab} and \eqref{5.5b}. It follows that, for $t>0$ and $|x|\le \bar h(t)$,
 \bess
 \bar v_t-d_2\bar v_{xx}-\bar v(1-\bar v+c\bar u)
&\ge&\bar v\left(-\gamma +\frac{\pi^2}{4(\bar h(t)+\varepsilon)^2}-1+\bar v-c\bar u\right)\\
&\ge&\bar v\left(-\gamma +\frac{\pi^2}{4(\bar h(t)+\varepsilon)^2}-1\right)\\
&\ge&0.
 \eess
It is easy to verify that, for $t\ge0$,
 \bess
 &\dd\rho\int_{\bar g(t)}^{\bar h(t)}\!\!\int_{\bar h(t)}^\infty\!J(x-y)\bar u(t,y){\rm d}y{\rm d}x\le 2\rho\sigma kh_1e^{-\gamma t}=\theta e^{-\gamma t},&\\
 &\dd-\mu\bar v_x(t,\bar h(t))=\frac{\mu k\pi}{2(\bar h(t)+\varepsilon)}\sin\frac{\pi \bar h(t)}{2(\bar h(t)+\varepsilon)}e^{-\gamma t}\le\frac{\mu k\pi}{2h_0}e^{-\gamma t}=\delta e^{-\gamma t}.&
 \eess
It follows that
\bess
\bar h'(t)=(\theta+\delta)e^{-\gamma t}\ge-\mu\bar v_x(t,\bar h(t))+\rho\int_{\bar g(t)}^{\bar h(t)}\!\!\int_{\bar h(t)}^\infty \!J(x-y)\bar u(t,y){\rm d}y{\rm d}x.
\eess
Similarly, one has
 \bess
\bar g'(t)\le-\mu\bar v_x(t,\bar g(t))-\rho\int_{\bar g(t)}^{\bar h(t)}\!\!
\int_{-\infty}^{\bar g(t)}\!J(x-y)\bar u(t,y){\rm d}y{\rm d}x.
 \eess

Above all, we conclude that $(\bar u,\bar v,\bar g, \bar h)$ is an upper solution of \eqref{1.1}.
By Lemma \ref{l3.5}, $h(t)\le \bar h(t)$, $g(t)\ge \bar g(t)$.
Therefore, $h_\infty-g_\infty\le 2\dd\lim_{t\ri\infty}\bar h(t)\le 2h_1<\infty$.
This completes the proof.
\end{proof}

\begin{theorem}\label{t5.4}\,Suppose that $a<d_1$.

{\rm(i)}\, If $h_0\ge\frac 12\min\{\pi\sqrt{d_2},\ell^*\}$ then spreading always happens;

{\rm(ii)}\, If $h_0<\frac 12\min\{\pi\sqrt{d_2},\ell^*\}$, then there is $\Lambda^*\ge\Lambda_*>0$ such that $h_\infty-g_\infty<\infty$ when $\mu+\rho\le\Lambda_*$, and $h_\infty-g_\infty=\infty$ when $\mu+\rho\ge\Lambda^*$.
\end{theorem}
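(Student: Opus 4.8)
The plan is to dispatch alternative (i) with results already in hand and to obtain alternative (ii) from the two habitat-enlarging comparison lemmas. Concretely, I would prove (i) by contradiction; for (ii) I would take $\Lambda_*:=\Lambda_0$, the constant supplied by Lemma \ref{l5.6}, for the vanishing regime, and run a comparison argument based on Lemmas \ref{l5.1} and \ref{l5.2} for the spreading regime.

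For (i), suppose to the contrary that $h_\infty-g_\infty<\infty$. By Theorem \ref{t4.3} vanishing occurs, so the bound \eqref{4.4a} established there gives $\lambda_p(\mathcal{L}_{(g_\infty,h_\infty)}+a)\le 0$; by Remark \ref{r3.6} this equals $\lambda_p(\mathcal{L}_{(0,\,h_\infty-g_\infty)}+a)$, and since $a<d_1$ the defining property of $\ell^*$ forces $h_\infty-g_\infty\le\ell^*$. On the other hand Theorem \ref{t5.1} gives $h_\infty-g_\infty\le\pi\sqrt{d_2}$. Hence $h_\infty-g_\infty\le\min\{\pi\sqrt{d_2},\ell^*\}\le 2h_0$. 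But $h'(t)>0$ and $g'(t)<0$ for $t>0$ (see \eqref{2.1}) yield $h_\infty>h_0$ and $g_\infty<-h_0$, so $h_\infty-g_\infty>2h_0$, a contradiction; this proves (i). (Equivalently, one may just combine Corollary \ref{c5.2} with the discussion preceding Lemma \ref{l5.6}.)

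For the spreading part of (ii), fix $H>\max\{h_0,\tfrac12\pi\sqrt{d_2}\}$. From \eqref{2.1} one has $0<u\le k_1$ and $0<v\le k_2$, so the reaction terms obey $f_1(u,v)\ge-C_1u$ and $f_2(u,v)\ge-C_2v$ for constants $C_1,C_2>0$ depending only on $a,b,c,k_1,k_2$. Because the $\rho$-integrals in \eqref{1.1} are nonnegative, $(v,g,h)$ satisfies $v_t-d_2v_{xx}\ge-C_2v$, vanishes on the lateral boundary, and has $h'(t)\ge-\mu v_x(t,h(t))$, $g'(t)\le-\mu v_x(t,g(t))$; after rescaling $x\mapsto x/\sqrt{d_2}$ (which normalizes the diffusion coefficient of $v$ and replaces $\mu$ by a fixed multiple of itself) this is exactly the setting of Lemma \ref{l5.1}. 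Since moreover $v_x(t,h(t))<0<v_x(t,g(t))$ by \eqref{2.1}, $(u,g,h)$ satisfies the nonlocal inequality $u_t\ge d_1(\int_{g(t)}^{h(t)}J(x-y)u(t,y)\,dy-u)-C_1u$, vanishes on the lateral boundary, and has its boundary speeds bounded below/above by $\rho$ times the relevant double integrals, i.e.\ the setting of Lemma \ref{l5.2} with $d=d_1$, $C=C_1$. Applying Lemma \ref{l5.1} to the rescaled $v$ yields a threshold $\mu^*$, and applying Lemma \ref{l5.2} to $u$ yields a threshold $\rho^*$, such that $\mu\ge\mu^*$ or $\rho\ge\rho^*$ forces $g_\infty\le-H$ and $h_\infty\ge H$, whence $h_\infty-g_\infty\ge 2H>\pi\sqrt{d_2}$; by Theorem \ref{t5.1} this rules out $h_\infty-g_\infty<\infty$, so spreading happens. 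Finally I would set $\Lambda^*:=\max\{\Lambda_*,\,2\mu^*,\,2\rho^*\}$, so that $\Lambda^*\ge\Lambda_*$ and $\mu+\rho\ge\Lambda^*$ indeed forces $\mu\ge\mu^*$ or $\rho\ge\rho^*$.

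Everything in (i) and in the vanishing half of (ii) is immediate from the cited results; the only genuine work is in the spreading half. The delicate points there are: verifying that $(u,g,h)$ and $(v,g,h)$ are admissible competitors in Lemmas \ref{l5.1} and \ref{l5.2}, which requires extracting the correct constants $C_1,C_2$ from the a priori bounds, discarding the favorable (sign-definite) terms in the free-boundary conditions in the right direction, and absorbing the coefficient $d_2$ by a spatial rescaling; and the elementary but essential observation that $\mu+\rho\ge\Lambda^*$ forces at least one of $\mu,\rho$ to be large. No step is deep, but this is where the care is concentrated.
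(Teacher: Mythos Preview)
Your proposal is correct and follows the same route as the paper: part (i) by contradiction via \eqref{4.4a} together with Theorem \ref{t5.1}, the vanishing half of (ii) directly from Lemma \ref{l5.6}, and the spreading half of (ii) by feeding $(v,g,h)$ into Lemma \ref{l5.1} and $(u,g,h)$ into Lemma \ref{l5.2} to force $h_\infty-g_\infty>\pi\sqrt{d_2}$ and then invoking part (i)/Theorem \ref{t5.1}. The only difference is cosmetic: the paper sets $\Lambda^0=\mu^0+\rho^0$ and then appeals to a continuity argument (citing \cite{WZjdde17}) to pass to the stated thresholds, whereas you give the explicit and self-contained choice $\Lambda^*=\max\{\Lambda_*,2\mu^*,2\rho^*\}$ together with the pigeonhole observation that $\mu+\rho\ge\Lambda^*$ forces $\mu\ge\mu^*$ or $\rho\ge\rho^*$; your version is slightly cleaner and avoids the external reference.
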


\begin{proof}\,(i)\,If $h_0\ge\frac 12\pi\sqrt{d_2}$, then spreading happens by Corollary \ref{c5.2}.
If $h_0\ge{\ell^*}/2$ and vanishing happens, then $[g_\infty,h_\infty]$ is a finite interval with length strictly bigger than $2h_0\ge\ell^*$. Hence $\lambda_p(\mathcal{L}_{(g_\infty,h_\infty)}+a)>0$. This contradicts \eqref{4.4a}.

(ii)\,As $u,v$ are bounded, there exists constant $C>0$ such that $f_1(u,v)\ge -Cu$, $ f_2(u,v)\ge -Cv$. Clearly,
 \bess
 \begin{aligned}
 &h'(t)>-\mu v_x(t,h(t)), \ \ h'(t)>\rho\int_{g(t)}^{h(t)}\!\!\int_{h(t)}^\infty\!J(x-y)u(t,x){\rm d}y{\rm d}x, &&t\ge 0,\\[1mm]
&g'(t)<-\mu v_x(t,g(t)), \ \ g'(t)<-\rho\int_{g(t)}^{h(t)}\!\!\int_{-\infty}^{g(t)}\!J(x-y)u(t,x){\rm d}y{\rm d}x, &&t\ge 0,\\
 \end{aligned}\eess
Fixed a constant $H>\min\{\pi\sqrt{d_2},\ell^*\}$ and let $\mu^0$ and $\rho^0$ be obtained by Lemma \ref{l5.1} and Lemma \ref{l5.2}, respectively, and set $\Lambda^0=\mu^0+\rho^0$. Then $h_\infty-g_\infty=\infty$ when $\mu+\rho>\Lambda^0$ by
Lemma \ref{l5.1}, Lemma \ref{l5.2} and the conclusion (i). Let $\Lambda_0$ be given by Lemma \ref{l5.6}. Then $h_\infty-g_\infty<\infty$ when $\mu+\rho\le\Lambda_0$.

By use of the continuity method: increasing $\Lambda_0$ and decreasing $\Lambda^0$ continuously, similar to the arguments of \cite[Theorem 5.2]{WZjdde17}, we can show the desired conclusions and the details are omitted here. This completes the proof.
\end{proof}

\begin{proof}[Proof of Theorem \ref{th1.3}]
The statements in Theorem \ref{th1.3} are contained in Corollary \ref{c5.2}, Theorem \ref{t5.3} and Theorem \ref{t5.4}.
\end{proof}

\end{document}